\newtheorem{theorem}{Theorem}
\newtheorem{proposition}{Proposition}                     
\newtheorem{proof}{Proof}                                                                  
\newtheorem{definition}{Definition}
\definecolor{rouge}{rgb}{0.7294,0.0392,0.0392}
\begin{document}

\title{
How Joint Fractal Features Estimation and Texture Segmentation can be cast into a Strongly Convex Optimization Problem ?
}

\author{Barbara Pascal, Nelly Pustelnik and 
        Patrice Abry
\thanks{B. Pascal, N. Pustelnik and P. Abry are with the Universit\'{e} de Lyon, ENS de Lyon, CNRS,  Laboratoire de Physique, 
Lyon, France,  e-mail: { \footnotesize \tt firstname.lastname@ens-lyon.fr}}
\thanks{This work was supported by Defi Imag'in SIROCCO and by ANR-16-CE33-0020
MultiFracs, France.}}

\maketitle

\section{Introduction}
\label{sec:introduction}

\noindent {\bf Context: unsupervised texture segmentation.}  
Textured images appear as \emph{natural} models for a large variety of real-world applications very different in nature.
Often, fractal attributes are used to relevantly characterize such real-world textures.
This is the case with
biologic tissues \cite{mccann2014images}, tomography-based pathology diagnostic \cite{ibrahim2017identification, marin_mammographic_2017}, art painting expertise~\cite{Wendt2013a}, microfluidics~\cite{pascal2018joint}, to name a few examples.

Often in these applications, texture segmentation (i.e., partitioning images into regions with homogeneous features) remains an on-going and major challenge. 
In computer vision or scene analysis, there exist numerous well-established and efficient methods to partition images mostly relying on their geometrical properties (cf. e.g., \cite{ray1999determination,getreuer2012chan,mumford1989optimal,jung-ssvm-11}). 
For textured images, segmentation remains challenging as geometry is more difficult to capture, relying essentially on the statistics of the texture features.

Most classical texture segmentation approaches rely on two-step procedures, consisting of first computing a priori chosen texture features and of, second, grouping features into regions with homogeneous features statistics.
Attempts to improve these traditional approaches address their tow main limitations: arbitrary a priori feature selection and suboptimality of the two-step procedure. 
For instance, deep learning and neural network approaches have strongly contributed to renew image segmentation, avoiding notably the a priori selection of specific features, and combining together feature estimation and segmentation (cf. e.g., \cite{andrearczyk2016using}). 
However, their use remains designed mostly for the specific context of supervised segmentation: 
A (usually large) training dataset needs to be available, together with large computer/memory powers. 
Often, such databases are not available as expert annotations may be too expensive (in time and money) to implement or technically beyond reach. 
Besides 
technical 
and financial issues, assembling such databases may allow raise non trivial ethical problems (cf. e.g.,~\cite{cimpoi2016deep}).
Along that line, a severe drawback of neural network approaches lies in the lack of \emph{interpretability} or even of \emph{identification} of the features decisions are based on: Doctors for instance might legitimately remain reluctant to base diagnostic on non-identified features. 
Therefore, despite the potential of deep learning, in contexts of absence of documented database, of importance of accuracy in estimated boundaries, and of a need of understandability, unsupervised segmentation strategies remain of critical importance. 

The present work is thus dedicated to the unsupervised segmentation of textures into regions well-characterized by piecewise constant fractal features (local regularity and local variance) formulated as a nonsmooth strongly convex optimization problem, which can be solved with fast and scalable algorithms. \\

\noindent {\bf Related work.} 
Amongst classical features commonly enrolled in texture segmentation, one can list e.g., Gabor or short-term Fourier transform coefficients~\cite{jain1991unsupervised, dunn1994texture}, fractal dimension~\cite{chaudhuri1995texture}, Amplitude/Frequency Modulation models~\cite{kokkinos2009texture}.
More recently, fractal (or scale-free) features were also involved in texture segmentation (cf. e.g., \cite{Wendt2009b}). 
Notably, \emph{local regularity} was used to quantify the fluctuations of roughness along the texture. 
Local regularity is quantified as an \emph{optimal}~local power-law behavior across scales for some multiscale quantities \cite{Wendt2009b}.
Modulus of wavelet coefficients \cite{Mallat_S_1997_book_wav_tsp} were initially used as multiscale quantities, followed by continuous wavelet transform modulus maxima \cite{Mallat_S_1992_j-ieee-pami_cha_sme,Muzy_J_1991_j-prl_wav_mfs}. 
More recently, \textit{wavelet leaders} (local suprema of discrete wavelet coefficients), used here, 
were shown to permit a theoretically accurate and practically robust estimation of local regularity \cite{Wendt2009b},
and successfully involved in texture characterization in e.g., \cite{Wendt2013a,nelson2016semi,marin_mammographic_2017}. 
Wavelet leaders are used to estimate local regularity at each pixel mostly by linear regressions in log-log coordinates.

For the segmentation step, local Gabor coefficient histograms were grouped using matrix factorization \cite{yuan_factorization-based_2015},~;  
 textons were combined with brightness and color features to yield a multiscale contour detection procedure \cite{arbelaez2011contour}.  
Further, with fractal features, pixels sharing similar estimates of local regularity are
grouped together via a functional minimization strategy \cite{pustelnik_combining_2016}, using Rudin-Osher-Fatemi (ROF) model~\cite{rudin1992nonlinear} (i.e. \textit{total variation} (TV) denoising). 

Further, in~\cite{pustelnik_combining_2016}, it was also attempted to combine both steps into a single one by incorporating the regression weight estimation into the optimization procedure. 
The high computational burden implied by iteratively solving optimization problems as well as by tuning the regularization parameter has tentatively been addressed by \emph{block splitting approaches} as suggested in~\cite{repetti_parallel_2015} and explored in \cite{pascal_block_2018}. 
Strong convexity \cite{chambolle2011first} constitutes another recently proposed track to address iterative optimization acceleration.
However, the proposed joint features estimation and segmentation procedures, despite showing satisfactory and state-of-the-art segmentation performance suffered from major limitations:
Their high computing cost prevents their use of large images and databases~; While based on a key fractal feature, local regularity, they neglect changes in power, 
a potentially relevant information for texture segmentation, notably to extract accurate region boundaries.\\

\noindent {\bf Goals, contributions and outline.} 
Aiming to address the above limitations, the overall goals of the present contribution are to investigate the potential benefits for texture segmentation brought by
\begin{itemize}
\item the use of joint estimates of fractal features (local variance and local regularity)~;
\item the formulation of feature estimation and segmentation as a single step taking the form of a convex minimization~;
\item the derivation of fast and scalable iterative algorithms to solve the optimization problem, permitted by explicitly proving (and measuring) strong convexity of  the data fidelity term in the objective function~;
\item the comparisons of several optimization formulations, favoring changes in features that are either co-localized in space or independent~;
\item the derivation of an explicit stochastic process, modeling piecewise fractal textures, thus permitting to conduct large-size Monte-Carlo simulation for performance benchmarking and comparisons. 
\end{itemize} 

To that end, key concepts and definitions 
related to local regularity, wavelet leaders and corresponding state-of-the-art linear regression followed by TV-based estimation procedures are recalled in Section~\ref{sec:local_regularity}. 
Two alternative objective functions
for the combined estimation/segmentation of local variance and regularity, respectively referred to as \emph{joint}~and \emph{coupled}, are proposed in Section~\ref{sec:optimization}, based on the same data fidelity term but on two different total-variation based regularization strategies. 
Studying strong convexity and duality gaps, two classes of fast iterative algorithms (\emph{Primal-Dual} and \emph{Dual Forward-backward}), are devised. 
Piecewise homogeneous fractal textures are defined and studied in Section~\ref{sec:synthesis}, thus enabling performance assessment from Monte-Carlo simulations on well-controlled synthetic textures, as reported in Section~\ref{sec:performance}. 
Conducting such simulations requires to address issues related to regularization parameter selection, iterative algorithm stopping criterion, and performance metrics. 
It permits relevant answers to the final aim of assessing the actual benefits of using both local variance and regularity in texture segmentation with respect to issues such as sensitivity to fractal parameter changes, computational costs, impact of the different optimization formulations.  
Finally, the potential of the proposed segmentation approaches is illustrated at work on real-world piecewise homogeneous textures taken from a publicly available and documented texture database.
Performance are also compared against a state-of-the-art segmentation strategy. 

A {\sc matlab} toolbox implementing the analysis and synthesis procedures devised here will be made freely and publicly available at the time of publication.

\section{Local regularity}
\label{sec:local_regularity}
\subsection{Local regularity and H\"older exponent}

Local regularity can classically be assessed by means of H\"older exponent~\cite{jaffard2004wavelet,pLeadersPartII2015}, defined as follows:   
\begin{definition} Let $f\colon \Omega   \to \mathbb{R}$ denote a 2D real field defined on an open set $\Omega \subset \mathbb{R}^2$. 
The H\"older exponent $h(\underline{z}_0)$ at location $\underline{z}_0 \in \Omega $  is defined as the largest $\alpha > 0$ such that there exists a constant $\chi>0$, a polynomial $\mathcal{P}_{\underline{z}_0}$ of degree lower than $\alpha$ and a neighborhood $\mathcal{V}(\underline{z}_0)$ satisfying:  \\
$$\big(\forall \underline{z} \in \mathcal{V}(\underline{z}_0)\big) \quad \left\lvert f(\underline{z}) - \mathcal{P}_{\underline{z}_0}(\underline{z}) \right\rvert \leq \chi \left\lVert \underline{z} - \underline{z}_0 \right\rVert^{\alpha}$$
 where $\lVert \cdot \rVert$ denotes the Euclidian norm.
\label{def:holder}
\end{definition}
Definition~\ref{def:holder} does not provide a practical way to estimate local regularity. 
Thus, the practical assessment of $h(\underline{z}_0)$ usually relies on the use of multiscale quantities, such as wavelet coefficients, or wavelet leaders~\cite{jaffard2004wavelet,pLeadersPartII2015,Wendt2009b}.

\subsection{Local regularity and wavelet leaders}
\label{sec:wavleaders}

Because the practical aim is to analyse digitized images, definitions are given in a discrete setting. 
Let $X =(X_{\underline{n} })_{\underline{n} \in \Upsilon}\in \mathbb{R}^{\lvert \Upsilon \rvert}$ denote the digitized version of the 2D real field $f$ on a finite grid $\Upsilon =\lbrace1, \hdots, N \rbrace^2 $. 
Let $d_j = \textbf{W}_j X$ denote the discrete wavelet transform (DWT) coefficients of $X$,  at resolution $j\in \{j_1,\ldots,j_2\}$, 
with $ \textbf{W}_j \colon \mathbb{R}^{N \times N}\rightarrow\mathbb{R}^{M_j \times M_j}$ the operator formulation of the DWT.

Further, let the wavelet leader $\mathcal{L}_{j,\underline{k}}$, at scale $2^j$ and location $\underline{n} = 2^{j}\underline{k}$, be defined as the local supremum of modulus of wavelet coefficients in a small neighborhood across all finer scales,~\cite{jaffard2004wavelet,pLeadersPartII2015,Wendt2009b} $\mathcal{L}_{j, \underline{k}} = \underset{
\lambda_{j',\underline{k}'} \subset 3\lambda_{j, \underline{k}}}{\text{sup}} | 2^{j} d_{j',\underline{k}'} |$, with $  \lambda_{j, \underline{k}} = [\underline{k}2^j, (\underline{k}+1)2^j) $ and $3\lambda_{j, \underline{k}} = \underset{\underline{p} \in \lbrace -1, 0, 1 \rbrace^2}{\bigcup} \lambda_{j, \underline{k} + \underline{p}}$.

It was proven in~\cite{jaffard2004wavelet,pLeadersPartII2015,Wendt2009b} that wavelet leaders provide multiscale quantities intrinsically tied to H\"older exponents, insofar as when $X$ has H\"older exponent $h_{\underline{n}} $ at location $\underline{n}$, then the wavelet leaders $\mathcal{L}_{j,\underline{k}}$ satisfy: 
\begin{align}
\label{eq:model}
(\forall j) \, \, \mathcal{L}_{j,\underline{k}} \simeq \eta_{\underline{n}} 2^{j h_{\underline{n}}} \, \, \, \text{as} \, \, 2^j \rightarrow 0, \, \, \, \mathrm{with} \, \, \underline{n} = 2^j\underline{k} \in \lambda_{j,k},
\end{align}
with $\eta_{\underline{n}}$ proportional to local variance of $X$ around pixel $\underline{n}$.

\subsection{Local regularity estimation}
\label{sec:estimation}

\subsubsection{Linear regression} Eq.~\eqref{eq:model} naturally leads to estimate $\boldsymbol{h}$ and $\boldsymbol{v} = \log_2  \boldsymbol{\eta} $ by means of linear regression in log-log coordinates~\cite{jaffard2004wavelet,pLeadersPartII2015,Wendt2009b}, denoted $\left(\widehat{\boldsymbol{v}}_{\mathrm{LR}}, \widehat{\boldsymbol{h}}_{\mathrm{LR}}\right)$: 
\begin{align}
\label{eq:linsol}
(\forall \underline{n}\in \Upsilon)\qquad \begin{pmatrix}
\widehat{v}_{\mathrm{LR},\underline{n}} \\
\widehat{h}_{\mathrm{LR},\underline{n}}
\end{pmatrix} = \textbf{J}^{-1} \begin{pmatrix}
\mathcal{S}_{\underline{n}} \\
\mathcal{T}_{\underline{n}}
\end{pmatrix} 
\end{align}
\begin{equation}
\label{eq:sm}
 \mbox{with} \quad 
 \textbf{J} = \begin{pmatrix}
R_0 & R_1 \\
R_1 & R_2
\end{pmatrix} \quad \mbox{and} \quad R_m = \sum_{j} j^m,
\end{equation}
\begin{equation}
\label{eq:ST}
\makebox{ and} \quad  \mathcal{S}_{\underline{n}} = \sum_j \log_2 \mathcal{L}_{j, \underline{n}}, \quad 
 \mathcal{T}_{\underline{n}} = \sum_j j \log_2 \mathcal{L}_{j,\underline{n}}.
\end{equation}
The sum $\sum_j $ implicitly stand for $\sum_{j=j_1}^{j_2}$ with $(j_1,j_2)$ the range of octaves involved in the estimation.
Though linear regressions provide estimates both for  $\boldsymbol{h}$ and $\log_2 \boldsymbol{\eta}$, the later remains to date rarely used.
Estimates of $\boldsymbol{h}$ were for instance used in~\cite{pustelnik_combining_2016,pascal_block_2018}.
Bayesian based extension of linear regressions were also and alternatively proposed to estimate $\eta$ and $h$~\cite{GaMRF2018}.  

A sample of piecewise homogeneous fractal texture is displayed in Fig.~\ref{subfig:ex_txt_2}, synthesized using the mask  sketched in Fig.~\ref{subfig:mask}. 
The corresponding linear regression based estimates 
$\widehat{\boldsymbol{\eta}}_{\mathrm{LR}} \equiv 2^{\widehat{\boldsymbol{v}}_{\mathrm{LR}}} $,
and  $\widehat{\boldsymbol{h}}_{\mathrm{LR}} $ (Fig.~\ref{fig:reglin}(c-d)) show too poor performance (high variability) to permit to detect the two regions and the corresponding boundary.

\begin{figure}[h!]
\begin{subfigure}{0.32\linewidth}
\centering
\includegraphics[width = 0.8\linewidth]{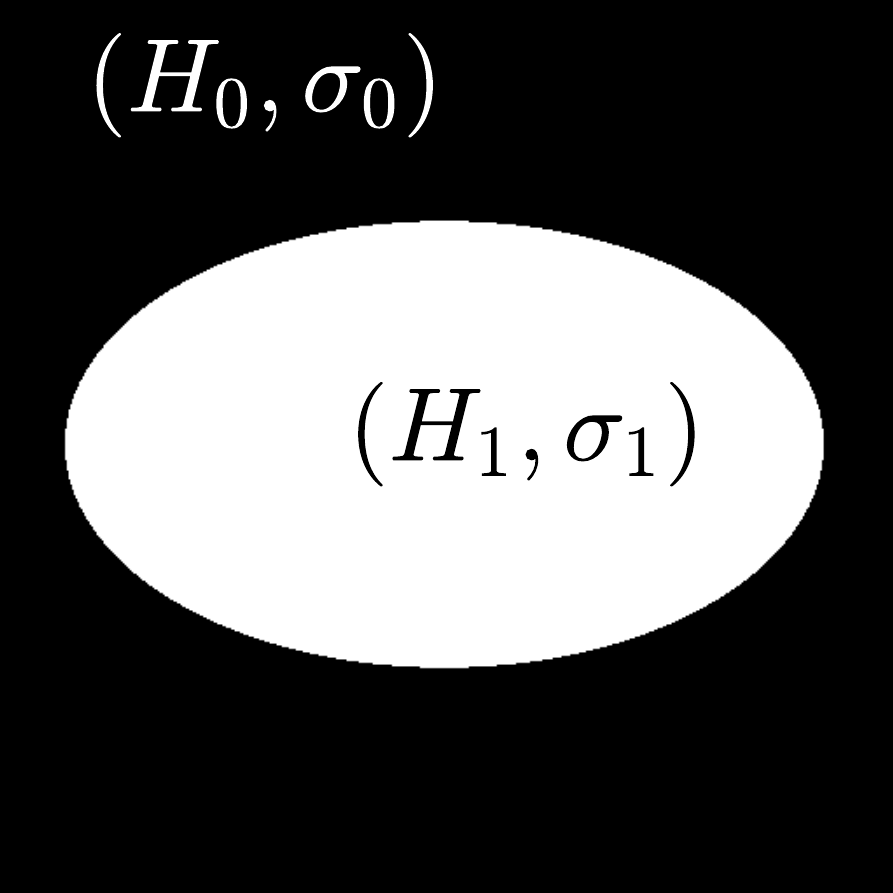}
\subcaption{\label{subfig:mask}Mask}
\end{subfigure} \hfill
\begin{subfigure}{0.32\linewidth}
\centering
\includegraphics[width = 0.8\linewidth]{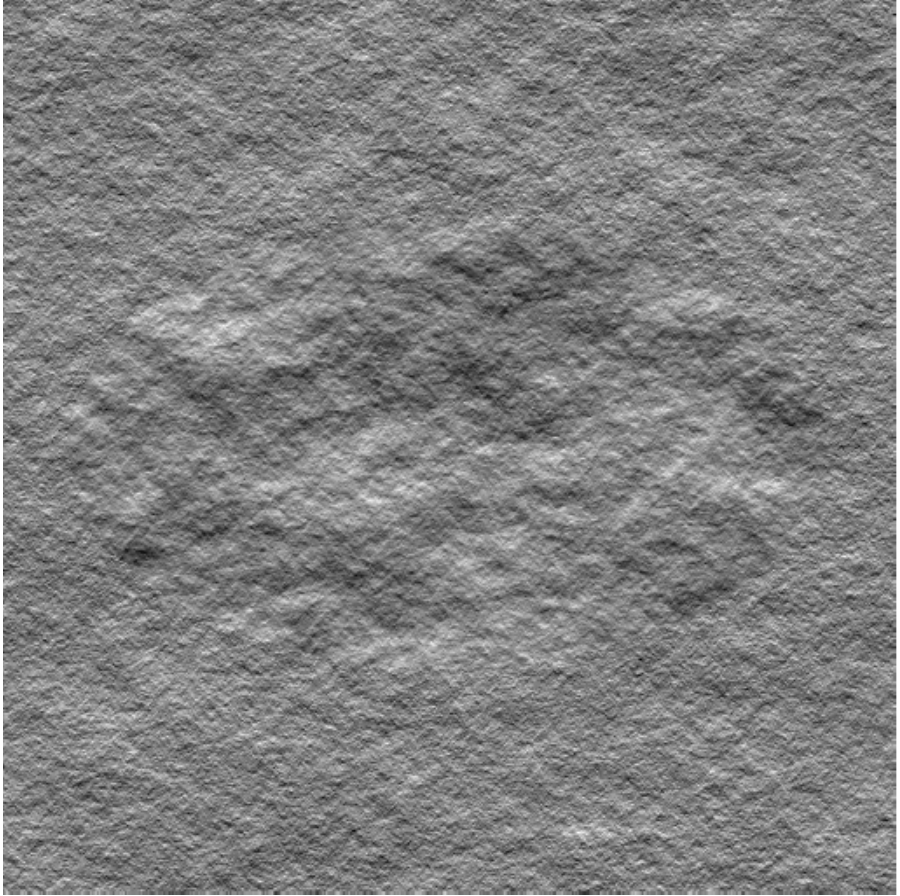}
\subcaption{\label{subfig:ex_txt_2}Texture $X$}
\end{subfigure}\hfill
\begin{subfigure}{0.32\linewidth}
\centering
\includegraphics[width = 0.8\linewidth]{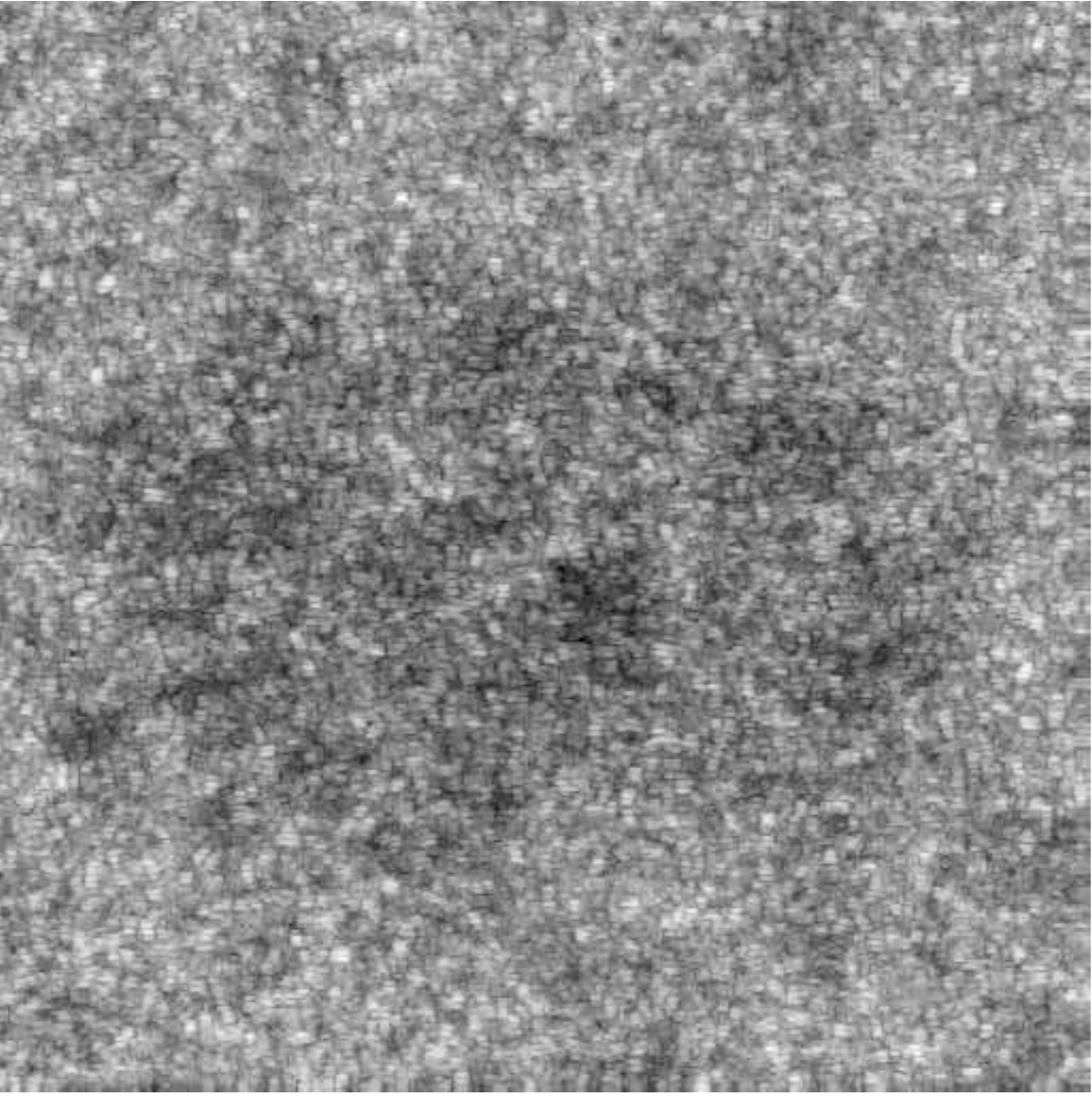}
\subcaption{\label{subfig:ex_etareg_2}$\widehat{\boldsymbol{v}}_{\mathrm{LR}}$}
\end{subfigure}\hfill
\begin{subfigure}{0.32\linewidth}
\centering
\includegraphics[width = 0.8\linewidth]{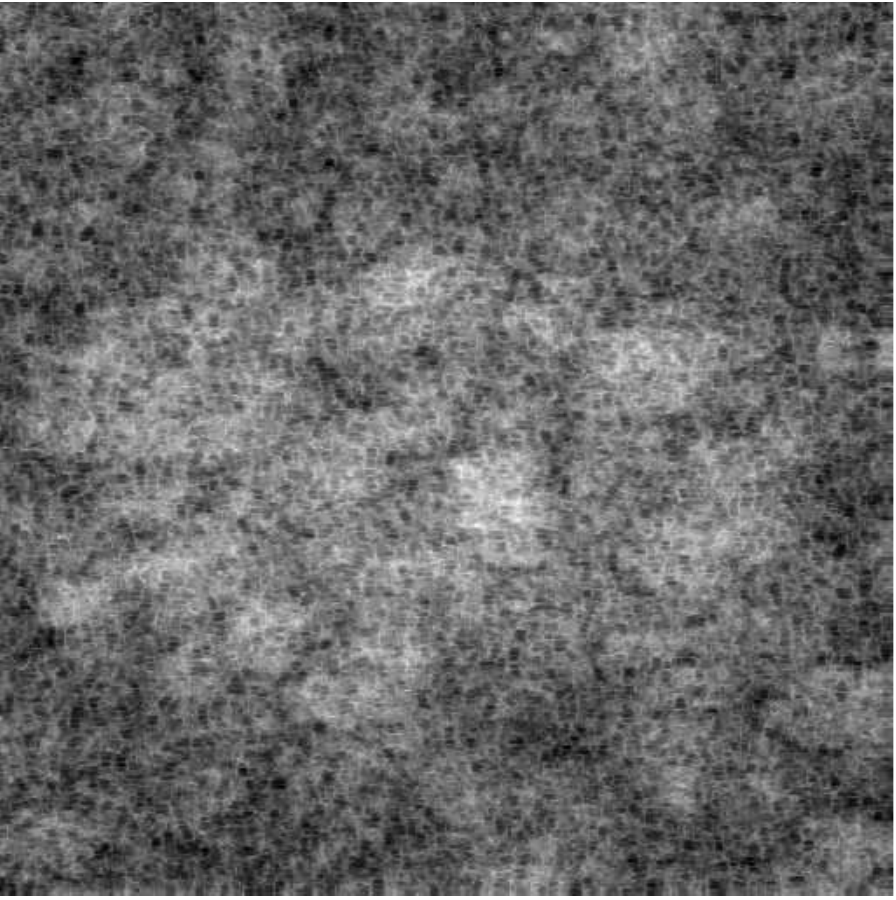}
\subcaption{\label{subfig:ex_hreg_2}$\widehat{\boldsymbol{h}}_{\mathrm{LR}}$}
\end{subfigure}
\begin{subfigure}{0.32\linewidth}
\centering
\includegraphics[width = 0.8\linewidth]{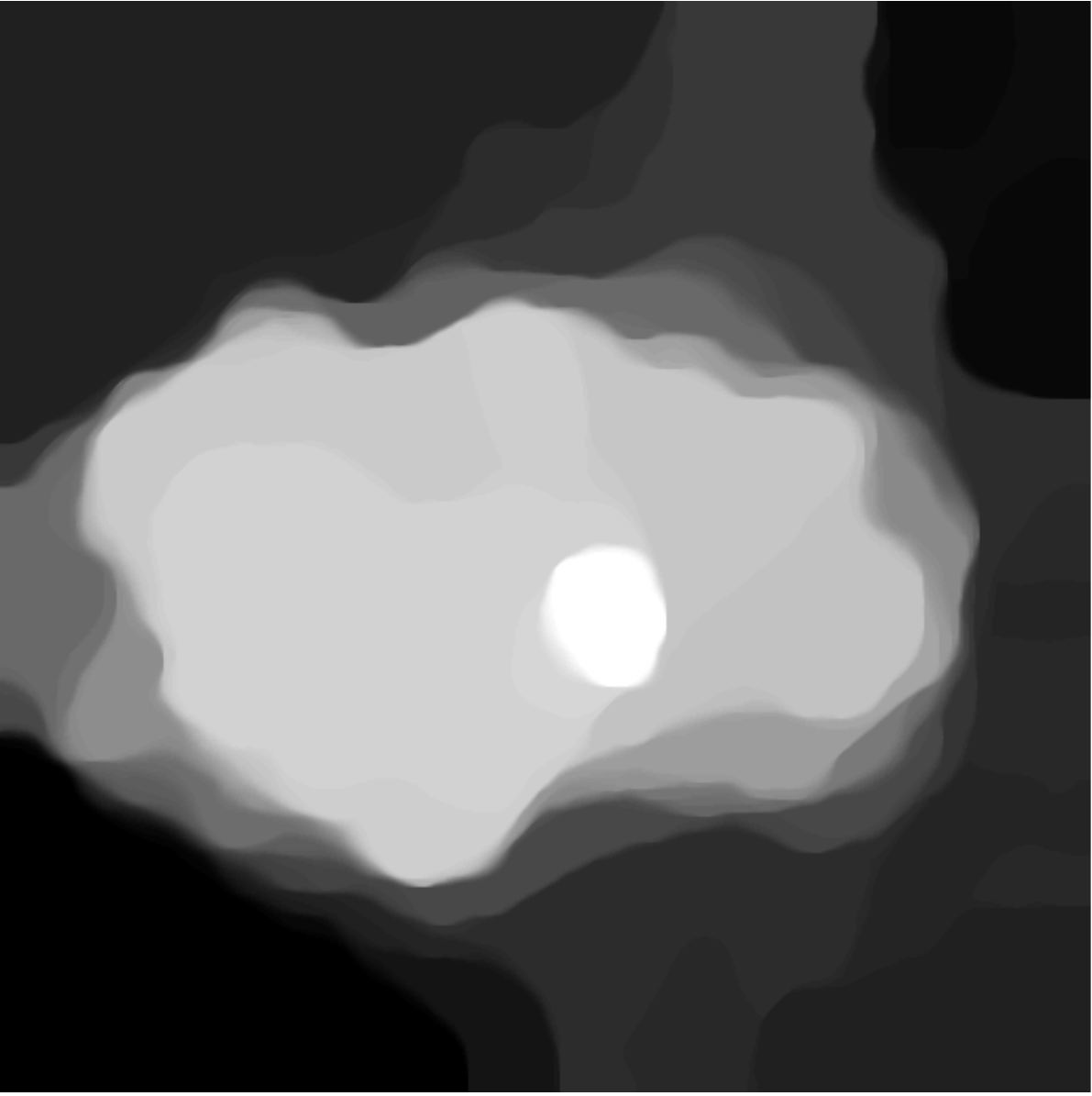}
\subcaption{\label{subfig:ex_hrof_2}$\widehat{\boldsymbol{h}}_{\mathrm{ROF}}$}
\end{subfigure}\hfill
\begin{subfigure}{0.32\linewidth}
\centering
\includegraphics[width = 0.8\linewidth]{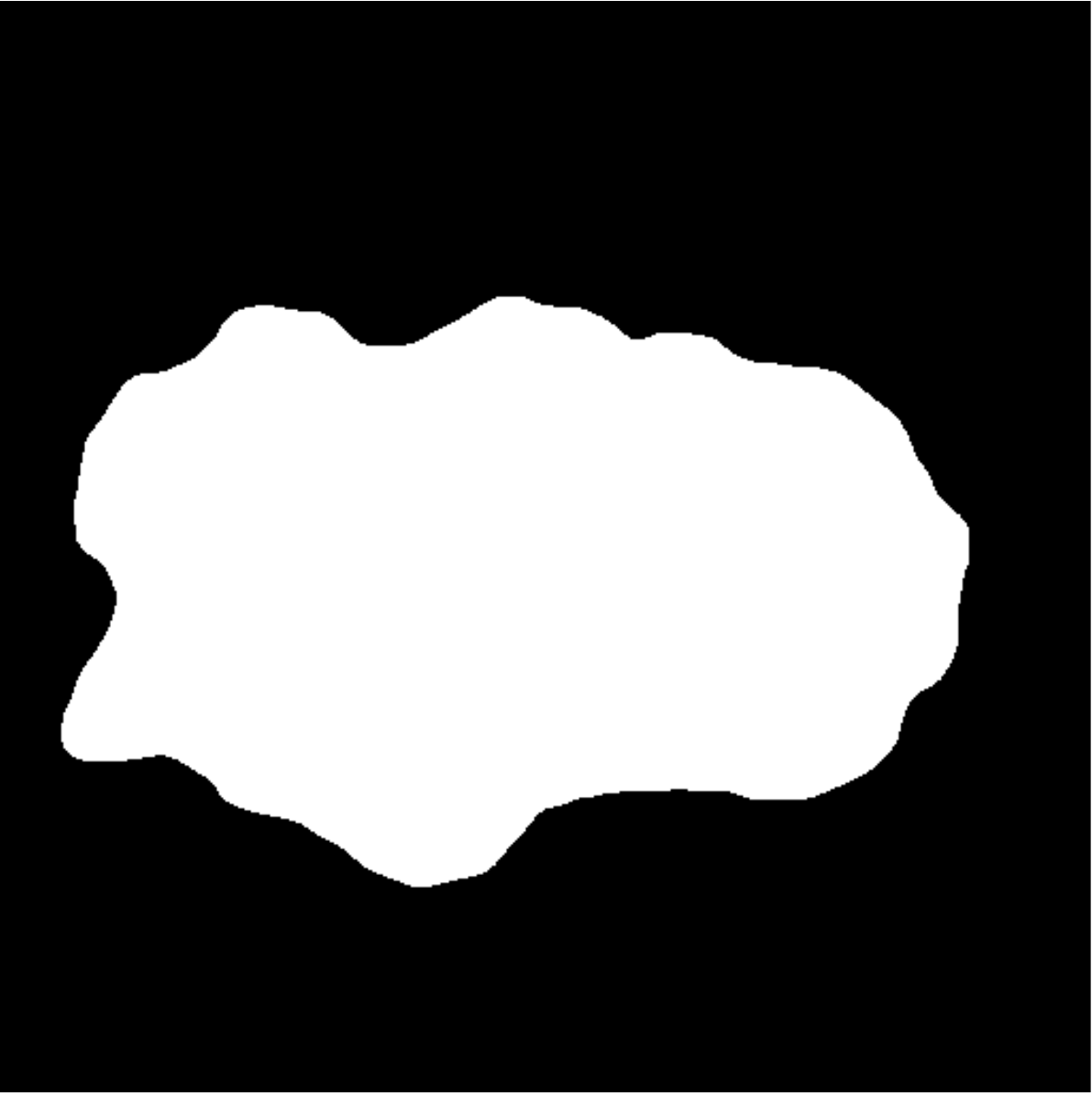}
\subcaption{\label{subfig:ex_trof_2}$\widehat{\boldsymbol{M}}_{\mathrm{ROF}}$}
\end{subfigure}
\caption{\label{fig:reglin} {\bf Piecewise homogeneous fractal texture and local variance and regularity estimates.} (a) Synthesis mask $(H_0, \Sigma_0)=(0.5,0.6)$ and $(H_1, \Sigma_1)=(0.8, 0.65)$;  (b) sample texture (see Section~\ref{sec:synthesis}), (c) and (d) linear regression based estimates of local variance and regularity $\widehat{\boldsymbol{v}}_{\mathrm{LR}}$ and  $\widehat{\boldsymbol{h}}_{\mathrm{LR}}$; (e) and (f) Total variation based estimates of local regularity $\widehat{\boldsymbol{h}}_{\mathrm{ROF}}$ and segmentation $\widehat{\boldsymbol{M}}_{\mathrm{ROF}}$ obtained with Alg.~\ref{alg:trof}.}
\label{fig:xd}
\end{figure}

\subsubsection{Total variation based estimates}
\label{ssec:trof}

To address the poor estimation performance achieved by linear regressions for local estimation, it has been proposed to \emph{denoise} $\widehat{\boldsymbol{h}}_{\mathrm{LR}}$ by using an optimisation framework involving TV regularization,
as a mean to enforce piecewise constant estimates~\cite{pustelnik_combining_2016, nelson2016semi}: 
\begin{align}
\label{eq:tvsep}
\widehat{\boldsymbol{h}}_{\mathrm{ROF}} = \underset{\boldsymbol{h} \in \mathbb{R}^{\lvert \Upsilon \rvert}}{\mathrm{argmin}} \, \frac{1}{2} \lVert \boldsymbol{h} - \widehat{\boldsymbol{h}}_{\mathrm{LR}} \rVert_2^2 + \lambda \mathrm{TV}(\boldsymbol{h}).
\end{align}
The above functional balances a data fidelity term versus a total variation penalization defined as:
\begin{align}
\label{eq:deftv}
\mathrm{TV}(\boldsymbol{h}) = \lVert \textbf{D}\boldsymbol{h} \rVert_{2,1},
\end{align}
where the operator $\textbf{D} : \mathbb{R}^{\lvert \Upsilon \rvert} \rightarrow \mathbb{R}^{2 \times \lvert \Upsilon \rvert}$, consists of horizontal and vertical increments: 
\begin{align}
\label{eq:defD}
\left( \textbf{D} \boldsymbol{h} \right)_{n_1, n_2} = \begin{pmatrix}
h_{n_1,n_2+1} - h_{n_1,n_2} \\
h_{n_1 + 1,n_2} - h_{n_1,n_2}
\end{pmatrix}
\end{align}
and where the mixed norm, $\lVert \cdot \rVert_{2,1}$, defined for $\boldsymbol{z}  = [\boldsymbol{z}_1 ; \ldots ; \boldsymbol{z}_I]^\top \in \mathbb{R}^{I \times \lvert \Upsilon \rvert}$ as: 
\begin{align}
\label{eq:norm21o}
\lVert \boldsymbol{z} \rVert_{2,1} = \sum_{n_1 = 1}^{N_1-1} \sum_{n_2 = 1}^{N_2-1} \sqrt{\sum_{i=1}^I(z_i)^2_{n_1,n_2}}
\end{align}
both ensures an isotropic and piecewise constant behaviour of the estimates~\cite{pustelnik_combining_2016}.

While showing substantial benefits compared to linear regression based estimates and althought fast, the TV-based procedure above, proposed in~\cite{pustelnik_combining_2016}, suffers from several significant limitations: 
i) It is restricted to the estimation of local regularity only and neglects local variance~;
ii) It consists of a two-step process (first apply linear regression to obtain $\widehat{\boldsymbol{h}}_{\mathrm{LR}}$, second apply TV to  $\widehat{\boldsymbol{h}}_{\mathrm{LR}}$ to obtain $\widehat{\boldsymbol{h}}_{\mathrm{ROF}} $) and is hence potentially not optimal.
Section~\ref{sec:optimization} below will propose several solutions that address these two limitations by performing 
the estimate of both local variance and regularity in a single pass at similar computational cost than the two-step procedure.

\subsubsection{A posteriori segmentation and local regularity global estimation} 
\label{sec:posterior}

When the number of regions is known \textit{a priori}, it has been shown in~\cite{cai2013multiclass,cai2018linkage} that a fast iterative thresholding post-processing procedure can be applied to $\widehat{\boldsymbol{h}}_{\mathrm{ROF}} $ to obtain a segmentation of the TV denoised estimate. 
In~\cite{cai2013multiclass,cai2018linkage}  were provided theoretical guarantees linking this Threshold-ROF (T-ROF) strategy to Mumford-Shah-like segmentation. 
Alg.~\ref{alg:trof} explicitly describes the procedure applied to $\widehat{\boldsymbol{h}}_{\mathrm{ROF}} $ to obtain a two-region  T-ROF~segmentation $\widehat{\boldsymbol{M}}_{\mathrm{ROF}}$ with $\widehat{M}_{\mathrm{ROF,\underline{n}}} = 0 $ if  $ \underline{n} \in \Upsilon_0$ and $1$ otherwise. 
Figs.~\ref{subfig:ex_hrof_2},~\ref{subfig:ex_trof_2} report examples of TV denoised estimate $\widehat{\boldsymbol{h}}_{\mathrm{ROF}} $ and T-ROF~segmentation $\widehat{\boldsymbol{M}}_{\mathrm{ROF}}$.

\begin{algorithm}
\textbf{Input:} $\boldsymbol{h} $\\
\textbf{Initialization:} 
\, $\mathrm{m}_0^{[0]} = \underset{\underline{n} \in \Upsilon}{\min} \, h_{\underline{n}}$, \, 
$\mathrm{m}_1^{[0]} = \underset{\underline{n} \in \Upsilon}{\max} \, h_{\underline{n}}$.\\
\For{$t \in \mathbb{N}^*$}{
\underline{\textit{Compute the threshold:}}\\
$ \mathrm{T}^{[t-1]} = \left( \mathrm{m}_0^{[t-1]} + \mathrm{m}_1^{[t-1]}\right)/2 $\\
\underline{\textit{Threshold $\boldsymbol{h}$:}}\\
$\Upsilon_0^{[t]} = \lbrace \underline{n} \, | \, h_{\underline{n}} \leq \mathrm{T}^{[t]} \rbrace$, \,
$\Upsilon_1^{[t]} = \lbrace \underline{n} \, | \, h_{\underline{n}} > \mathrm{T}^{[t]} \rbrace$ \\
\underline{\textit{Update region mean:}}\\
$\displaystyle \mathrm{m}_0^{[t]} = 1/ \lvert \Upsilon_0 \rvert \sum_{\underline{n} \in \Upsilon_0}h_{\underline{n}}$, \,
$\displaystyle \mathrm{m}_1^{[t]} = 1/ \lvert \Upsilon_1 \rvert \sum_{\underline{n} \in \Upsilon_1}h_{\underline{n}}$.\\
}
\textbf{Output:} $\Upsilon_0 = \Upsilon_0^{[\infty]}$, \, $\Upsilon_1 = \Upsilon_1^{[\infty]}$
\caption{\label{alg:trof}T-ROF: iterative thresholding of $\widehat{\boldsymbol{h}}_{\mathrm{ROF}}$}
\end{algorithm}

From T-ROF~segmentation, Texture $X$ can be interpreted as the concatenation of two fractal textures $X_0$ and $X_1$, each with its own uniform regularity and variance. 
Therefore, a posteriori global estimates, $\widehat{H}_{0,\mathrm{ROF}}$ and $\widehat{H}_{1, \mathrm{ROF}}$, for the local regularity of each region, $\Upsilon_0$ and $\Upsilon_1$, can be obtained by performing linear regressions applied to the logarithm of
 multiscale quantities $\boldsymbol{\mathcal{L}}_j$ averaged over $\Upsilon_0$ (resp. $\Upsilon_1$) (cf.~\cite{Wendt2009b}).

\section{Total variation based estimation of local variance and regularity}
\label{sec:optimization}

\subsection{Design of the objective function}

\subsubsection{Linear regression as functional minimization} Linear regression can be viewed as an optimization scheme. 
Indeed, setting $\boldsymbol{v} = \log_2 \boldsymbol{\eta}$
the following strictly convex functional (in variables $(\boldsymbol{v},\boldsymbol{h})$):
\begin{align}
\label{eq:defF}
\Phi( \boldsymbol{v}, \boldsymbol{h} ; \boldsymbol{\mathcal{L}}) = \frac{1}{2} \sum_{j}  \left\lVert  \boldsymbol{v}  +j  \boldsymbol{h} - \log_2 \boldsymbol{ \mathcal{L}}_{j}\right\rVert_{\mathrm{Fro}}^2,
\end{align}
 has a unique minimum corresponding to the linear regression estimates in \ref{eq:linsol}.

\subsubsection{Penalization}
To improve on the poor performance of linear regression, a generic optimization framework can be proposed as 
$(\widehat{\boldsymbol{v}}, \widehat{\boldsymbol{h}}) = \underset{\boldsymbol{v} \in \mathbb{R}^{\lvert \Upsilon \rvert}, \boldsymbol{h} \in \mathbb{R}^{\lvert \Upsilon \rvert}}{\arg \min} \Phi(\boldsymbol{v},\boldsymbol{h}; \boldsymbol{\mathcal{L}}) + \Psi(\boldsymbol{v},\boldsymbol{h}),
$
where the data fidelity term $\Phi$ is that of the linear regression (cf.\eqref{eq:defF}), and where 
the penalization term  $\Psi$ favors piecewise constancy of $\boldsymbol{v}$ and $\boldsymbol{h}$.
Two different strategies are proposed:
\begin{enumerate}[(i)]
\item \emph{Joint estimation}:
\begin{equation}
\label{eq:jointtv}
(\widehat{\boldsymbol{v}}_{\mathrm{J}}, \widehat{\boldsymbol{h}}_{\mathrm{J}}) = \underset{\boldsymbol{v}, \boldsymbol{h} \in \mathbb{R}^{\lvert \Upsilon \rvert}\times \mathbb{R}^{\lvert \Upsilon \rvert}}{\arg \min} \;\Phi(\boldsymbol{v},\boldsymbol{h}; \boldsymbol{\mathcal{L}}) + \Psi_J(\boldsymbol{v},\boldsymbol{h}),
\end{equation}
\begin{align}
\label{eq:deftvj}
\makebox{ with } \quad \Psi_{\mathrm{J}}(\boldsymbol{v},\boldsymbol{h}) = \lambda \left( \mathrm{TV}(\boldsymbol{v}) + \alpha \mathrm{TV}(\boldsymbol{h}) \right), 
\end{align}
that does not favor changes in $\boldsymbol{v}$ and $\boldsymbol{h}$ that occur at same location;
\item \emph{Coupled estimation}: 
\begin{equation}
\label{eq:coupledtv}
(\widehat{\boldsymbol{v}}_{\mathrm{C}}, \widehat{\boldsymbol{h}}_{\mathrm{C}}) = \underset{\boldsymbol{v}, \boldsymbol{h} \in \mathbb{R}^{\lvert \Upsilon \rvert}\times \mathbb{R}^{\lvert \Upsilon \rvert}}{\arg \min}\Phi(\boldsymbol{v},\boldsymbol{h}; \boldsymbol{\mathcal{L}}) + \Psi_{\mathrm{C}}(\boldsymbol{v},\boldsymbol{h}),
\end{equation}
\begin{align}
\label{eq:deftvc}
\makebox{ with } \quad  \Psi_{\mathrm{C}}(\boldsymbol{v},\boldsymbol{h})& = \lambda \mathrm{TV}_{\alpha}(\boldsymbol{v},\boldsymbol{h}),\nonumber\\
&=\lambda\big\Vert \big[ \textbf{D}\boldsymbol{v} ; \alpha\textbf{D}\boldsymbol{h} \big]^\top\big\Vert_{2,1}
\end{align}
where $\mathrm{TV}_{\alpha}$ couples spatial variations of $v_{\underline{n}}$ and $h_{\underline{n}}$ and thus favor their occurrences at same location. 
\end{enumerate}
For both constructions, $\lambda > 0$ and $\alpha > 0$ constitute regularization hyperparameters that need to be selected. 

Further, the thresholding procedure in~\cite{cai2013multiclass,cai2018linkage} is generalized to ``\textit{joint}" and ``\textit{coupled}" estimation strategies: Alg.~\ref{alg:trof}, applied to $\widehat{\boldsymbol{h}}_{\mathrm{J}}$ (resp. $\widehat{\boldsymbol{h}}_{\mathrm{C}}$) provides ``\textit{joint}" (resp. ``\textit{coupled}") segmentation $\widehat{\boldsymbol{M}}_{\mathrm{J}}$ (resp. $\widehat{\boldsymbol{M}}_{\mathrm{C}}$).

Finally, from the two-region segmentation $\widehat{\boldsymbol{M}}_{\mathrm{J}}$ (resp. $\widehat{\boldsymbol{M}}_{\mathrm{C}}$) obtained with Alg.~\ref{alg:trof}, one can use global techniques~\cite{Wendt2009b} to obtain \textit{a posteriori} estimates of the H\"older exponents of each region $\widehat{H}_{0,\mathrm{J}}$ and $\widehat{H}_{1,\mathrm{J}}$ (resp. $\widehat{H}_{0,\mathrm{C}}$ and $\widehat{H}_{1,\mathrm{C}}$) (cf. Sec.~\ref{ssec:trof}).

\subsection{Minimization algorithms}

The data fidelity function $\Phi$ is Lipschitz differentiable.
This is the case neither for $\Psi_{\mathrm{J}}$ nor for $\Psi_{\mathrm{C}}$. 
Therefore, gradient descent methods are not appropriate to solve \eqref{eq:jointtv} or \eqref{eq:coupledtv}. 
Instead, the use of proximal algorithms is adapted to the minimization of nonsmooth functions~\cite{Combettes2011}. 
While the non-differentiability stems from the mixed norm $\lVert \cdot \rVert_{2,1}$, appearing both in $\Psi_{\mathrm{J}}$ and in $\Psi_{\mathrm{C}}$, the linear operator $\textbf{D}$ into the nonsmooth term makes the computation of the proximal operator of total variation difficult~\cite{Chaux_C_2007_j-inv-prob_var_ffb, Pustelnik_N_2011_j-ieee-tip_par_pxa}.
Below two algorithms are devised: the dual FISTA and the primal-dual. 

\subsubsection{Dual FISTA} The well-known Fast Iterative Shrinkage Thresholding Algorihtm (FISTA) algorithm~\cite{chambolle_how_2014} is here customized to Problems~\eqref{eq:jointtv} and~\eqref{eq:coupledtv}, to achieve faster convergences than with the basic dual forward-backward. 
Corresponding iterations are detailed in Algorithms~\ref{alg:fistaj} and~\ref{alg:fistac}.
Convergence guarantees are specified in Theorems~\ref{th:fistaj} and~\ref{th:fistac}.
\begin{theorem}[Convergence of $\mathrm{FISTA}_{\mathrm{J}}$]
\label{th:fistaj}
The sequence $(\boldsymbol{v}^{[t]},\boldsymbol{h}^{[t]})$ generated by Algorithm~\ref{alg:fistaj} converges towards a solution of the \textit{joint estimation problem~\eqref{eq:jointtv}}.
\end{theorem}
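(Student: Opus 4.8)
The plan is to recast \eqref{eq:jointtv} as a composite minimization amenable to FISTA through duality, crucially exploiting the strong convexity of the data-fidelity term $\Phi$. First I would rewrite the objective in the generic form $\min_{\boldsymbol{x}} \Phi(\boldsymbol{x}) + g(\mathbf{L}\boldsymbol{x})$ with $\boldsymbol{x} = (\boldsymbol{v},\boldsymbol{h})$, where $\mathbf{L}\boldsymbol{x} = (\textbf{D}\boldsymbol{v}, \alpha\textbf{D}\boldsymbol{h})$ gathers the two discrete gradients and $g = \lambda \lVert \cdot \rVert_{2,1}$ acts block-separately, so that $g(\mathbf{L}\boldsymbol{x}) = \Psi_{\mathrm{J}}(\boldsymbol{v},\boldsymbol{h})$ reproduces \eqref{eq:deftvj}. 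A key preliminary observation is that $\Phi$ is not merely Lipschitz-differentiable but \emph{strongly} convex: at each pixel its Hessian is the regression matrix $\mathbf{J}$ of \eqref{eq:sm}, which is positive definite whenever at least two octaves are used, so $\Phi$ is $\mu$-strongly convex with $\mu$ the smallest eigenvalue of $\mathbf{J}$. This guarantees, on one hand, that the strongly convex primal objective admits a \emph{unique} minimizer, and on the other hand that the Fenchel conjugate $\Phi^*$ is finite everywhere and differentiable with $(1/\mu)$-Lipschitz gradient.

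Second, I would form the Fenchel--Rockafellar dual. Writing $g(\mathbf{L}\boldsymbol{x}) = \sup_{\boldsymbol{u}} \langle \mathbf{L}\boldsymbol{x},\boldsymbol{u}\rangle - g^*(\boldsymbol{u})$ and exchanging inf and sup, the dual problem reads
\[
\min_{\boldsymbol{u}} \; \Phi^*(-\mathbf{L}^*\boldsymbol{u}) + g^*(\boldsymbol{u}),
\]
where $g^*$ is the indicator function of the polar set of the $\lVert \cdot \rVert_{2,1}$ ball, namely a product of $\lVert \cdot \rVert_{2,\infty}$-type constraints of radius $\lambda$. The smooth part $\boldsymbol{u}\mapsto \Phi^*(-\mathbf{L}^*\boldsymbol{u})$ then has gradient $-\mathbf{L}\nabla\Phi^*(-\mathbf{L}^*\boldsymbol{u})$, Lipschitz with constant $\lVert \mathbf{L} \rVert^2/\mu \leq 8\max(1,\alpha^2)/\mu$ using the classical bound $\lVert \textbf{D} \rVert^2 \leq 8$, while the nonsmooth part $g^*$ is proximable, its proximal operator reducing to an explicit Euclidean projection onto the constraint set. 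This is precisely the composite structure (Lipschitz-differentiable plus proximable) for which FISTA is designed, with step size dictated by $\mu/\lVert \mathbf{L} \rVert^2$.

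Third, I would invoke convergence of FISTA on this dual. With the inertial sequence prescribed in \cite{chambolle_how_2014} (governed by a parameter $a>2$, rather than the original Beck--Teboulle sequence), not only do the dual objective values converge at rate $O(1/t^2)$, but the dual iterates $\boldsymbol{u}^{[t]}$ themselves converge to a dual solution $\boldsymbol{u}^\star$. Finally, the primal iterates are reconstructed through the primal--dual link $(\boldsymbol{v}^{[t]},\boldsymbol{h}^{[t]}) = \nabla\Phi^*(-\mathbf{L}^*\boldsymbol{u}^{[t]})$; since $\nabla\Phi^*$ is $(1/\mu)$-Lipschitz, hence continuous, convergence of $\boldsymbol{u}^{[t]}$ transfers to convergence of $(\boldsymbol{v}^{[t]},\boldsymbol{h}^{[t]})$ towards $\nabla\Phi^*(-\mathbf{L}^*\boldsymbol{u}^\star)$, which by the optimality conditions is exactly the unique minimizer of \eqref{eq:jointtv}.

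The main obstacle I anticipate is twofold. One must carefully certify that strong convexity of $\Phi$ indeed yields the global Lipschitz-differentiability of $\Phi^*$, together with the explicit constant needed to set the step size and to control the reconstruction map. And one must upgrade FISTA's standard value-convergence to genuine iterate-convergence and then transport it back to the primal, since the statement claims convergence of the sequence $(\boldsymbol{v}^{[t]},\boldsymbol{h}^{[t]})$ itself and not merely of the functional values; the continuity of $\nabla\Phi^*$, which is itself a consequence of the strong convexity, is what makes this last transfer legitimate.
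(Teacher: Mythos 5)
Your proposal is correct and follows essentially the same route as the paper: the paper's proof is precisely the one-line observation that Algorithm~\ref{alg:fistaj} is FISTA applied to the Fenchel dual of \eqref{eq:jointtv} with the inertial parameters $\tau_t=(t+b)/b$, $b>2$, of \cite{chambolle_how_2014}, which is exactly the argument you spell out --- strong convexity of $\Phi$ (with constant $\mu$ the smallest eigenvalue of $\mathbf{J}$) makes $\nabla\Phi^*$ $(1/\mu)$-Lipschitz, the Chambolle--Dossal parameter choice upgrades value convergence to convergence of the dual iterates, and the affine primal--dual link $(\boldsymbol{v}^{[t]},\boldsymbol{h}^{[t]})=\nabla\Phi^*(-\mathbf{L}^*\boldsymbol{y}^{[t]})$ transfers this to the primal sequence. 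The only cosmetic difference is that you absorb $\alpha$ into the operator $\mathbf{L}$, whereas Algorithm~\ref{alg:fistaj} keeps it in the second penalty (prox of $(\lambda\alpha\lVert\cdot\rVert_{2,1})^*$), an equivalent reparametrization of the dual variable that merely replaces the step-size bound $\lVert\mathbf{J}^{-1}\rVert\,\lVert\mathbf{D}\rVert^2$ by $\max(1,\alpha^2)\,\lVert\mathbf{J}^{-1}\rVert\,\lVert\mathbf{D}\rVert^2$.
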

\begin{theorem}[Convergence of $\mathrm{FISTA}_{\mathrm{C}}$]
\label{th:fistac}
The sequence $(\boldsymbol{v}^{[t]},\boldsymbol{h}^{[t]})$ generated by Algorithm~\ref{alg:fistac} converges towards a solution of \textit{coupled estimation problem~\eqref{eq:coupledtv}}.
\end{theorem}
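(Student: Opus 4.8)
The plan is to recognize Problem~\eqref{eq:coupledtv} as a composite minimization $\min_{x}\;\Phi(x)+\lambda\lVert Lx\rVert_{2,1}$ in the stacked variable $x=(\boldsymbol{v},\boldsymbol{h})$, where $L$ denotes the linear operator $x\mapsto[\textbf{D}\boldsymbol{v};\alpha\textbf{D}\boldsymbol{h}]$ hidden inside $\mathrm{TV}_{\alpha}$, and to establish convergence through Fenchel--Rockafellar duality, exactly along the lines of Theorem~\ref{th:fistaj}; the only change is the grouping inside the mixed norm, hence the precise shape of $L$ and of the dual constraint set. The two structural ingredients I would exploit are (i) that $\Phi$ is Lipschitz-differentiable \emph{and} strongly convex, with modulus $\mu>0$ the quantity measured earlier in the paper, and (ii) that the nonsmoothness is confined to the mixed norm composed with the bounded operator $L$, so that it can be dualized.

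First I would write the dual problem. Since $\lambda\lVert\cdot\rVert_{2,1}=\iota_{B}^{*}$, with $B$ the product of per-pixel Euclidean balls of radius $\lambda$ (the $\lambda$-ball of the norm dual to $\lVert\cdot\rVert_{2,1}$), Fenchel--Rockafellar duality turns \eqref{eq:coupledtv} into the constrained smooth problem $\min_{y\in B}\;\Phi^{*}(-L^{*}y)$. The key consequence of the $\mu$-strong convexity of $\Phi$ is that $\Phi^{*}$ is Fr\'echet-differentiable with $(1/\mu)$-Lipschitz gradient; hence $y\mapsto\Phi^{*}(-L^{*}y)$ has Lipschitz gradient $-L\,\nabla\Phi^{*}(-L^{*}y)$ of constant $\lVert L\rVert^{2}/\mu$, while the constraint term $\iota_{B}$ has an explicit proximity operator, namely the pixelwise Euclidean projection onto balls of radius $\lambda$. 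This is precisely the smooth-plus-simple structure for which FISTA is designed, and Algorithm~\ref{alg:fistac} is FISTA applied to this dual with step $\mu/\lVert L\rVert^{2}$.

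Next I would transfer convergence from the dual back to the primal. The delicate point is that the statement claims convergence of the \emph{iterates}, not merely the $O(1/t^{2})$ decay of the dual objective granted by the original Beck--Teboulle analysis. I would therefore invoke the refined inertial analysis of Chambolle--Dossal~\cite{chambolle_how_2014}, whose parameter choice is the one hard-coded in Algorithm~\ref{alg:fistac}, to conclude that the dual iterates $y^{[t]}$ converge to some dual optimum $\widehat{y}$. The primal iterate is reconstructed by the forward step $x^{[t]}=\nabla\Phi^{*}(-L^{*}y^{[t]})$, which realizes the primal--dual extremality relation; by continuity of $\nabla\Phi^{*}$ and $L^{*}$ one then gets $x^{[t]}\to\nabla\Phi^{*}(-L^{*}\widehat{y})$, and strong convexity of $\Phi$ ensures this limit is the unique minimizer $(\widehat{\boldsymbol{v}}_{\mathrm{C}},\widehat{\boldsymbol{h}}_{\mathrm{C}})$ of \eqref{eq:coupledtv}, which is the claim.

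The main obstacle is exactly this iterate-convergence requirement: plain FISTA controls only function values, so the whole argument hinges on using the Chambolle--Dossal inertial sequence, together with the quantitative strong-convexity modulus $\mu$ of $\Phi$ proven earlier, which fixes the Lipschitz constant $\lVert L\rVert^{2}/\mu$ of the dual gradient and hence the admissible step. The remaining, more routine, verifications are that the adjoint $L^{*}$ and the norm $\lVert L\rVert$ are computed correctly for the coupled operator $[\textbf{D}\boldsymbol{v};\alpha\textbf{D}\boldsymbol{h}]$, and that $\mathrm{prox}_{\iota_{B}}$ is indeed the pixelwise projection, so that the dual forward-backward step coincides with the one written in the algorithm.
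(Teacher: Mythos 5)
Your proposal is correct and takes essentially the same route as the paper: the paper's own (one-sentence) proof likewise treats Algorithm~\ref{alg:fistac} as FISTA applied to the dual of Problem~\eqref{eq:coupledtv} and attributes convergence of the iterates to the descent-parameter choice $b>2$, $\tau_{t+1}=(t+b)/b$ of Chambolle--Dossal~\cite{chambolle_how_2014}. What you spell out explicitly --- Fenchel--Rockafellar dualization, the $(1/\mu)$-Lipschitz gradient of $\Phi^*$ inherited from the strong convexity of $\Phi$, the pixelwise projection as the proximal step, and primal recovery through $x^{[t]}=\nabla\Phi^{*}(-\textbf{L}^{*}y^{[t]})$ with continuity plus uniqueness of the minimizer --- is precisely the reasoning the paper compresses into that single citation.
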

\begin{proof}
\textit{The proofs of Theorems~\ref{th:fistaj}~and~\ref{th:fistac} stem directly from the choice of descent parameters proposed in Algs.~\ref{alg:fistaj}~and~\ref{alg:fistac} following the reasoning in~\cite{chambolle_how_2014}.}
\end{proof}

\begin{algorithm}
$\begin{array}{ll} \textbf{Initialization:} &\mbox{Set} \; \boldsymbol{u}^{[0]} \in \mathbb{R}^{2 \times \lvert \Upsilon \rvert}, \,\bar{ \boldsymbol{u}}^{[0]} =  \boldsymbol{u}^{[0]};\\
&\mbox{Set} \;  \boldsymbol{\ell}^{[0]} \in \mathbb{R}^{2 \times \lvert \Upsilon \rvert}, \, \bar{\boldsymbol{\ell}}^{[0]} = \boldsymbol{\ell}^{[0]}; \\
& \mbox{Let} \, \left(\mathcal{S}_{\underline{n}}, \mathcal{T}_{\underline{n}}\right) \, \mbox{defined in \eqref{eq:ST}};\\
& \mbox{Let} \, \textbf{J}\, \mbox{defined in \eqref{eq:sm}};\\
&\mbox{Set} \;   (\forall \underline{n})\; \left(v_{\underline{n}}^{[0]}, h_{\underline{n}}^{[0]}\right)^{\top}  = \textbf{J}^{-1}\left(\mathcal{S}_{\underline{n}}, \mathcal{T}_{\underline{n}}\right)^{\top};\\
&  \mbox{Set} \; b>2\;\mbox{and}\;\tau_0 = 1;\\
& \mbox{Set} \; \alpha>0\;\mbox{and}\;\lambda>0;\\
&  \mbox{Set} \; \gamma>0\, \mbox{such that}\, \gamma \, \lVert \textbf{J}^{-1} \rVert \lVert \textbf{D} \rVert^2 < 1;\\
\end{array}$\\
\For{$t \in \mathbb{N}$}{
	\,	
	\underline{\textit{Dual variable update:}}\\ 
	\,
	$\boldsymbol{u}^{[t+1]} = \mathrm{prox}_{\gamma \left( \lambda \lVert . \rVert_{2,1}\right)^*}\left( \bar{\boldsymbol{u}}^{[t]} + \gamma \textbf{D} \boldsymbol{v}^{[t]}\right)$\\
	$\boldsymbol{\ell}^{[t+1]} = \mathrm{prox}_{\gamma \left( \lambda \alpha \lVert . \rVert_{2,1}\right)^*}\left( \bar{\boldsymbol{\ell}}^{[t]} + \gamma \textbf{D} \boldsymbol{h}^{[t]}\right)$\\
	\,
	\underline{\textit{FISTA parameter update}}\\
	\,
	$\tau_{t+1} = \frac{t + b}{b}$\\
	\,
	\underline{\textit{Auxiliary variable update}}\\
	\,
	$\bar{\boldsymbol{u}}^{[t+1]} = \boldsymbol{u}^{[t+1]} + \frac{\tau_t - 1}{\tau_{t+1}} \left( \boldsymbol{u}^{[t+1]} - \boldsymbol{u}^{[t]}\right)$\\
	\,	
	$\bar{\boldsymbol{\ell}}^{[t+1]} = \boldsymbol{\ell}^{[t+1]} + \frac{\tau_t - 1}{\tau_{t+1}} \left( \boldsymbol{\ell}^{[t+1]} - \boldsymbol{\ell}^{[t]}\right)$\\
	\,
	\underline{\textit{Primal variable update}}\\
	\,
	$\begin{pmatrix}
	\boldsymbol{v}^{[t+1]}\\
	\boldsymbol{h}^{[t+1]}
\end{pmatrix} = \begin{pmatrix}
\boldsymbol{v}^{[t]}\\
\boldsymbol{h}^{[t]}
\end{pmatrix} - \textbf{J}^{-1} \begin{pmatrix}
\textbf{D}^*(\boldsymbol{u}^{[t+1]}-\boldsymbol{u}^{[t]})\\
\textbf{D}^*(\boldsymbol{\ell}^{[t+1]} - \boldsymbol{\ell}^{[t]})
\end{pmatrix}$
    }
	\caption{\label{alg:fistaj} $\mathrm{FISTA}_{\mathrm{J}}$: Joint estimation (Pb.~\eqref{eq:jointtv})}
\end{algorithm}

\begin{algorithm}
$\begin{array}{ll} \textbf{Initialization:} &\mbox{Set} \; \boldsymbol{u}^{[0]} \in \mathbb{R}^{2 \times \lvert \Upsilon \rvert}, \,\bar{ \boldsymbol{u}}^{[0]} =  \boldsymbol{u}^{[0]};\\
&\mbox{Set} \;  \boldsymbol{\ell}^{[0]} \in \mathbb{R}^{2 \times \lvert \Upsilon \rvert}, \, \bar{\boldsymbol{\ell}}^{[0]} = \boldsymbol{\ell}^{[0]}; \\
& \mbox{Let} \, \left(\mathcal{S}_{\underline{n}}, \mathcal{T}_{\underline{n}}\right) \, \mbox{defined in \eqref{eq:ST}};\\
& \mbox{Let} \, \textbf{J}\, \mbox{defined in \eqref{eq:sm}};\\
&\mbox{Set} \;   (\forall \underline{n})\; \left(v_{\underline{n}}^{[0]}, h_{\underline{n}}^{[0]}\right)^{\top}  = \textbf{J}^{-1}\left(\mathcal{S}_{\underline{n}}, \mathcal{T}_{\underline{n}}\right)^{\top};\\
&  \mbox{Set} \; b>2\;\mbox{and}\;\tau_0 = 1;\\
& \mbox{Set} \; \alpha>0\;\mbox{and}\;\lambda>0;\\
&  \mbox{Set} \; \gamma>0\, \mbox{s. t.}\, \gamma  \max( 1, \alpha)  \, \lVert \textbf{J}^{-1} \rVert \lVert \textbf{D} \rVert^2 < 1;\\
\end{array}$\\
\For{$t \in \mathbb{N}$}{
	\,	
	\underline{\textit{Dual variable update:}}\\ 
	\,
	$\begin{pmatrix}
	 \boldsymbol{u}^{[t+1]} \\
	 \boldsymbol{\ell}^{[t+1]}
	\end{pmatrix} = \mathrm{prox}_{\gamma \left( \lambda \lVert . \rVert_{2,1}\right)^*} \begin{pmatrix}
	\bar{ \boldsymbol{u}}^{[t]} + \gamma \textbf{D}  \boldsymbol{v}^{[t]}\\
	\bar{ \boldsymbol{\ell}}^{[t]} + \gamma \alpha \textbf{D}  \boldsymbol{h}^{[t]}
	\end{pmatrix}  $\\
	\,
	\underline{\textit{FISTA parameter update}}\\
	\,
	$\tau_{t+1} = \frac{t + b}{b}$\\
	\,
	\underline{\textit{Auxiliary variable update}}\\
	\,
	$\bar{ \boldsymbol{u}}^{[t+1]} =  \boldsymbol{u}^{[t+1]} + \frac{\tau_t - 1}{\tau_{t+1}} \left(  \boldsymbol{u}^{[t+1]} -  \boldsymbol{u}^{[t]}\right)$\\
	\,
	$\bar{ \boldsymbol{\ell}}^{[t+1]} =  \boldsymbol{\ell}^{[t+1]} + \frac{\tau_t - 1}{\tau_{t+1}} \left(  \boldsymbol{\ell}^{[t+1]} -  \boldsymbol{\ell}^{[t]}\right)$\\
	\,
	\underline{\textit{Primal variable update}}\\
	\,
	$\begin{pmatrix}
	\boldsymbol{v}^{[t+1]}\\
	\boldsymbol{h}^{[t+1]}
\end{pmatrix} = \begin{pmatrix}
\boldsymbol{v}^{[t]}\\
\boldsymbol{h}^{[t]}
\end{pmatrix} - \textbf{J}^{-1} \begin{pmatrix}
\textbf{D}^*(\boldsymbol{u}^{[t+1]}-\boldsymbol{u}^{[t]})\\
\alpha\textbf{D}^*(\boldsymbol{\ell}^{[t+1]} - \boldsymbol{\ell}^{[t]})
\end{pmatrix}$
    }
	\caption{\label{alg:fistac} $\mathrm{FISTA}_{\mathrm{C}}$: Coupled estimation (Pb.~\eqref{eq:coupledtv})}
\end{algorithm}

\subsubsection{Primal-dual} Primal-dual algorithms~\cite{chambolle2011first, condat_primal-dual_2013, Vu_B_2013_j-acm_spl_adm, Komodakis_N_2015_j-ieee-spm_pla_dor} can also be customized to solve Problems~\eqref{eq:jointtv}~and~\eqref{eq:coupledtv}. 

This requires the derivation of a closed form expression for the proximal operator associated with the quadratic data fidelity term $\Phi$, provided in Proposition~\ref{prop:proxf}.
\begin{proposition}[Computation of $\textrm{prox}_{\Phi}$]
\label{prop:proxf}
For every $(\boldsymbol{v}, \boldsymbol{h}) \in \mathbb{R}^{\vert \Upsilon \vert} \times \mathbb{R}^{\vert \Upsilon \vert}$, denoting $\left(\boldsymbol{p}, \boldsymbol{q}\right)=\mathrm{prox}_{\Phi}(\boldsymbol{v},\boldsymbol{h}) \in \mathbb{R}^{\vert \Upsilon \vert} \times \mathbb{R}^{\vert \Upsilon \vert}$ one has
\begin{equation*}
\left\lbrace \begin{array}{ll}
\boldsymbol{p} =\frac{ (1+R_2)(\boldsymbol{\mathcal{S}} + \boldsymbol{v}) - R_1 (\boldsymbol{\mathcal{T}} +\boldsymbol{h})}{(1 +
R_0)(1+R_2) - R_1^2},\\
\boldsymbol{q} = \frac{ (1+R_0)(\boldsymbol{\mathcal{T}} + \boldsymbol{h}) - R_1 (\boldsymbol{\mathcal{S}}+\boldsymbol{v}) }{(1 +
R_0)(1+R_2) - R_1^2}.
\end{array} \right.
\end{equation*}
with $\boldsymbol{\mathcal{S}}$ and $\boldsymbol{\mathcal{T}}$ 
defined in \eqref{eq:ST} 
and  $R_0, R_1, R_2$ defined in \eqref{eq:sm}.
\end{proposition}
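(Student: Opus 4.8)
The plan is to exploit the fact that $\Phi$ is a strictly convex quadratic which, crucially, is \emph{separable} across pixels, so that evaluating $\mathrm{prox}_{\Phi}$ amounts to solving one $2\times 2$ linear system per location. By definition, $(\boldsymbol{p},\boldsymbol{q}) = \mathrm{prox}_{\Phi}(\boldsymbol{v},\boldsymbol{h})$ is the unique minimizer of
\[
(\boldsymbol{p},\boldsymbol{q}) \longmapsto \Phi(\boldsymbol{p},\boldsymbol{q};\boldsymbol{\mathcal{L}}) + \tfrac{1}{2}\lVert \boldsymbol{p} - \boldsymbol{v}\rVert_2^2 + \tfrac{1}{2}\lVert \boldsymbol{q} - \boldsymbol{h}\rVert_2^2 .
\]
First I would observe that, from~\eqref{eq:defF}, both $\Phi$ and the two quadratic proximity terms decompose as sums over the pixels $\underline{n} \in \Upsilon$; hence the minimization splits into $\lvert \Upsilon \rvert$ independent scalar problems in the unknowns $(p_{\underline{n}}, q_{\underline{n}})$, each of the form $\tfrac12\sum_j (p_{\underline{n}} + j q_{\underline{n}} - \log_2 \mathcal{L}_{j,\underline{n}})^2 + \tfrac12 (p_{\underline{n}} - v_{\underline{n}})^2 + \tfrac12 (q_{\underline{n}} - h_{\underline{n}})^2$.

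Second, at a fixed pixel I would write the first-order optimality conditions by differentiating this scalar objective with respect to $p_{\underline{n}}$ and $q_{\underline{n}}$ and setting the result to zero. Collecting terms and using the definitions $R_m = \sum_j j^m$ from~\eqref{eq:sm} together with $\mathcal{S}_{\underline{n}}$ and $\mathcal{T}_{\underline{n}}$ from~\eqref{eq:ST}, the two equations read
\[
\begin{pmatrix} 1 + R_0 & R_1 \\ R_1 & 1 + R_2 \end{pmatrix}\begin{pmatrix} p_{\underline{n}} \\ q_{\underline{n}} \end{pmatrix} = \begin{pmatrix} \mathcal{S}_{\underline{n}} + v_{\underline{n}} \\ \mathcal{T}_{\underline{n}} + h_{\underline{n}} \end{pmatrix},
\]
whose system matrix is exactly $\textbf{I}_2 + \textbf{J}$, the ``$+1$'' on each diagonal entry being contributed by the proximity terms while $\textbf{J}$ comes from $\Phi$.

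Finally, I would solve this $2\times 2$ system in closed form, e.g.\ by Cramer's rule, which produces the stated expressions for $\boldsymbol{p}$ and $\boldsymbol{q}$ with common denominator $\det(\textbf{I}_2 + \textbf{J}) = (1+R_0)(1+R_2) - R_1^2$. Well-posedness follows by noting that $\textbf{J}$ is positive semidefinite (it is the Hessian of the regression functional~\eqref{eq:defF}), so $\textbf{I}_2 + \textbf{J}$ is positive definite, its determinant is strictly positive, and the minimizer is indeed unique, consistent with the strict convexity of the prox objective. I do not expect a genuine obstacle here: the argument is essentially bookkeeping. The only points deserving care are recognizing the pixelwise separability at the outset and correctly tracking the coefficient sums $R_0, R_1, R_2$ (and the shift from $\textbf{J}$ to $\textbf{I}_2 + \textbf{J}$) when assembling the linear system from the gradient.
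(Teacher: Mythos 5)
Your proof is correct: the pixelwise separability of $\Phi$ plus the two proximity terms, the first-order optimality conditions yielding the system matrix $\mathrm{Id}_2 + \textbf{J}$ (with the ``$+1$'' shifts on the diagonal coming from the prox terms), and Cramer's rule give exactly the stated formulas, with invertibility guaranteed since $\textbf{J}$ is positive (semi)definite. The paper states this proposition without an explicit proof, but your argument is precisely the intended one — it mirrors the paper's own proof of Proposition~\ref{prop:ccphi}, which likewise writes pixelwise stationarity conditions and solves the resulting $2\times 2$ system (there involving $\textbf{J}$, here $\mathrm{Id}_2 + \textbf{J}$).
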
 

Further, in~\cite{chambolle2011first}, it was described how to take advantage of the strong convexity\footnote{A function $\varphi : \mathcal{H} \rightarrow \mathbb{R}$, defined on an Hilbert space $\mathcal{H}$, is said to be $\mu-$strongly convex, for a given $\mu > 0$ if the function $y \mapsto \varphi(y) - \frac{\mu}{2}\lVert y \rVert_2^2$ is convex. When the function $\varphi$ is differentiable, $\varphi$ is $\mu-$strongly convex if and only if $
\left( \forall (y,z) \in \mathcal{H} \times \mathcal{H} \right) \quad \langle \nabla \varphi (y) - \nabla \varphi (z), y - z  \rangle \geq \mu \lVert y - z\rVert^2 $
where $\langle \cdot \, , \cdot\rangle$ denotes the Hilbert scalar product and $\lVert \cdot \rVert$ the associated scalar product.} of the objective function to obtain fast implementation for primal-dual algorithms, with linear convergence rates.
In Proposition~\ref{prop:convex} below, we prove that the data fidelity term $\Phi$ is strongly convex and derive the closed form expression of the strong convexity coefficient. 
\begin{proposition}
\label{prop:convex}
Function $ \Phi(\boldsymbol{v},\boldsymbol{h};\mathcal{L} )$ is $\mu$-strongly convex w.r.t the variables $(\boldsymbol{v},\boldsymbol{h})$, with $\mu = \chi$ where $\chi>0$ is the lowest eigenvalue of the symmetric and positive definite matrix $\textbf{J}$ defined in Eq.~\eqref{eq:sm}.
\end{proposition}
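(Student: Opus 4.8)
The plan is to exploit that $\Phi$ defined in \eqref{eq:defF} is an inhomogeneous quadratic form in $(\boldsymbol{v},\boldsymbol{h})$, so that its modulus of strong convexity is controlled entirely by a constant Hessian that can be written in closed form. First I would expand the Frobenius norm and observe that $\Phi$ decouples over the pixels of $\Upsilon$: writing $L_{j,\underline{n}} = \log_2 \mathcal{L}_{j,\underline{n}}$, one has $\Phi(\boldsymbol{v},\boldsymbol{h}) = \frac{1}{2}\sum_{\underline{n}\in\Upsilon}\sum_j \big(v_{\underline{n}} + j\,h_{\underline{n}} - L_{j,\underline{n}}\big)^2$, i.e. a sum of $\lvert\Upsilon\rvert$ identical two-variable quadratics. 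This reduces the whole analysis to a single $2\times2$ object.

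Next I would compute the gradient termwise. Recognising $R_m = \sum_j j^m$ and the quantities $\boldsymbol{\mathcal{S}},\boldsymbol{\mathcal{T}}$ of \eqref{eq:ST}, one gets $\nabla_{\boldsymbol{v}}\Phi = R_0\boldsymbol{v} + R_1\boldsymbol{h} - \boldsymbol{\mathcal{S}}$ and $\nabla_{\boldsymbol{h}}\Phi = R_1\boldsymbol{v} + R_2\boldsymbol{h} - \boldsymbol{\mathcal{T}}$. Hence $\nabla\Phi$ is affine with constant linear part equal to the block matrix $\bigl(\begin{smallmatrix}R_0\,\mathrm{Id} & R_1\,\mathrm{Id}\\ R_1\,\mathrm{Id} & R_2\,\mathrm{Id}\end{smallmatrix}\bigr) = \textbf{J}\otimes\mathrm{Id}_{\lvert\Upsilon\rvert}$, whose spectrum consists of the two eigenvalues of $\textbf{J}$, each with multiplicity $\lvert\Upsilon\rvert$.

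I would then apply the differentiable characterisation of strong convexity recalled in the footnote. As $\Phi$ is quadratic, $\nabla\Phi(\boldsymbol{y}) - \nabla\Phi(\boldsymbol{z}) = (\textbf{J}\otimes\mathrm{Id})(\boldsymbol{y}-\boldsymbol{z})$, so that $\langle \nabla\Phi(\boldsymbol{y}) - \nabla\Phi(\boldsymbol{z}),\, \boldsymbol{y}-\boldsymbol{z}\rangle = (\boldsymbol{y}-\boldsymbol{z})^\top(\textbf{J}\otimes\mathrm{Id})(\boldsymbol{y}-\boldsymbol{z}) \geq \chi\,\lVert\boldsymbol{y}-\boldsymbol{z}\rVert^2$, where $\chi$ is the lowest eigenvalue of $\textbf{J}\otimes\mathrm{Id}$, that is, of $\textbf{J}$. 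This yields $\mu$-strong convexity with $\mu=\chi$; the constant is sharp because equality is attained by taking $\boldsymbol{y}-\boldsymbol{z}$ constant across pixels and equal at each pixel to the eigenvector of $\textbf{J}$ associated with $\chi$.

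The only genuinely substantive step is to certify that $\textbf{J}$ is symmetric positive definite, so that $\chi>0$. Symmetry is immediate from \eqref{eq:sm}. For definiteness I would invoke Cauchy--Schwarz on the octave index: $R_1^2 = \big(\sum_j j\big)^2 \leq \big(\sum_j 1\big)\big(\sum_j j^2\big) = R_0 R_2$, whence $\det\textbf{J} = R_0R_2 - R_1^2 \geq 0$, with strict inequality as soon as at least two distinct octaves are used, i.e. $j_1<j_2$; together with $R_0 = j_2-j_1+1 > 0$ this forces both eigenvalues to be strictly positive, hence $\chi>0$. The main difficulty, modest as it is, lies precisely here and in ensuring that the high-dimensional constant is not diluted below that of $\textbf{J}$: the Kronecker structure is exactly what guarantees that the modulus of strong convexity of the full functional \emph{equals} the lowest eigenvalue of the small matrix $\textbf{J}$ rather than merely being bounded by it.
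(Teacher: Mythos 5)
Your proof is correct and follows essentially the same route as the paper: the paper likewise invokes the differentiable characterisation of strong convexity from its footnote, observes that $\nabla\Phi$ has constant linear part given by $\textbf{J}$ acting blockwise (your $\textbf{J}\otimes\mathrm{Id}_{\lvert\Upsilon\rvert}$), and identifies $\mu$ with the smallest eigenvalue of $\textbf{J}$. Your write-up is in fact more careful than the paper's terse argument, which writes the affine gradient as if it were linear and asserts positive definiteness of $\textbf{J}$ without your Cauchy--Schwarz step, but these are refinements of the same argument, not a different approach.
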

Indeed, since $\nabla \Phi(\boldsymbol{v},\boldsymbol{h};\mathcal{L} ) = \textbf{J} \left(
\boldsymbol{v},\boldsymbol{h}
\right)^{\top}$ is linear, the condition for $\Phi$ to be $\mu-$strongly convex can be recasted as: 
\begin{align}
 \langle \nabla \Phi(\boldsymbol{v},\boldsymbol{h};\mathcal{L}) , (\boldsymbol{v},\boldsymbol{h})  \rangle \, &= \, \left\langle \textbf{J}\left(
\boldsymbol{v},\boldsymbol{h} \right)^{\top}, \left(\boldsymbol{v},\boldsymbol{h} \right)^{\top} \right\rangle \,\nonumber\\
&\geq \, \mu \left\Vert \left(
\boldsymbol{v},\boldsymbol{h}
\right) \right\Vert^2
\end{align}
Intuitively a function with a large strong-convexity constant $\mu$ is very stitched around its minimum, hence yielding a good theoretical convergence rate toward the minimizer. 
The strong-convexity parameter is represented in Fig.~\ref{fig:mu_j1_j2} as a function of the range of scales involved in the estimation procedure (cf. \eqref{eq:defF}). 

\begin{figure}[h!]
\centering
\includegraphics[width = 0.5\linewidth]{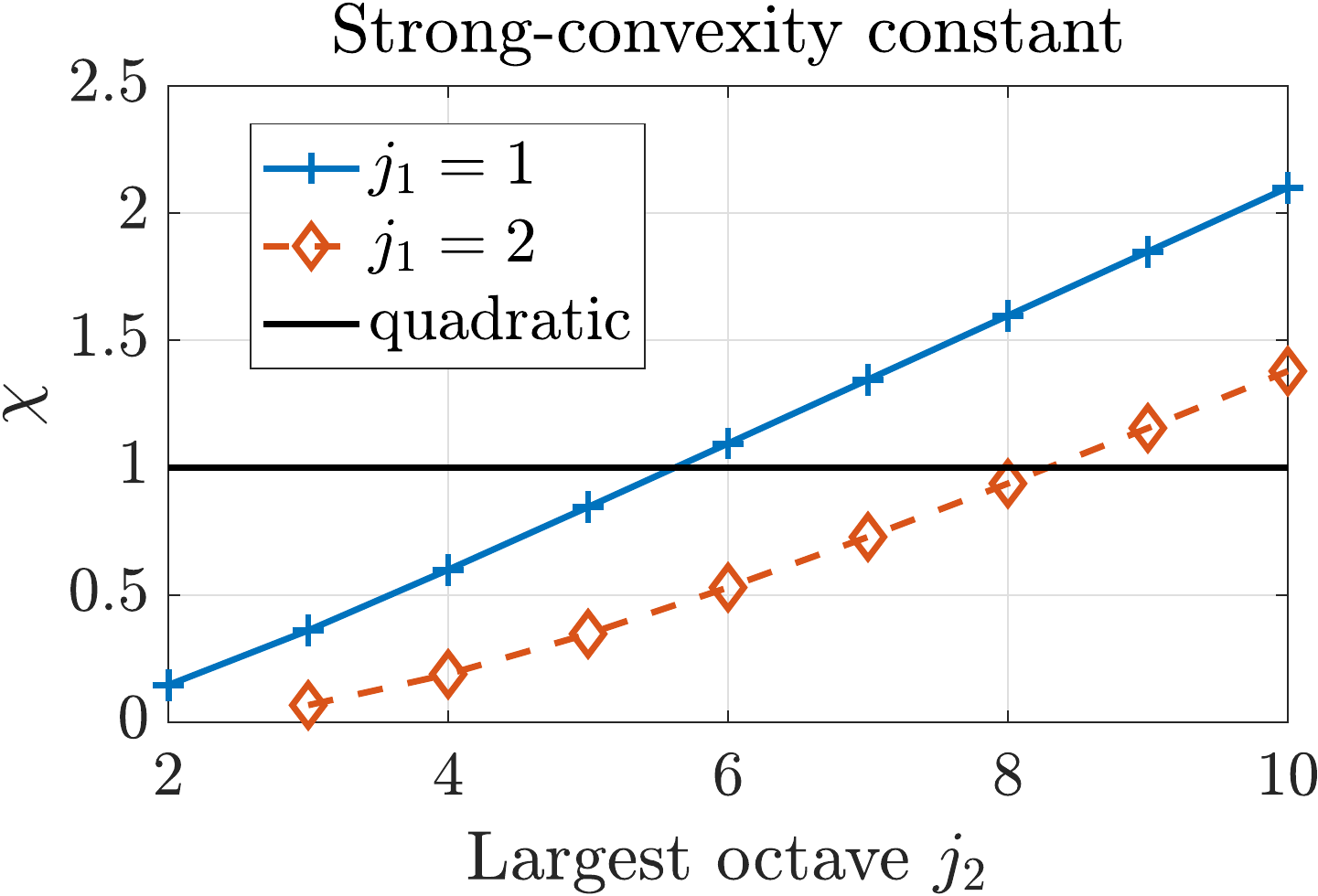}
\caption{\label{fig:mu_j1_j2} {\bf Strong convexity constant} as a function of the range of scales involved in the estimation.}
\end{figure}

The iterations of the devised fast primal-dual algorithms are detailed in Algorithms~\ref{alg:pdj} and \ref{alg:pdc}. 
Convergence guarantees are specified in Theorem~\ref{th:pdj} (resp. Theorem~\ref{th:pdc}).
\begin{theorem}[Convergence of $\mathrm{PD}_{\mathrm{J}}$]
\label{th:pdj}
The sequence $(\boldsymbol{v}^{[\ell]},\boldsymbol{h}^{[\ell]})$ generated by Algorithm~\ref{alg:pdj} converges towards a solution of the \textit{joint estimation problem~\eqref{eq:jointtv}}.
\end{theorem}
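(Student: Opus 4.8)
The plan is to recognize Problem~\eqref{eq:jointtv} as a specific instance of the generic composite problem $\min_{\boldsymbol{x}} G(\boldsymbol{x}) + F(\textbf{K}\boldsymbol{x})$ solved by the primal-dual scheme of~\cite{chambolle2011first}, and then to invoke the accelerated variant of that scheme, whose use is licensed by the strong convexity established in Proposition~\ref{prop:convex}. Concretely, I would take the primal variable $\boldsymbol{x}=(\boldsymbol{v},\boldsymbol{h})$, set $G = \Phi(\cdot,\cdot;\boldsymbol{\mathcal{L}})$ as the smooth strongly convex term, let $\textbf{K}\colon (\boldsymbol{v},\boldsymbol{h}) \mapsto (\textbf{D}\boldsymbol{v}, \textbf{D}\boldsymbol{h})$ be the block-diagonal difference operator, and define $F(\boldsymbol{u},\boldsymbol{\ell}) = \lambda \lVert \boldsymbol{u} \rVert_{2,1} + \lambda\alpha \lVert \boldsymbol{\ell} \rVert_{2,1}$, so that $F(\textbf{K}\boldsymbol{x})$ reproduces exactly the penalty $\Psi_{\mathrm{J}}$ of~\eqref{eq:deftvj}. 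The variables $(\boldsymbol{u}^{[\ell]},\boldsymbol{\ell}^{[\ell]})$ iterated in Algorithm~\ref{alg:pdj} are then identified as the dual variables attached to $F$.

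I would next verify the standing hypotheses of the convergence theorem of~\cite{chambolle2011first}. Both $F$ and $G$ are proper, lower semicontinuous and convex: $F$ is a nonnegative combination of mixed $\ell_{2,1}$ norms and $G=\Phi$ is a finite sum of squared Frobenius norms. The operator $\textbf{K}$ is linear and bounded with $\lVert \textbf{K} \rVert = \lVert \textbf{D} \rVert$, the two blocks acting independently. The decisive structural ingredient is Proposition~\ref{prop:convex}: $G$ is $\mu$-strongly convex with $\mu = \chi$, the smallest eigenvalue of $\textbf{J}$. In particular $\Phi + \Psi_{\mathrm{J}}$ is itself strongly convex, so it possesses a \emph{unique} minimizer and the ``solution'' named in the statement is well defined. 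It then remains to match the proximal steps: the primal update uses $\mathrm{prox}_{\gamma \Phi}$, available in closed form from Proposition~\ref{prop:proxf}, while the dual updates use the proximity operators of the conjugates $(\lambda \lVert \cdot \rVert_{2,1})^*$ and $(\lambda\alpha \lVert \cdot \rVert_{2,1})^*$, recovered from the mixed-norm proximity operator through Moreau's identity, which are exactly the operations appearing in Algorithm~\ref{alg:pdj}.

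With these identifications the result becomes an application of the accelerated primal-dual theorem of~\cite{chambolle2011first}: because $G$ is $\mu$-strongly convex, one may employ the variant in which the inertial and step-size parameters are updated adaptively, of the form $\theta_\ell = (1+2\mu\tau_\ell)^{-1/2}$, $\tau_{\ell+1} = \theta_\ell \tau_\ell$ and $\sigma_{\ell+1} = \sigma_\ell / \theta_\ell$, as encoded in the step-size updates of Algorithm~\ref{alg:pdj}. Since this update leaves the product $\tau_\ell \sigma_\ell$ invariant, the balance condition $\tau_0 \sigma_0 \lVert \textbf{D} \rVert^2 \leq 1$ is propagated to every iteration, which is precisely the hypothesis ensuring convergence of the scheme; the strong convexity of $G$ then upgrades this to the accelerated rate announced in~\cite{chambolle2011first} and guarantees that $(\boldsymbol{v}^{[\ell]},\boldsymbol{h}^{[\ell]})$ converges to the unique solution of~\eqref{eq:jointtv}.

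The main obstacle is not the abstract citation but making this correspondence rigorous: one must check that the concrete choice of the step $\gamma$ and the acceleration parameters hard-coded in Algorithm~\ref{alg:pdj} (together with the roles of $\lambda$ and $\alpha$) genuinely realizes the adaptive rule with $\mu = \chi$ and keeps $\tau_\ell \sigma_\ell \lVert \textbf{D} \rVert^2 \leq 1$ throughout, so that the assumptions of~\cite{chambolle2011first} hold at every iteration and not merely at initialization. Once that verification is in place, convergence follows directly.
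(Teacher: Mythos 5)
Your proposal is correct and takes essentially the same route as the paper: the paper's own proof is a one-line appeal to the accelerated primal-dual scheme of~\cite{chambolle2011first}, justified by the descent-parameter choices hard-coded in Algorithm~\ref{alg:pdj} and the strong convexity of $\Phi$ established in Proposition~\ref{prop:convex}. Your write-up merely makes explicit what the paper leaves implicit, namely the identifications $G=\Phi$, $F$ the separable mixed-norm penalty, $\textbf{K}$ the block-diagonal difference operator, and the invariance of the product of step sizes that propagates the initialization condition $\delta_0\nu_0\lVert\textbf{D}\rVert^2<1$ to all iterations.
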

\begin{theorem}[Convergence of $\mathrm{PD}_{\mathrm{C}}$]
\label{th:pdc}
The sequence $(\boldsymbol{v}^{[\ell]},\boldsymbol{h}^{[\ell]})$ generated by Algorithm~\ref{alg:pdc} converges towards a solution of the \textit{joint estimation problem~\eqref{eq:coupledtv}}.
\end{theorem}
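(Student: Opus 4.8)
The plan is to recognize that the coupled problem~\eqref{eq:coupledtv} is an instance of the generic saddle-point template $\min_{x} f(x) + g(Lx)$ treated by the primal--dual method of~\cite{chambolle2011first}, and then to check that Algorithm~\ref{alg:pdc} is precisely the instantiation of that method with parameters meeting the reference's convergence conditions. Concretely, I would set $x = (\boldsymbol{v},\boldsymbol{h})$, take $f = \Phi(\,\cdot\,;\boldsymbol{\mathcal{L}})$ and $g = \lambda\lVert\cdot\rVert_{2,1}$, and let $L$ be the bounded linear operator $L(\boldsymbol{v},\boldsymbol{h}) = (\textbf{D}\boldsymbol{v}, \alpha\textbf{D}\boldsymbol{h})$, so that $g(L(\boldsymbol{v},\boldsymbol{h})) = \Psi_{\mathrm{C}}(\boldsymbol{v},\boldsymbol{h})$ by~\eqref{eq:deftvc}. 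This is the same reduction as for Theorem~\ref{th:pdj}, the only structural change being that the single mixed norm now couples the two blocks through the $\alpha$-weighted stacked gradient rather than acting on each block separately.

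I would then verify the hypotheses of~\cite{chambolle2011first}. The function $f = \Phi$ is a finite quadratic, hence proper, lower semicontinuous and convex, and by Proposition~\ref{prop:convex} it is $\mu$-strongly convex with $\mu = \chi$, the smallest eigenvalue of $\textbf{J}$; crucially, its proximity operator is available in closed form from Proposition~\ref{prop:proxf}, so the primal resolvent step of the algorithm is exact. The function $g = \lambda\lVert\cdot\rVert_{2,1}$ is proper, lower semicontinuous and convex, and the proximity operator of its conjugate reduces to the block projection used in the dual update, so the dual step is exact as well. Since all functions have full domain in finite dimension, the usual qualification condition holds automatically; combined with the strong convexity of $f$, which already guarantees existence and uniqueness of the primal minimizer of~\eqref{eq:coupledtv}, this ensures that a primal--dual saddle point exists, which is the object to which the iterates must converge.

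It then remains to control the operator norm and to match the parameter schedule. From $\lVert L(\boldsymbol{v},\boldsymbol{h})\rVert^2 = \lVert\textbf{D}\boldsymbol{v}\rVert^2 + \alpha^2\lVert\textbf{D}\boldsymbol{h}\rVert^2 \leq \max(1,\alpha^2)\lVert\textbf{D}\rVert^2(\lVert\boldsymbol{v}\rVert^2 + \lVert\boldsymbol{h}\rVert^2)$ one obtains $\lVert L\rVert \leq \max(1,\alpha)\lVert\textbf{D}\rVert$, which is where the coupling factor $\max(1,\alpha)$ enters the step-size bound. I would then identify the primal and dual step sizes and the relaxation updates $\tau_{t+1}$ of Algorithm~\ref{alg:pdc} with the accelerated recursion of~\cite{chambolle2011first} driven by the modulus $\mu = \chi$ (of the type $\theta_t = (1+2\mu\tau_t)^{-1/2}$, with the dual step rescaled accordingly), and check that the stated choice of $\gamma$ respects the contraction condition $\sigma\tau\lVert L\rVert^2 \leq 1$ at every iteration. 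Once this is done, the convergence theorem of~\cite{chambolle2011first} applies verbatim and yields convergence of $(\boldsymbol{v}^{[\ell]},\boldsymbol{h}^{[\ell]})$ to the unique solution of~\eqref{eq:coupledtv}. I expect the main obstacle to lie exactly in this bookkeeping: translating the single parameter $\gamma$ together with the coupling weight $\max(1,\alpha)$ into the two step sizes $(\sigma,\tau)$ and verifying that the strong-convexity-driven acceleration rule is satisfied at each step, so that the reference's hypotheses hold throughout the iteration and not merely at initialization.
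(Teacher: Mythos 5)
Your route is exactly the paper's own: its proof of Theorem~\ref{th:pdc} is a one-sentence appeal to the descent-parameter choices of Algorithm~\ref{alg:pdc} together with the reasoning of~\cite{chambolle2011first}, and your proposal spells out precisely what that leaves implicit (identification of $f=\Phi$, $g=\lambda\lVert\cdot\rVert_{2,1}$, $L(\boldsymbol{v},\boldsymbol{h})=(\textbf{D}\boldsymbol{v},\alpha\textbf{D}\boldsymbol{h})$, computable proximal steps via Proposition~\ref{prop:proxf}, strong convexity $\mu=\chi$ via Proposition~\ref{prop:convex}, existence of a saddle point). The genuine problem is the step-size check you defer to the end, which fails as stated. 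You correctly compute $\lVert L(\boldsymbol{v},\boldsymbol{h})\rVert^2=\lVert\textbf{D}\boldsymbol{v}\rVert^2+\alpha^2\lVert\textbf{D}\boldsymbol{h}\rVert^2$, so $\lVert L\rVert=\max(1,\alpha)\lVert\textbf{D}\rVert$ (tight: take $\boldsymbol{v}=0$). The contraction condition of~\cite{chambolle2011first} is $\delta_0\nu_0\lVert L\rVert^2\leq 1$, i.e.\ $\delta_0\nu_0\max(1,\alpha)^2\lVert\textbf{D}\rVert^2\leq 1$, whereas Algorithm~\ref{alg:pdc} only enforces $\delta_0\nu_0\max(1,\alpha)\lVert\textbf{D}\rVert^2<1$. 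These differ by a factor $\max(1,\alpha)$: your sentence asserting that $\lVert L\rVert\leq\max(1,\alpha)\lVert\textbf{D}\rVert$ ``is where the coupling factor $\max(1,\alpha)$ enters the step-size bound'' conflates $\lVert L\rVert$ with $\lVert L\rVert^2$. Consequently the verification ``that the stated choice respects $\sigma\tau\lVert L\rVert^2\leq 1$'' succeeds only when $\alpha\leq 1$ (where both conditions reduce to $\delta_0\nu_0\lVert\textbf{D}\rVert^2<1$); for $\alpha>1$ --- and the paper's experiments sweep $\alpha$ up to $10^3$, with practical steps $\delta_0=\nu_0=0.99/(\max(1,\sqrt{\alpha})\lVert\textbf{D}\rVert)$ that violate $\delta_0\nu_0\lVert L\rVert^2\leq 1$ --- the hypothesis of the cited theorem is simply not guaranteed by the algorithm as written. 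To close the proof you must either restrict to $\alpha\leq1$, or strengthen the initialization to $\delta_0\nu_0\max(1,\alpha^2)\lVert\textbf{D}\rVert^2\leq1$, or give an argument that tolerates the weaker condition; none of these appears in your write-up (the discrepancy is inherited from the paper, but your text claims the check passes rather than flagging it).

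A second, smaller overstatement is ``applies verbatim''. The accelerated scheme of~\cite{chambolle2011first} (their Algorithm~2) updates the dual first using an \emph{extrapolated primal} and over-relaxes the primal variable, i.e.\ the variable of the strongly convex function; Algorithm~\ref{alg:pdc} is the mirror image (primal update first, using the extrapolated dual $(\bar{\boldsymbol{u}},\bar{\boldsymbol{\ell}})$, with over-relaxation on the dual), yet it still drives $\vartheta_t,\delta_t,\nu_t$ with the \emph{primal} modulus $\mu$. This hybrid coincides with neither the ``$G$ strongly convex'' nor the ``$F^*$ strongly convex'' case written in the reference, so invoking its theorem verbatim is not possible; one must redo the reference's estimates for this mirrored ordering (the same parameter rules do go through) or argue the correspondence explicitly. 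Finally, a notational slip: Algorithm~\ref{alg:pdc} has no parameter $\gamma$ and no sequence $\tau_t$ --- those belong to the FISTA algorithms --- so the bookkeeping you describe should be phrased in terms of $(\delta_t,\nu_t,\vartheta_t)$.
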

\begin{proof}
\textit{The proofs of Theorems~\ref{th:pdj}~and~\ref{th:pdc} come directly from the choice of descent parameters proposed in Algs.~\ref{alg:pdj}~and~\ref{alg:pdc} following the reasoning of~\cite{chambolle2011first}.\\}
\end{proof}

\begin{algorithm}
$\begin{array}{ll} \textbf{Initialization:} &\mbox{Set} \; \boldsymbol{v}^{[0]} \in \mathbb{R}^{\lvert \Upsilon \rvert}, \,\boldsymbol{u}^{[0]}  =  \textbf{D}\boldsymbol{v}^{[0]}, \, \bar{ \boldsymbol{u}}^{[0]} =  \boldsymbol{u}^{[0]};\\
&\mbox{Set} \; \boldsymbol{h}^{[0]} \in \mathbb{R}^{\lvert \Upsilon \rvert}, \,\boldsymbol{\ell}^{[0]}  =  \textbf{D}\boldsymbol{h}^{[0]}, \, \bar{\boldsymbol{\ell}}^{[0]} = \boldsymbol{\ell}^{[0]}; \\
& \mbox{Set} \; \alpha>0\;\mbox{and}\;\lambda>0;\\
&\mbox{Set} \; (\delta_0 ,\nu_0)\, \mbox{such that}\, \delta_0 \nu_0 \lVert \textbf{D} \rVert^2 < 1;\\\\
\end{array}$\\
\vspace{0.3cm}

\For{$t \in \mathbb{N}^*$}{
\,
\underline{\textit{Primal variable update:}}\\
\,
$\begin{pmatrix}
\boldsymbol{v}^{[t+1]} \\
\boldsymbol{h}^{[t+1]}
\end{pmatrix} = \mathrm{prox}_{\delta_t \Phi} \left( \begin{pmatrix}
\boldsymbol{v}^{[t]} \\
\boldsymbol{h}^{[t]}
\end{pmatrix} - \delta_t \begin{pmatrix}
\textbf{D}^* \bar{\boldsymbol{u}}^{[t]}\\
\textbf{D}^* \bar{\boldsymbol{\ell}}^{[t]}
\end{pmatrix} \right)$\\
\,
\underline{\textit{Dual variable update:}}\\
\,
$\boldsymbol{u}^{[t+1]} = \mathrm{prox}_{\nu_t \left( \lambda \lVert \cdot \rVert_{2,1}\right)^*} \left( \boldsymbol{u}^{[t]} + \nu_t \textbf{D} \boldsymbol{v}^{[t+1]} \right)$ \\
\,
$\boldsymbol{\ell}^{[t+1]} = \mathrm{prox}_{\nu_t \left( \lambda \alpha \lVert \cdot \rVert_{2,1}\right)^*} \left( \boldsymbol{\ell}^{[t]} + \nu_t \textbf{D} \boldsymbol{h}^{[t+1]} \right)$ \\
\,
\underline{\textit{Descent steps update:}}\\
\,
$\vartheta_t = \left( 1+ 2 \mu \delta_t \right)^{-1/2}$, $\delta_{t+1} = \vartheta_t \delta_t, \, \nu_{t+1} = \nu_t/\vartheta_t$\\
\,
\underline{\textit{Auxiliary variable update:}}\\
\,
$\begin{pmatrix}
\bar{\boldsymbol{u}}^{[t+1]} \\
\bar{\boldsymbol{\ell}}^{[t+1]}
\end{pmatrix} =  \begin{pmatrix}
\boldsymbol{u}^{[t+1]} \\
\boldsymbol{\ell}^{[t+1]}
\end{pmatrix} + \vartheta_t \left(\begin{pmatrix}
\boldsymbol{u}^{[t+1]} \\
\boldsymbol{\ell}^{[t+1]}
\end{pmatrix} - \begin{pmatrix}
\boldsymbol{u}^{[t]} \\
\boldsymbol{\ell}^{[t]}
\end{pmatrix}  \right)$}
\caption{\label{alg:pdj}  $\mathrm{PD}_{\mathrm{J}}$: Joint estimation (Pb.~\eqref{eq:jointtv})}
\end{algorithm}

\begin{algorithm}
$\begin{array}{ll} \textbf{Initialization:} \!\!\!\!\!\!\!\!\!\!\!\!\!\!\!\!\!\!\!\!\!\!\!\!\!\!\!\!&\\
&\mbox{Set} \; \boldsymbol{v}^{[0]} \in \mathbb{R}^{\lvert \Upsilon \rvert}, \,\boldsymbol{u}^{[0]}  =  \textbf{D}\boldsymbol{v}^{[0]}, \, \bar{ \boldsymbol{u}}^{[0]} =  \boldsymbol{u}^{[0]};\\
&\mbox{Set} \; \boldsymbol{h}^{[0]} \in \mathbb{R}^{\lvert \Upsilon \rvert}, \,\boldsymbol{\ell}^{[0]}  =  \alpha\textbf{D}\boldsymbol{h}^{[0]}, \, \bar{\boldsymbol{\ell}}^{[0]} = \boldsymbol{\ell}^{[0]}; \\
& \mbox{Set} \; \alpha>0\;\mbox{and}\;\lambda>0.\\
&\mbox{Set} \; (\delta_0 ,\nu_0)\, \mbox{such that}\, \delta_0 \nu_0 \max(1,\alpha)\lVert \textbf{D} \rVert^2 < 1;\\\\
\end{array}$\\

\For{$t \in \mathbb{N}^*$}{
\,
\underline{\textit{Primal variable update:}}\\
\,
$\begin{pmatrix}
\boldsymbol{v}^{[t+1]} \\
\boldsymbol{h}^{[t+1]}
\end{pmatrix} = \mathrm{prox}_{\delta_t \Phi} \left( \begin{pmatrix}
\boldsymbol{v}^{[t]} \\
\boldsymbol{h}^{[t]}
\end{pmatrix} - \delta_t \begin{pmatrix}
\textbf{D}^* \bar{\boldsymbol{u}}^{[t]}\\
\alpha \textbf{D}^* \bar{\boldsymbol{\ell}}^{[t]}
\end{pmatrix} \right)$\\
\,
\underline{\textit{Dual variable update:}}\\
\,
$\begin{pmatrix}
	\boldsymbol{u}^{[t+1]} \\
	\boldsymbol{\ell}^{[t+1]}
	\end{pmatrix} = \mathrm{prox}_{\nu_t \left( \lambda \lVert . \rVert_{2,1}\right)^*} \begin{pmatrix}
	\boldsymbol{u}^{[t]} + \nu_t \textbf{D} \boldsymbol{v}^{[t+1]}\\
	\boldsymbol{\ell}^{[t]} + \nu_t \alpha \textbf{D} \boldsymbol{h}^{[t+1]}
	\end{pmatrix}  $\\
\,
\underline{\textit{Descent steps update:}}\\
\,
$\vartheta_t = \left( 1+ 2 \mu \delta_t \right)^{-1/2}$, $\delta_{t+1} = \vartheta_t \delta_t, \, \nu_{t+1} = \nu_t/\vartheta_t$\\
\,
\underline{\textit{Auxiliary variable update:}}\\
\,
$\begin{pmatrix}
\bar{\boldsymbol{u}}^{[t+1]} \\
\bar{\boldsymbol{\ell}}^{[t+1]}
\end{pmatrix} =  \begin{pmatrix}
\boldsymbol{u}^{[t+1]} \\
\boldsymbol{\ell}^{[t+1]}
\end{pmatrix} + \vartheta_t \left(\begin{pmatrix}
\boldsymbol{u}^{[t+1]} \\
\boldsymbol{\ell}^{[t+1]}
\end{pmatrix} - \begin{pmatrix}
\boldsymbol{u}^{[t]} \\
\boldsymbol{\ell}^{[t]}
\end{pmatrix}  \right)$}
\caption{\label{alg:pdc}  $\mathrm{PD}_{\mathrm{C}}$: Coupled estimation (Pb.~\eqref{eq:coupledtv})}
\end{algorithm}

\subsection{Duality gap}
\label{subsec:gap}
To ensure fair comparisons between the different algorithms, we construct a stopping criterion based on the \emph{duality gap}. 
Let $\mathcal{H}, \mathcal{G}$ Hilbert spaces, $\Theta : \mathcal{H} \rightarrow ]-\infty, +\infty]$, $\Xi : \mathcal{G}  \rightarrow ]-\infty, +\infty]$ and $\textbf{L} : \mathcal{H} \rightarrow \mathcal{G}$ a bounded linear operator. From a primal optimization problem of the general form $\widehat{\boldsymbol{x}} = \underset{\boldsymbol{x} \in \mathcal{H}}{\mathrm{\arg\min}} \, \,\Theta(\boldsymbol{x}) + \Xi(\textbf{L}\boldsymbol{x})$,
and its associated dual problem $\widehat{\boldsymbol{y}} = \underset{\boldsymbol{y} \in \mathcal{G}}{\mathrm{\arg\min}} \, \, \Theta^*(-\textbf{L}^*\boldsymbol{y}) + \Xi^*(\boldsymbol{y})$, we define:
\begin{align}
\label{eq:defGamma}
\Gamma(\boldsymbol{x};\boldsymbol{y}) := \Theta(\boldsymbol{x}) + \Xi(\textbf{L}\boldsymbol{x}) + \Theta^*(-\textbf{L}^*\boldsymbol{y}) + \Xi^*(\boldsymbol{y}),
\end{align}
$\Theta^*$ (resp. $\Xi^*$), being the Fenchel conjugate of $\Theta$ (resp. $\Xi$).
\begin{definition}
The \emph{duality gap} $\delta \Gamma$ associated to primal and dual optimization problems is defined as the infimum:
\begin{align}
\label{eq:definf}
\delta \Gamma := \underset{(\boldsymbol{x},\boldsymbol{y}) \in \mathcal{H}\times \mathcal{G}}{\inf} \Gamma(\boldsymbol{x}; \boldsymbol{y}).
\end{align}
\end{definition}
Under some loose assumptions on $\Theta$, $\Xi$ and $\textbf{L}$, referred as \emph{strong duality} in~\cite{Bauschke_H_2011_book_con_amo}, the duality gap $\delta\Gamma$ is zero and the respective solutions $\widehat{\boldsymbol{x}}$ and $\widehat{\boldsymbol{y}}$ of primal and dual problems are characterized by $\Gamma(\widehat{\boldsymbol{x}}; \widehat{\boldsymbol{y}}) = \delta\Gamma = 0$.

Primal-dual algorithms presented above built a minimizing sequence $(\boldsymbol{x}^{[t]}, \boldsymbol{y}^{[t]})_{t\in\mathbb{N}}$ for $\Gamma$,
therefore converging towards the unique point achieving the infimum of Eq.~\eqref{eq:definf}: $(\widehat{\boldsymbol{x}}, \widehat{\boldsymbol{y}})$. Then 
the convergence of $\boldsymbol{x}^{[t]}$ (resp. $\boldsymbol{y}^{[t]}$) towards $\widehat{\boldsymbol{x}}$ (resp. $\widehat{\boldsymbol{y}}$) can be measured evaluating the primal-dual functional $\Gamma$, as:
\begin{align}
\Gamma(\boldsymbol{x}^{[t]}; \boldsymbol{y}^{[t]}) \underset{t \rightarrow \infty}{\longrightarrow} 0.
\end{align}

The ``\textit{joint}"~\eqref{eq:jointtv} and ``\textit{coupled}"~\eqref{eq:coupledtv} optimization problems, with primal variable $\boldsymbol{x} = (\boldsymbol{v}, \boldsymbol{h}) \in \mathbb{R}^{ \lvert \Upsilon \rvert} \times \mathbb{R}^{ \lvert \Upsilon \rvert}$, share the same data fidelity term $\Theta (\boldsymbol{x}) = \Phi(\boldsymbol{v}, \boldsymbol{h}; \boldsymbol{\mathcal{L}})$ and a penalization based on the mixed norm $\Xi = \lambda \lVert \cdot \rVert_{2,1}$. Yet, the linear operator \textbf{L} differ between \textit{joint} and \textit{coupled} formulations, as:\\ 
\noindent \textit{i)} \,$\textbf{L}\boldsymbol{x} = [\textbf{D}\boldsymbol{v}, \alpha \textbf{D}\boldsymbol{h}] \in \mathbb{R}^{2 \times 2\lvert \Upsilon \rvert}$, for ``\textit{joint}"~\eqref{eq:jointtv},\\
\noindent \textit{ii)} $\textbf{L}\boldsymbol{x} = [\textbf{D}\boldsymbol{v} ; \alpha \textbf{D}\boldsymbol{h}] \in \mathbb{R}^{4 \times \lvert \Upsilon \rvert}$, for ``\textit{coupled}"~\eqref{eq:coupledtv}.\\
For Pb.~\eqref{eq:jointtv} (resp.~\eqref{eq:coupledtv}), denoting $\boldsymbol{y} = (\boldsymbol{u}, \boldsymbol{\ell})$ the dual variable, the evaluation of $\Gamma_{\mathrm{J \, (resp. \, C)}}(\boldsymbol{v}, \boldsymbol{h}; \boldsymbol{u}, \boldsymbol{\ell})$ requires computing: \\
\noindent \textit{i)} \, \,$\textbf{L}^*$ (straightforward from $\textbf{D}^*$),\\
\noindent \textit{ii)} \, $\Xi^*=(\lambda \lVert \cdot \rVert_{2,1})^* = \iota_{\Vert \cdot \Vert_{2,+\infty}\leq \lambda}$ (direct computation),\\
\noindent \textit{iii)} $\Theta^* = \Phi^*$ which is devised in Proposition~\ref{prop:ccphi}.
\begin{proposition}
\label{prop:ccphi}
Let $\textbf{J}$ and $\left(\boldsymbol{\mathcal{S}}, \boldsymbol{\mathcal{T}}\right)$ defined in \eqref{eq:sm} and \eqref{eq:ST},
\begin{align}
\label{eq:phi_conj}
\Phi^*(\boldsymbol{v}, \boldsymbol{h}; \boldsymbol{\mathcal{L}}) &= \frac{1}{2} \langle (\boldsymbol{v}, \boldsymbol{h})^\top, \textbf{J}^{-1}(\boldsymbol{v}, \boldsymbol{h})^\top \rangle  + \langle (\boldsymbol{\mathcal{S}},\boldsymbol{\mathcal{T}})^\top, \textbf{J}^{-1}(\boldsymbol{v}, \boldsymbol{h})^\top \rangle + \boldsymbol{\mathcal{C}},  \\
\boldsymbol{\mathcal{C}} &= \frac{1}{2} \langle (\boldsymbol{\mathcal{S}}, \boldsymbol{\mathcal{T}})^\top ,  \textbf{J}^{-1}(\boldsymbol{\mathcal{S}}, \boldsymbol{\mathcal{T}})^\top \rangle - \frac{1}{2} \sum_j (\log_2 \boldsymbol{\mathcal{L}}_{j} )^2. \nonumber
\end{align}
\end{proposition}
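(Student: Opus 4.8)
The plan is to recognize $\Phi$ as a strictly convex quadratic form in the stacked variable $(\boldsymbol{v},\boldsymbol{h})$ and then apply the standard closed form for the Legendre--Fenchel conjugate of such a form. First I would expand the Frobenius norm in the definition~\eqref{eq:defF}. Since the sum over scales $j$ acts pixelwise and the squared norm splits into quadratic, linear and constant contributions, collecting the coefficients $R_0,R_1,R_2$ of~\eqref{eq:sm} and the quantities $\boldsymbol{\mathcal{S}},\boldsymbol{\mathcal{T}}$ of~\eqref{eq:ST} yields the compact identity
\begin{equation*}
\Phi(\boldsymbol{v},\boldsymbol{h};\boldsymbol{\mathcal{L}}) = \tfrac{1}{2}\langle \textbf{J}(\boldsymbol{v},\boldsymbol{h})^\top,(\boldsymbol{v},\boldsymbol{h})^\top\rangle - \langle (\boldsymbol{\mathcal{S}},\boldsymbol{\mathcal{T}})^\top,(\boldsymbol{v},\boldsymbol{h})^\top\rangle + \tfrac{1}{2}\sum_j \lVert \log_2 \boldsymbol{\mathcal{L}}_j\rVert^2,
\end{equation*}
where $\textbf{J}$ is understood to act pixelwise as the $2\times 2$ matrix of~\eqref{eq:sm}. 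The cross term contributes $2R_1\langle\boldsymbol{v},\boldsymbol{h}\rangle$, which is precisely the off-diagonal part of $\textbf{J}$.

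Next I would compute $\Phi^*(\boldsymbol{v},\boldsymbol{h}) = \sup_{(\boldsymbol{p},\boldsymbol{q})}\,\langle(\boldsymbol{v},\boldsymbol{h})^\top,(\boldsymbol{p},\boldsymbol{q})^\top\rangle - \Phi(\boldsymbol{p},\boldsymbol{q};\boldsymbol{\mathcal{L}})$. Because Proposition~\ref{prop:convex} guarantees that $\textbf{J}$ is symmetric positive definite (lowest eigenvalue $\chi>0$), the objective inside the supremum is strictly concave and coercive, so the supremum is attained at the unique point where the gradient vanishes, namely $\textbf{J}(\boldsymbol{p},\boldsymbol{q})^\top = (\boldsymbol{v},\boldsymbol{h})^\top + (\boldsymbol{\mathcal{S}},\boldsymbol{\mathcal{T}})^\top$, i.e. $(\boldsymbol{p},\boldsymbol{q})^\top = \textbf{J}^{-1}\big[(\boldsymbol{v},\boldsymbol{h})^\top + (\boldsymbol{\mathcal{S}},\boldsymbol{\mathcal{T}})^\top\big]$. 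Substituting this maximizer back and completing the square (using the symmetry of $\textbf{J}^{-1}$ so that the cross terms combine), the half of the quadratic-in-$\boldsymbol{p},\boldsymbol{q}$ part cancels against the substituted linear term, leaving exactly the quadratic-plus-linear expression of~\eqref{eq:phi_conj}; the constant $\boldsymbol{\mathcal{C}}$ then gathers the leftover $\tfrac{1}{2}\langle(\boldsymbol{\mathcal{S}},\boldsymbol{\mathcal{T}})^\top,\textbf{J}^{-1}(\boldsymbol{\mathcal{S}},\boldsymbol{\mathcal{T}})^\top\rangle$ produced by the square, minus the constant $\tfrac{1}{2}\sum_j(\log_2\boldsymbol{\mathcal{L}}_j)^2$ carried over from $\Phi$ with the expected sign reversal.

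The only genuine point requiring care is the invertibility and positive-definiteness of $\textbf{J}$, which legitimizes writing $\textbf{J}^{-1}$ and which ensures the stationary point is a true (and unique) maximizer rather than a saddle; this is supplied verbatim by Proposition~\ref{prop:convex}. Everything else is bookkeeping: matching the pixelwise action of $\textbf{J}$ with the global scalar products $\langle\cdot,\cdot\rangle$ on $\mathbb{R}^{\lvert\Upsilon\rvert}\times\mathbb{R}^{\lvert\Upsilon\rvert}$, and tracking the constant term so that it reappears correctly inside $\boldsymbol{\mathcal{C}}$. I therefore expect no substantive obstacle, the main effort being the careful completion of the square and the sign accounting in the constant.
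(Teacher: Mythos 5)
Your proof is correct and takes essentially the same route as the paper's: write the Fenchel conjugate as a supremum, identify the maximizer from the first-order optimality condition $\textbf{J}(\bar{\boldsymbol{v}},\bar{\boldsymbol{h}})^\top = (\boldsymbol{v},\boldsymbol{h})^\top + (\boldsymbol{\mathcal{S}},\boldsymbol{\mathcal{T}})^\top$, and re-inject it into the objective. If anything, your version is more complete than the paper's, which stops at the re-injection instruction without carrying it out, whereas you perform the completion of the square explicitly and justify (via the positive definiteness of $\textbf{J}$ from Proposition~\ref{prop:convex}) that the stationary point is indeed the unique global maximizer.
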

\begin{proof}
By definition of the Fenchel conjugate, 
\begin{align}
F^*(\boldsymbol{v},\boldsymbol{h};\boldsymbol{\mathcal{L}}) = \underset{\widetilde{\boldsymbol{v}} \in \mathbb{R}^{\lvert \Upsilon \rvert},\widetilde{\boldsymbol{h}} \in \mathbb{R}^{\lvert \Upsilon \rvert}}{\sup} \langle \widetilde{\boldsymbol{v}},\boldsymbol{v} \rangle + \langle \widetilde{\boldsymbol{h}}, \boldsymbol{h} \rangle - F(\widetilde{\boldsymbol{v}},\widetilde{\boldsymbol{h}};\boldsymbol{\mathcal{L}}).
\end{align}
The supremum is obtained at $(\bar{\boldsymbol{v}},\bar{\boldsymbol{h}})$ such that, for every $\underline{n}\in \Omega$,
\begin{align}
\begin{cases}
 v_{\underline{n}} - \sum_j \left( \bar{v}_{\underline{n}} + j\bar{h}_{\underline{n}} - \log_2 \mathcal{L}_{j,\underline{n}} \right) = 0 \\
h_{\underline{n}} - \sum_j j\left( \bar{v}_{\underline{n}} + j\bar{h}_{\underline{n}} - \log_2 \mathcal{L}_{j,\underline{n}} \right) = 0.
\end{cases}
\end{align}
or equivalently,
\begin{align}
\left\lbrace \begin{array}{l}
R_0 \bar{v}_{\underline{n}} + R_1 \bar{h}_{\underline{n}} = v_{\underline{n}} + \mathcal{S}_{\underline{n}} \\
R_1 \bar{v}_{\underline{n}} + R_2 \bar{h}_{\underline{n}} = h_{\underline{n}} + \mathcal{T}_{\underline{n}} \\
\end{array} \right. 
\end{align}
that yields to
\begin{align}
\begin{pmatrix}
\bar{v}_{\underline{n}} \\
\bar{h}_{\underline{n}}
\end{pmatrix} = \textbf{J}^{-1} \begin{pmatrix}
v_{\underline{n}} + \mathcal{S}_{\underline{n}} \\
h_{\underline{n}} + \mathcal{T}_{\underline{n}}
\end{pmatrix}
\end{align}
and it is then necessary to re-inject this expression into the explicit expression of $\Psi$.
\end{proof}

\section{Piecewise homogeneous fractal textures}
\label{sec:synthesis}

\noindent {\bf Homogeneous fractal textures.}~ 
Numerous models of homogeneous textures were proposed in the literature, mostly consisting of 2D extensions (referred to as fractional Brownian field fBf) of fractional Brownian motion, the unique univariate Gaussian exactly selfsimilar process with stationary increments~\cite{cohen2013fractional}.
However, in most applications, real world textures are better modeled by stationary processes, hence by increments of fBf, referred to as fractional Gaussian fields. 
Further, to construct a formally relevant definition for piecewise homogeneous fractal textures, as originally proposed here, it is easier to \emph{manipulate} stationary processes. 

For Gaussian processes, space or frequency domain definitions of fBf are theoretically equivalent and several variations were proposed in either domain, mostly consisting in different tuning of the 2D extension of the fractional integration kernel underlying selfsimilar textures (cf. e.g.,~\cite{abry1996wavelet,cohen2013fractional,pereira2016dissipative}).
Yet, in practice, numerical issues needs to be accounted for, such as discrete sampling from a continuous process, or computation of integrals defined from a theoretically infinite support kernel and related border effects.
Further, fast circulant embedding matrix algorithms were proposed, that yet do not permit to reach the full range $0<H<1$ for the selfsimilarity parameter~\cite{stein2002fast}.
Therefore, we will make use of a self-customized construction that combines an effective implementation scheme with excellent theoretical and practical control of the local variance and regularity, while permitting direct extension to piecewise homogeneous construction. 

Following e.g.,~\cite{Bierme2007,Roux2013,Didier2018}, we start from an harmonizable representation of fBf $B_{\underline{n}},$ $\underline{n} \in \mathbb{R}^{\lvert \Upsilon \rvert}$, $\Upsilon = \lbrace 1, \hdots, N \rbrace^2$: 
\begin{equation}
\label{eq:fbf}
B_{\underline{n}} = \frac{\Sigma}{C(H)} \int \frac{\mathrm{e}^{-\mathrm{i}\underline{f} . \underline{n}}- 1}{\lVert \underline{f} \rVert^{H+1}} \mathrm{d}\widetilde{G}(\underline{f}),
\end{equation}
with $\mathrm{d}\widetilde{G}(\underline{f})$ the Fourier transform of a white Gaussian noise and 
\begin{align*}
C(H) &=
\frac{\pi^{1/2} \Gamma (H+ 1/2)}{2^{d/2}H\Gamma(2H) \sin(\pi H) \Gamma(H + d/2)}.
\end{align*}
The covariance function of the zero mean process $B_{\underline{n}}$ reads: 
\begin{align}
\label{eq:cov_fbm}
\mathbb{E} \left[ B_{\underline{n}} B_{\underline{m}}\right] = \frac{\Sigma^2}{2} \left( \lVert\underline{n} \rVert^{2H} +  \lVert\underline{m} \rVert^{2H} - \lVert \underline{n} - \underline{m} \rVert^{2H}\right). 
\end{align}

Texture $Y_{\underline{n}} $ is constructed from increments of $B_{\underline{n}}$ as (with $\textbf{e}_1$,  $\textbf{e}_2$, unitary vectors in horizontal and vertical directions): 
\begin{equation}
\label{eq:propfield}
Y_{\underline{n}} = \frac{\Sigma}{2 \delta^H \sqrt{ 1 - 2^{H-2}}}( \underbrace{B_{\underline{n}+ \delta \textbf{e}_1} - B_{\underline{n}} }_{\text{horizontal increment}} + \underbrace{ B_{\underline{n}+ \delta \textbf{e}_2} - B_{\underline{n}} }_{\text{vertical increment}}),
 \end{equation}
	
\begin{proposition}
The field $Y = (Y_{\underline{n}})_{\underline{n}\in \Upsilon}$ 
is a zero-mean Gaussian process with variance $ \mathbb{E} \left[Y_{\underline{n}}^2\right] = \Sigma^2$ and  covariance:\\
$ \displaystyle
\mathbb{E} \left[ Y_{\underline{n}+\Delta \underline{n}}Y_{\underline{n}} \right] = \frac{\Sigma^2 \delta^{-2H}}{4 - 2^H }( \lVert  \Delta \underline{n}  + \delta \textbf{e}_1 \rVert^{2H}\\
\vspace{1mm}
 + \lVert  \Delta \underline{n}  - \delta \textbf{e}_1 \rVert^{2H} + \lVert  \Delta \underline{n}  + \delta \textbf{e}_2 \rVert^{2H}
 + \lVert  \Delta \underline{n}  - \delta \textbf{e}_2 \rVert^{2H} - 3  \lVert  \Delta \underline{n}  \rVert^{2H}\\
 \vspace{1mm}
 - \frac{1}{2} \lVert  \Delta \underline{n}  + \delta \textbf{e}_1 -  \delta \textbf{e}_2 \rVert^{2H}
 - \frac{1}{2} \lVert   \Delta \underline{n}  - \delta \textbf{e}_1 +  \delta \textbf{e}_2 \rVert^{2H}).
$
\end{proposition}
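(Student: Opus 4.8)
The plan is to exploit the fact that $Y$ is, by construction, a finite linear combination of the entries of the jointly Gaussian, zero-mean family $(B_{\underline{n}})_{\underline{n}}$ defined in~\eqref{eq:propfield}; hence $(Y_{\underline{n}})_{\underline{n}}$ is itself jointly Gaussian with zero mean, and its whole law is determined by its second-order structure. It therefore suffices to establish the displayed covariance formula and to read off the variance by evaluating it at $\Delta\underline{n}=\underline{0}$. No probabilistic argument beyond bilinearity of the covariance is needed: the entire content is an algebraic simplification built on~\eqref{eq:cov_fbm}.

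The first step is to record the four-point increment identity for the fractional Brownian field. Expanding a product of two increments by bilinearity and inserting~\eqref{eq:cov_fbm} four times, one gets
\begin{align*}
\mathbb{E}\big[(B_{\underline{a}}-B_{\underline{b}})(B_{\underline{c}}-B_{\underline{d}})\big] = \frac{\Sigma^2}{2}\big(\lVert \underline{a}-\underline{d}\rVert^{2H} + \lVert \underline{b}-\underline{c}\rVert^{2H} - \lVert \underline{a}-\underline{c}\rVert^{2H} - \lVert \underline{b}-\underline{d}\rVert^{2H}\big),
\end{align*}
in which the absolute-position contributions $\lVert\underline{a}\rVert^{2H},\lVert\underline{b}\rVert^{2H},\lVert\underline{c}\rVert^{2H},\lVert\underline{d}\rVert^{2H}$ all cancel. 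This cancellation is exactly what makes the increment field stationary, so that the covariance of $Y$ will depend on the two locations only through $\Delta\underline{n}$, as claimed.

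Next I would write $Y_{\underline{n}} = c\,\big(\mathcal{I}_1(\underline{n}) + \mathcal{I}_2(\underline{n})\big)$ with $\mathcal{I}_k(\underline{n}) = B_{\underline{n}+\delta\textbf{e}_k}-B_{\underline{n}}$ and $c$ the normalizing constant of~\eqref{eq:propfield}, and expand $\mathbb{E}[Y_{\underline{n}+\Delta\underline{n}}Y_{\underline{n}}]$ into the four cross terms $\mathbb{E}[\mathcal{I}_k(\underline{n}+\Delta\underline{n})\,\mathcal{I}_l(\underline{n})]$, $k,l\in\{1,2\}$. Applying the identity above to each term, with $\underline{a}=\underline{n}+\Delta\underline{n}+\delta\textbf{e}_k$, $\underline{b}=\underline{n}+\Delta\underline{n}$, $\underline{c}=\underline{n}+\delta\textbf{e}_l$ and $\underline{d}=\underline{n}$, each pair $(k,l)$ contributes four norms depending only on $\Delta\underline{n}$, $\delta\textbf{e}_k$ and $\delta\textbf{e}_l$.

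The bulk of the work, and the only place where errors can creep in, is the ensuing bookkeeping. The two diagonal pairs $(1,1)$ and $(2,2)$ each yield the collinear-shift terms $\lVert\Delta\underline{n}\pm\delta\textbf{e}_k\rVert^{2H}$ together with $-2\lVert\Delta\underline{n}\rVert^{2H}$, while the two off-diagonal pairs $(1,2)$ and $(2,1)$ each yield a mixed-shift term $\lVert\Delta\underline{n}+\delta\textbf{e}_1-\delta\textbf{e}_2\rVert^{2H}$ (resp. $\lVert\Delta\underline{n}-\delta\textbf{e}_1+\delta\textbf{e}_2\rVert^{2H}$) together with $-\lVert\Delta\underline{n}\rVert^{2H}$ and further collinear-shift contributions. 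After regrouping one finds coefficient $1$ on each of the four $\lVert\Delta\underline{n}\pm\delta\textbf{e}_k\rVert^{2H}$ terms, $-3$ on $\lVert\Delta\underline{n}\rVert^{2H}$, and $-\tfrac12$ on each mixed term, exactly as stated; the overall multiplicative constant collapses to $\Sigma^2\delta^{-2H}/(4-2^H)$ upon using $4-2^H = 4(1-2^{H-2})$, which is where the precise form of $c$ enters. Finally I would confirm the variance by setting $\Delta\underline{n}=\underline{0}$ and using $\lVert\delta\textbf{e}_k\rVert=\delta$ and $\lVert\delta\textbf{e}_1-\delta\textbf{e}_2\rVert=\delta\sqrt{2}$: the bracket reduces to $4\delta^{2H}-2^H\delta^{2H}=\delta^{2H}(4-2^H)$, so that $\mathbb{E}[Y_{\underline{n}}^2]=\Sigma^2$, which both proves the variance claim and certifies that the normalization in~\eqref{eq:propfield} is the correct one.
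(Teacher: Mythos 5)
Your overall strategy is the right one, and in fact the paper states this proposition without giving any proof at all, so your argument can only be judged on its own merits. On those merits the structure is sound: joint Gaussianity and zero mean follow from linearity of the construction; the four-point identity
$\mathbb{E}\big[(B_{\underline{a}}-B_{\underline{b}})(B_{\underline{c}}-B_{\underline{d}})\big] = \frac{\Sigma^2}{2}\big(\lVert \underline{a}-\underline{d}\rVert^{2H} + \lVert \underline{b}-\underline{c}\rVert^{2H} - \lVert \underline{a}-\underline{c}\rVert^{2H} - \lVert \underline{b}-\underline{d}\rVert^{2H}\big)$
is a correct consequence of \eqref{eq:cov_fbm}, and the cancellation of absolute-position terms is indeed what makes the answer depend on $\Delta\underline{n}$ only. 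Your bookkeeping also checks out: with $\underline{a}=\underline{n}+\Delta\underline{n}+\delta\textbf{e}_k$, $\underline{b}=\underline{n}+\Delta\underline{n}$, $\underline{c}=\underline{n}+\delta\textbf{e}_l$, $\underline{d}=\underline{n}$, summing over the four pairs $(k,l)$ gives coefficients $2,2,2,2$ on the collinear-shift norms, $-6$ on $\lVert\Delta\underline{n}\rVert^{2H}$ and $-1$ on each mixed norm, which after the overall factor $\tfrac{1}{2}$ yields exactly the pattern $1,1,1,1,-3,-\tfrac12,-\tfrac12$ of the statement.

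The one step that is not airtight is the constant. Taking the paper's formulas literally, the covariance \eqref{eq:cov_fbm} of $B$ already carries a factor $\Sigma^2/2$, and the normalization in \eqref{eq:propfield} carries a further factor $\Sigma$ inside your constant $c$; your own computation then produces the prefactor $c^2\,\Sigma^2 = \Sigma^4\delta^{-2H}/(4-2^H)$, not $\Sigma^2\delta^{-2H}/(4-2^H)$, and correspondingly a variance $\Sigma^4$ rather than $\Sigma^2$ at $\Delta\underline{n}=\underline{0}$. The proposition as stated is recovered only if exactly one power of $\Sigma^2$ enters, i.e.\ if $B$ is read as the unit-amplitude fractional Brownian field (dropping $\Sigma^2$ from \eqref{eq:cov_fbm}), or equivalently if the $\Sigma$ is dropped from the normalization in \eqref{eq:propfield}. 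This double counting is an inconsistency in the paper's own conventions rather than a flaw in your method, but your assertion that ``the overall multiplicative constant collapses to $\Sigma^2\delta^{-2H}/(4-2^H)$'' silently discards a factor $\Sigma^2$; a careful write-up should state explicitly which normalization of $B$ it adopts, after which your proof is complete.
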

\vspace{1mm}
\noindent {\bf Piecewise homogeneous fractal textures.}~ Piecewise homogeneous fractal textures $X= (X_{\underline{n}})_{\underline{n}\in \Upsilon}$ are defined as the concatenation of $M$ (pairwise disjoint) regions, denoted $(\Upsilon_m)_{0\leq m \leq M-1}$, such that $\Upsilon = \cup_m \Upsilon_m$, with textures in each region consisting of an homogeneous fractal $Y_{\underline{n}}^{(m)}$ ($0 \leq m \leq M-1$), with variance and regularity $(\Sigma_{m}^2, H_{m})$: 
\begin{align*}
X_{\underline{n}} = Y_{\underline{n}}^{(m)}, \, \, \mathrm{ when } \, \underline{n} \in \Upsilon_m.
\end{align*}
Examples of piecewise fractal textures are shown in Fig.~\ref{fig:exfGf}.

\begin{figure}[t]
\centering
\begin{tabular}{cccc}
 \includegraphics[width = 0.2\linewidth]{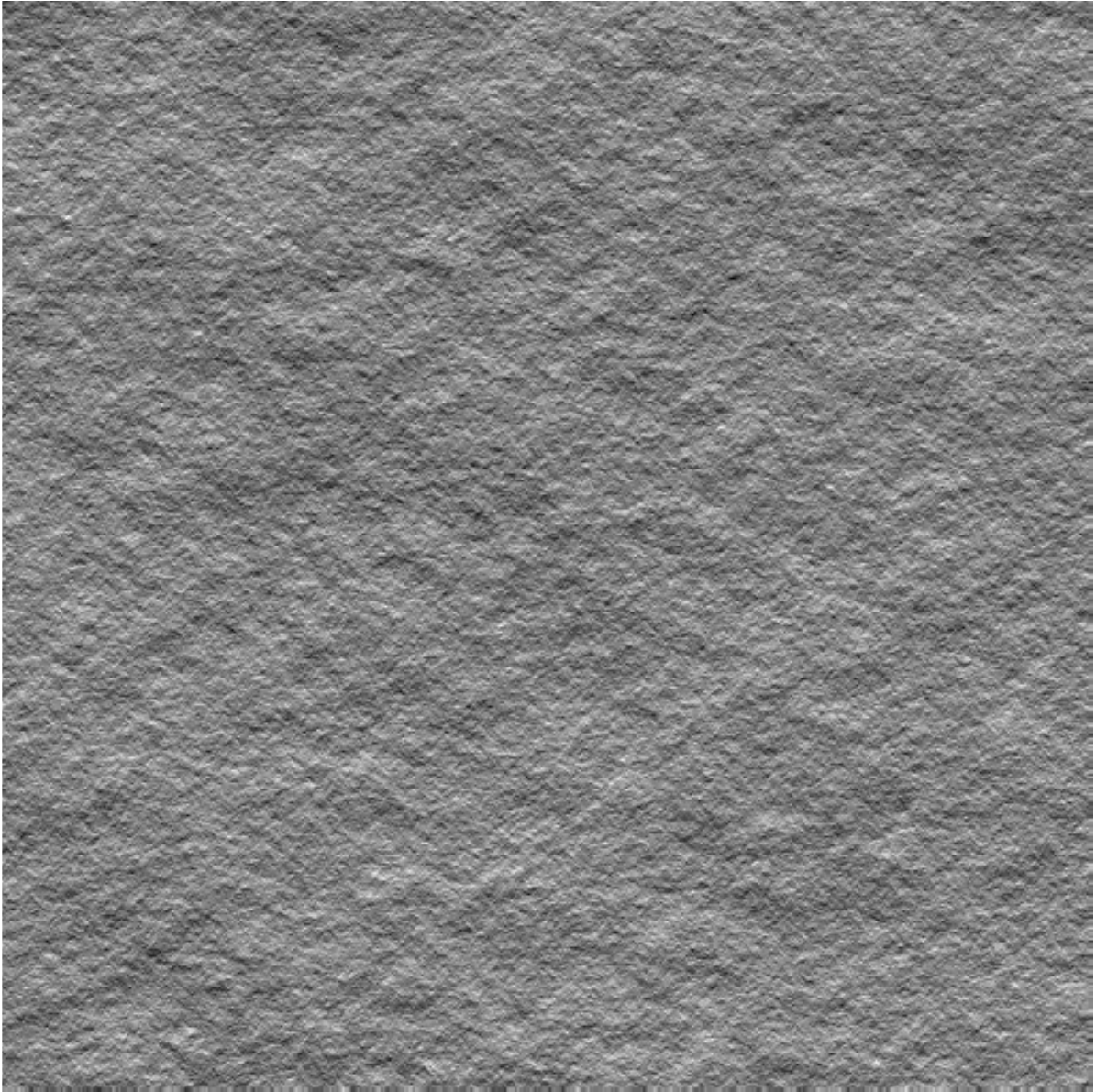}   &\includegraphics[width = 0.2\linewidth]{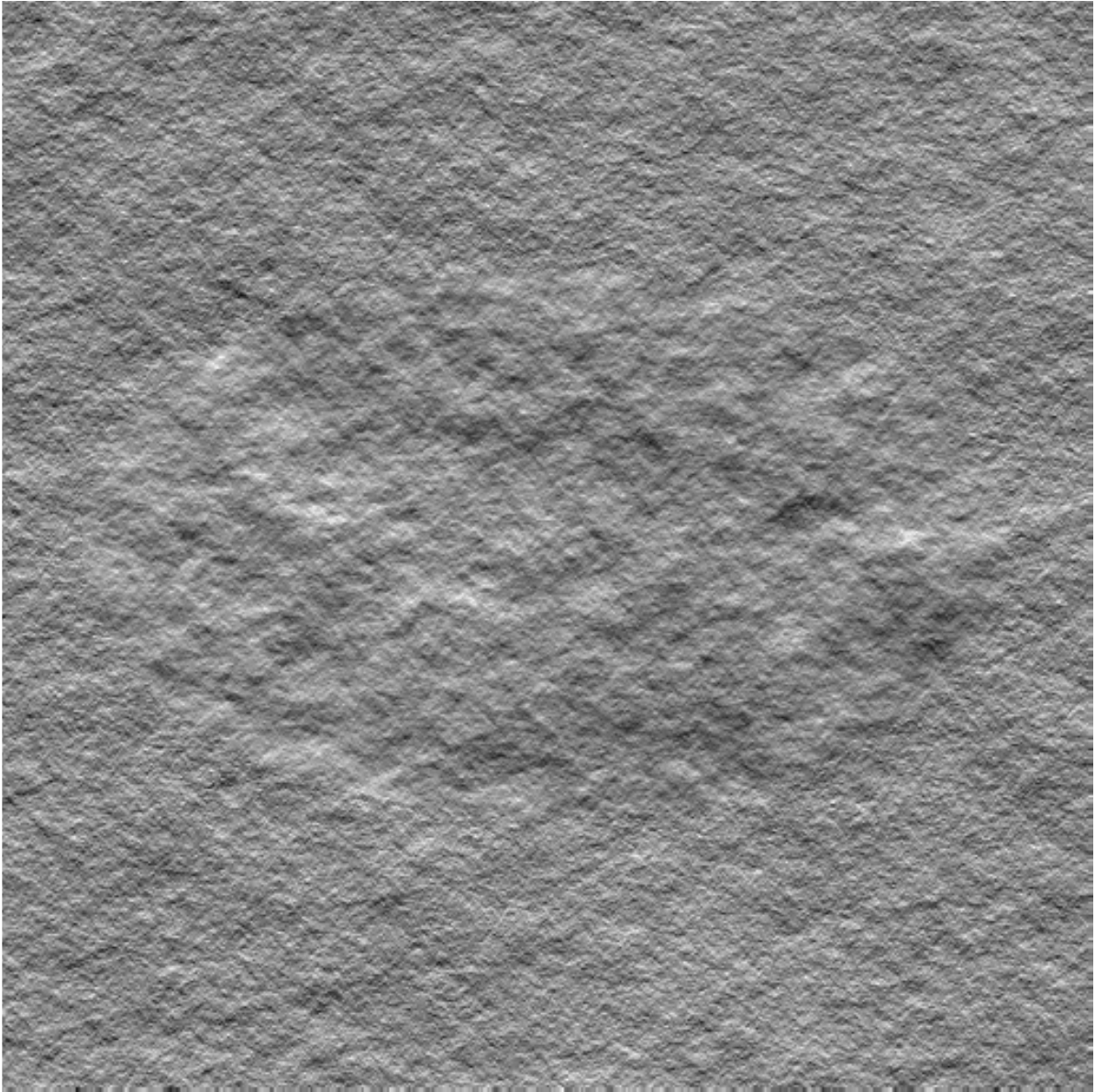} &  \includegraphics[width = 0.2\linewidth]{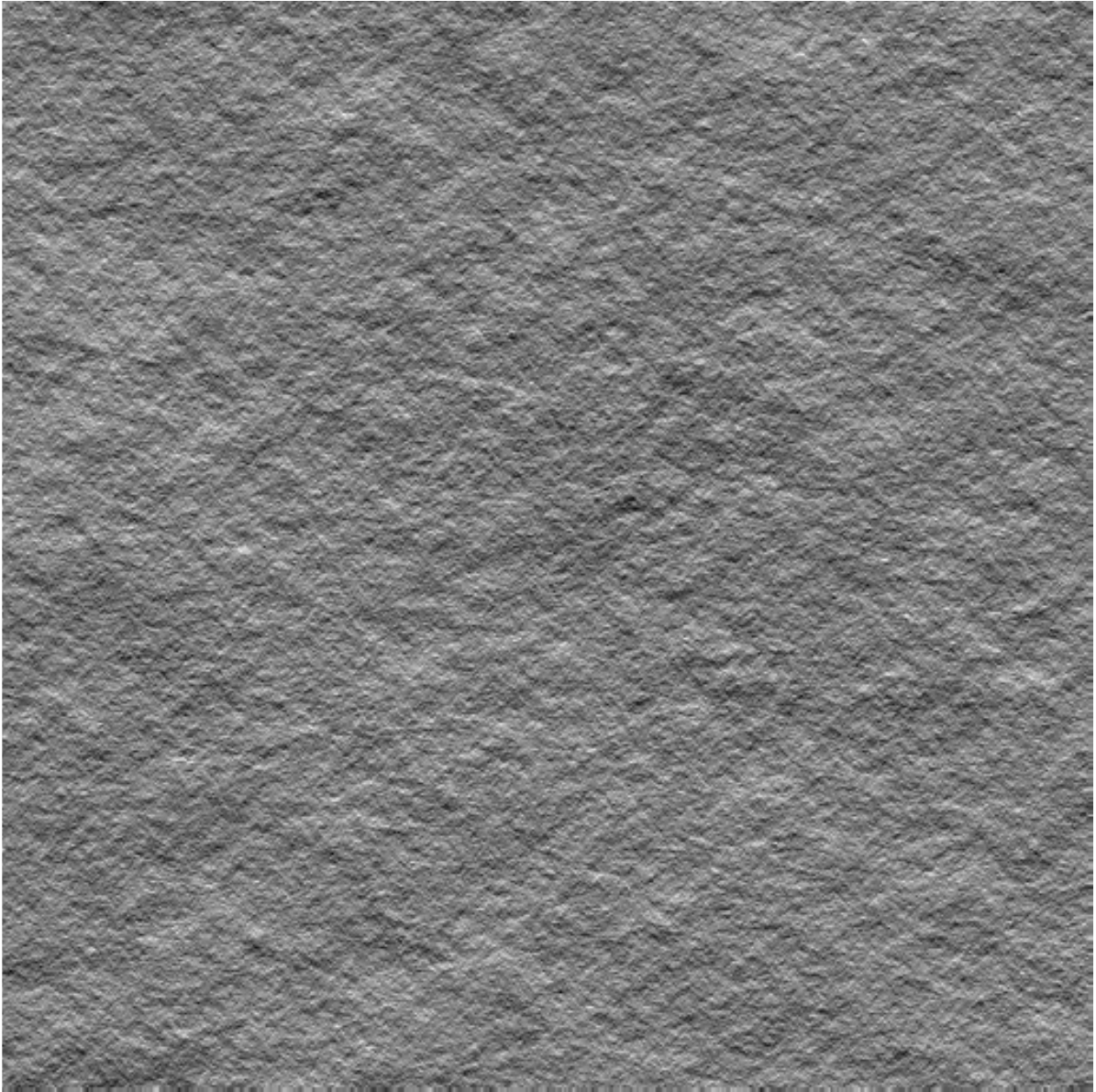}  &\includegraphics[width = 0.2\linewidth]{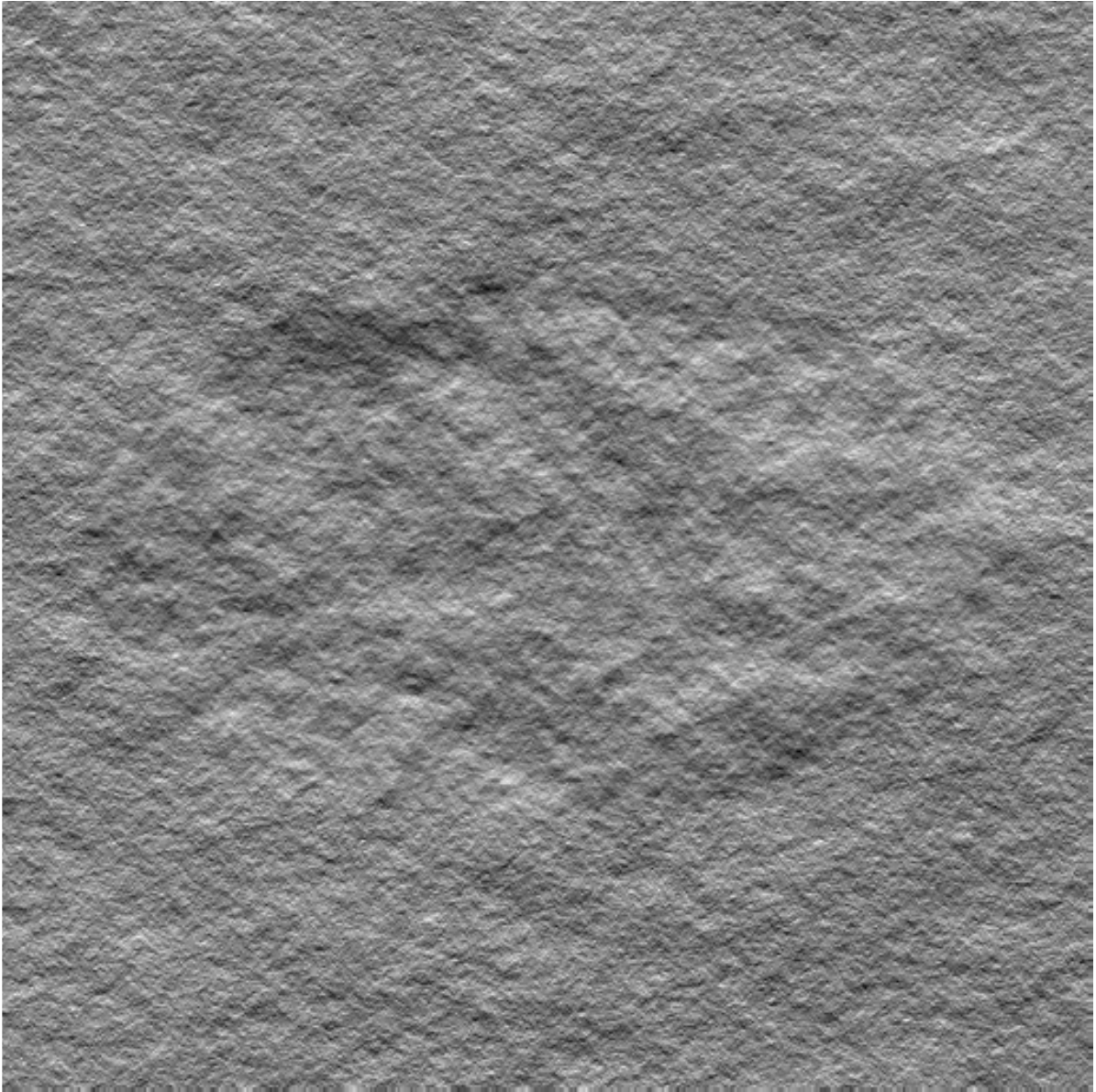} \\
\end{tabular}
\caption{{\bf Piecewise homogeneous fractal textures} $X$ generated using the mask displayed in Fig.~\ref{subfig:mask}, with parameters $(H_0, \Sigma_0)=(0.5, 0.6)$ and $(H_1,\Sigma_1))$ chosen as: (a) $(0.5, 0.6)$, no change ;  (b)  $(0.5, 0.65)$, change in  $\Sigma$ ;  (c)  $(0.8, 0.6)$ change in $H$  ; (d)  $(0.8, 0.65)$, change in $\Sigma$ and $H$. 
\label{fig:exfGf} }
\end{figure}

\section{Estimation/Segmentation performance}
\label{sec:performance}

\subsection{Monte-Carlo simulation set-up} 

The proposed \textit{joint} and \textit{coupled} segmentations are now compared in terms of performance and computational costs to the state-of-the-art T-ROF procedure, in the context of a two-region segmentation, by means of Monte-Carlo simulations.

\subsubsection{Synthetic textures}
Piecewise homogeneous fractal textures are generated as defined in Section~\ref{sec:synthesis}, using the mask shown in Fig.~\ref{subfig:mask}, with $N=512$. 
They consist of two regions: i)~a background, with variance and local regularity $\left(\Sigma^2_0, H_0\right)= (0.6,0.5)$ kept fixed, ii)~a central ellipse, for which variance and local regularity $\left(\Sigma^2_1, H_1\right)$ are varied, as illustrated in Fig.~\ref{fig:perf7config}. 
For each configuration, 5 realizations of homogeneous fractal are generated and analyzed~; performance are reported as averages over realizations.

\subsubsection{Wavelet transform} 2D-Wavelet decompositions are performed using tensor product wavelets constructed from 1D-Daubechies orthonormal least asymmetric wavelets with $N_\psi = 3$ vanishing moments~\cite{Mallat_S_1997_book_wav_tsp}.  
Wavelet leaders are computed as in Section~\ref{sec:wavleaders}.
Local estimate $\widehat{\boldsymbol{h}}_{\mathrm{LR}}$ in  T-ROF is computed as in Sec.~\ref{sec:estimation}. 
Estimates from all procedures involve octaves $(j_1,j_2) = (2,5)$. 
Octave $j_1 = 1 $ is a priori excluded as leaders are biased~\cite{Wendt2007,Wendt2009b}.

\subsubsection{Optimization algorithm parameters} To achieve best convergence of the optimization schemes, 
descent steps are chosen as large as permitted: 
\begin{itemize}
\item Alg.~\ref{alg:fistaj} ({\small FISTA$_{\mathrm{J}}$}): $\gamma = 0.99/\left(\lVert \textbf{J}^{-1} \rVert \lVert \textbf{D} \rVert^2\right)$,
\item Alg.~\ref{alg:fistac} ({\small FISTA$_{\mathrm{C}}$}): $\gamma = 0.99/\left( \max( 1, \alpha)  \, \lVert \textbf{J}^{-1} \rVert \lVert \textbf{D} \rVert^2 \right)$,
\item Alg.~\ref{alg:pdj} ({\small PD$_{\mathrm{J}}$}): $\delta_0 = \nu_0 = 0.99 /\lVert \textbf{D} \rVert$.
\item Alg.~\ref{alg:pdc} ({\small PD$_{\mathrm{C}}$}): $\delta_0 = \nu_0 = 0.99 / \left( \max(1,\sqrt{\alpha})\lVert \textbf{D} \rVert \right)$, 
\end{itemize}
where $\lVert \textbf{D} \rVert = 2\sqrt{2}$, with \textbf{D} defined in Eq.~\eqref{eq:defD}, and where $ \lVert \textbf{J}^{-1} \rVert$  depends on the octaves involved in estimation (\textbf{J} defined in Eq.~\eqref{eq:sm}):
With $ (j_1,j_2) = (2,5)$, $\lVert \textbf{J}^{-1} \rVert \simeq 2.88$.
the inertia parameter is set to $b = 4$.

\subsection{Issues in performance and algorithm comparison}

\subsubsection{Stopping criteria for proximal algorithms}

To ensure fair comparisons either between algorithms or between \emph{joint} and \emph{coupled} formulations (Pb.~\eqref{eq:jointtv}~vs. Pb.~\eqref{eq:coupledtv}), an effective stopping criterion is needed.
Fig.~\ref{fig:conv_crits} illustrates 
the convergence toward zero of two potential candidates, for the particular case, chosen as representative example, of the \emph{joint} estimation using primal-dual Alg.~\ref{alg:pdj}, and for several choices of hyperparameters. 

First, the normalized increments of objective function, as a function of the number of iterations, are observed (Fig.~\ref{fig:conv_crits}a) to decrease with an extremely irregular behavior, which makes them ill-suited to serve as effective stopping criterion.

Second, the \textit{normalized} primal-dual functional (with notations as in Sec.~\ref{subsec:gap})
\begin{align*}
\widetilde{\Gamma}^{[t]} := \frac{\Gamma^{[t]}}{\left\lvert \Theta(\boldsymbol{x}^{[t]}) + \Xi(\textbf{L}\boldsymbol{x}^{[t]}) \right\rvert + \left\lvert \Theta^*(-\textbf{L}^* \boldsymbol{y}^{[t]}) + \Xi^*(\boldsymbol{y}^{[t]}) \right\rvert},
\end{align*}
is observed (Fig.~\ref{fig:conv_crits}b) to decrease smoothly.
Systematic inspections of such decreases together with that of the corresponding achieved solutions lead us to devise a stopping criterion, which is effective both for primal-dual and forward-backward algorithms and for \emph{joint} and \emph{coupled} estimations, as,
\begin{align}
\label{eq:stop_crit}
\widetilde{\Gamma}_{\mathrm{\bullet}}^{[t]} < 5.10^{-3}, \, \mathrm{for}\, \bullet \in \lbrace \mathrm{ROF}, \, \mathrm{J} \rbrace, \, \mathrm{and} \,\widetilde{\Gamma}_{\mathrm{\mathrm{C}}}^{[t]} < 10^{-4}.
\end{align}
The use of the normalized quantities, $\widetilde{\Gamma}^{[t]}$, makes this stopping criterion robust to variations of the hyperparameters.
For practical purposes, we also impose an upper limit on the number of iterations: $t <2.5 \, 10^5$.

\begin{figure}[h!]
\centering
 \includegraphics[width = 0.75\linewidth]{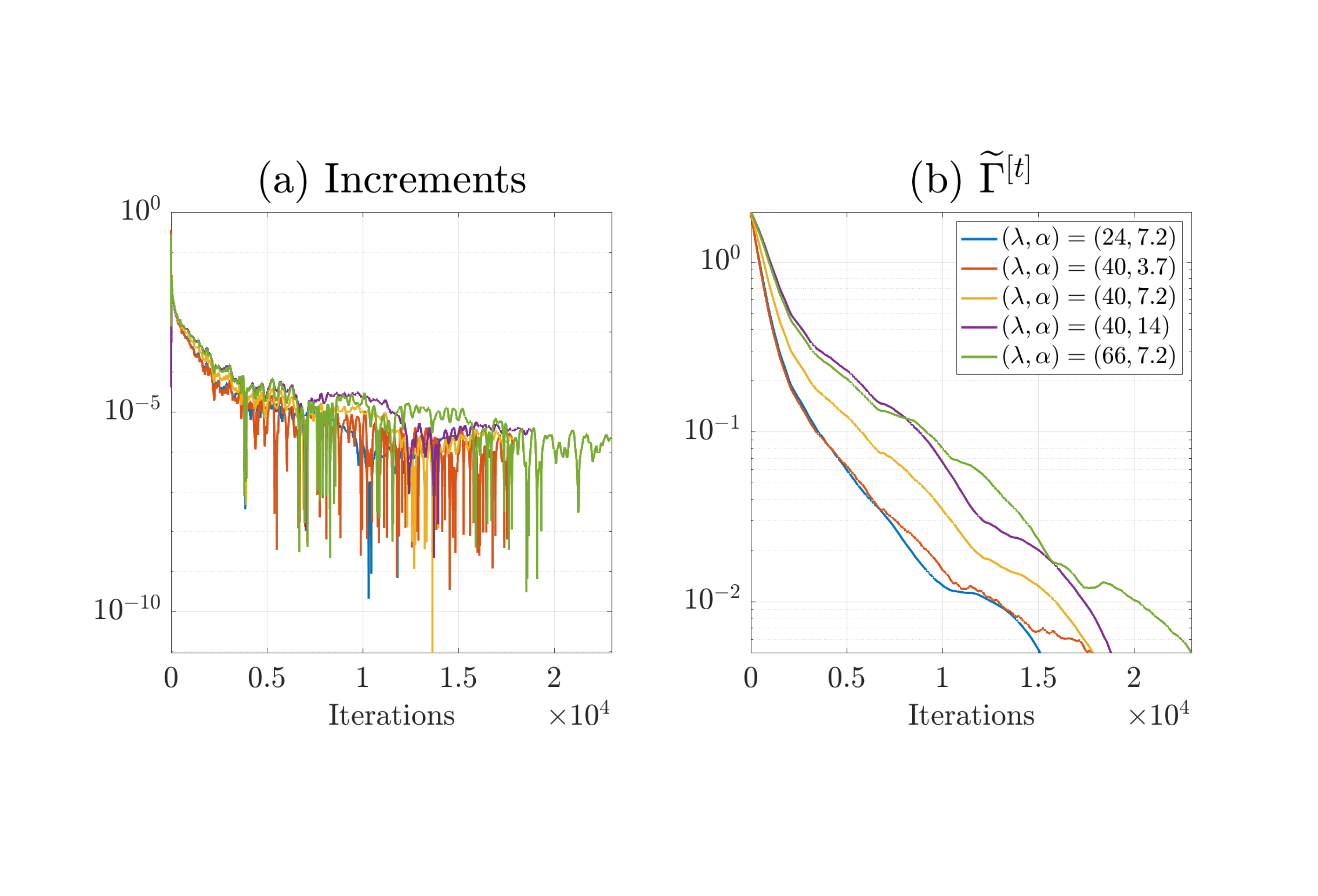}
\caption{\label{fig:conv_crits} \textbf{Stopping criteria for proximal algorithms.} Convergence of Alg.~\ref{alg:pdj} solving \textit{joint} problem~\eqref{eq:jointtv} for five pairs of hyperparameters $(\lambda, \alpha)$ evaluated with: (a)~(Normalized) increments of objective function, 
(b)~Normalized primal-dual functional $\widetilde{\Gamma}_{\mathrm{J}}^{[t]}$.}
\end{figure}

\subsubsection{Choice of regularization hyperparameters}  

The choice of regularization parameters ($\lambda$, $\alpha$) appearing in Pb~\eqref{eq:tvsep},~\eqref{eq:jointtv},~\eqref{eq:coupledtv} is of prime importance as $\lambda$ tunes the trade-off between fidelity to the fractal model~\eqref{eq:model} and expected piecewise constancy, while $\alpha$ controls the relative weight given to local wavelet log-variance $\boldsymbol{v}$ compared to local regularity $\boldsymbol{h}$, in the (\textit{joint}~\eqref{eq:deftvj} and \textit{coupled}~\eqref{eq:deftvc}) total variation penalization.
The automated choice of the regularization parameters is a difficult issue, beyond the scope of the present work. 
In this study, a grid search strategy is used to find the parameters $\lambda$ and $\alpha$ achieving the best segmentation.
In practice,  logarithmically spaced ranges are used, from $10^{-1}$ to $10^3$ for $\lambda$  and from $10^{-2}$ to $10^3$ for $\alpha$.

\subsubsection{Performance assessment}  

A natural performance criterion consists in comparing the achieved classification, denoted $\widehat{\boldsymbol{M}}_{\mathrm{ROF}}$, $\widehat{\boldsymbol{M}}_{\mathrm{J}}$ and $\widehat{\boldsymbol{M}}_{\mathrm{C}}$ respectively for the three segmentation procedures compared here, (ROF, \textit{joint} and \textit{coupled}), to the mask in Fig~\ref{subfig:mask}, regarded as ground truth.
It leads to define the \textit{classification score} as the percentage of correctly labeled pixels. 
Classification scores for $\widehat{\boldsymbol{M}}_{\mathrm{ROF}}$, $\widehat{\boldsymbol{M}}_{\mathrm{J}}$ and $\widehat{\boldsymbol{M}}_{\mathrm{C}}$, applied to different configurations of  piecewise fractal textures are reported in Table~\ref{tab:perfs}, together with the difference between the a posteriori global estimates obtained for each segmented regions $\Upsilon_0$ and $\Upsilon_1$ (cf. Section~\ref{sec:posterior}):
\begin{align*}
\widehat{\Delta H}_{\bullet} := \widehat{H}_{1,\bullet} - \widehat{H}_{0,\bullet}, \, \mathrm{for} \,  \bullet \in \lbrace \mathrm{ROF}, \, \mathrm{J} , \, \mathrm{C} \rbrace.
\end{align*}

\subsection{Performance comparisons}

\subsubsection{Segmentation and estimation performance}

Fig.~\ref{fig:perf7config} and Table~\ref{tab:perfs} report segmentation and estimation performance for 7 different configurations and for the optimal set of hyperparameters (i.e., those that maximize the classification scores).

Configurations~I, III, V, VI correspond to a decrease in the difference between the regularity of each region of the piecewise fractal texture: $\Delta H = H_1-H_0$, (hence to an increase in difficulty) for a fixed $\Delta \Sigma^2 = 0.1$. 
While the segmentation performance of the three procedures (T-ROF,  T-\textit{joint} and T-\textit{coupled}) are comparable for \emph{easy} configuration, those of T-ROF decrease drastically when $\Delta H$ decreases while those of  T-\textit{joint} and T-\textit{coupled} decrease significantly less.
Along the same line, the estimation of $\Delta H $ remains more satisfactory at small $\Delta H$ for T-\textit{joint} and T-\textit{coupled} than for T-ROF. 
It can also be observed that the performance of T-\textit{coupled} degrade slightly less than those T-\textit{joint}. 

Configurations~II, III, IV correspond to a decrease in variance, $\Delta \Sigma^2$, (hence to an increase in difficulty) for a fixed $\Delta H = 0.1$ which can already be regarded as a difficult case. 
As expected, T-ROF is not helped by the increase of variance between IV and II as
estimation of local regularity does not depend on variance~\cite{Wendt2009b,pustelnik_combining_2016}, and T-ROF segmentation results are not satisfactory. 
It can also be observed that performance of T-\textit{joint} and T-\textit{coupled} improve when $\Delta \Sigma^2$ increase, and again that the improvement is slightly larger for  T-\textit{coupled} than for  T-\textit{joint}.

These results permit to draw two clear conclusions. 
First, there are quantifiable benefits in using the side information brought by $\Delta \Sigma^2$, notably when the changes in regularity become small (low $\Delta H$): T-\textit{joint} and T-\textit{coupled} outperform T-ROF.
Second, T-\textit{coupled}, that, by principle, favor co-localized changes in regularity and variance, shows overall better performance than T-\textit{joint}, that does not favor co-localized changes. 
This is a satisfactory outcome as all the configurations chosen follow the a priori intuition, relevant for real-world applications, that changes of textures naturally imply co-localized changes in local variance and local regularity. 

\begin{figure}
\resizebox{0.15\textwidth}{!}{\begin{subfigure}{0.2\linewidth}
\includegraphics[width = \linewidth]{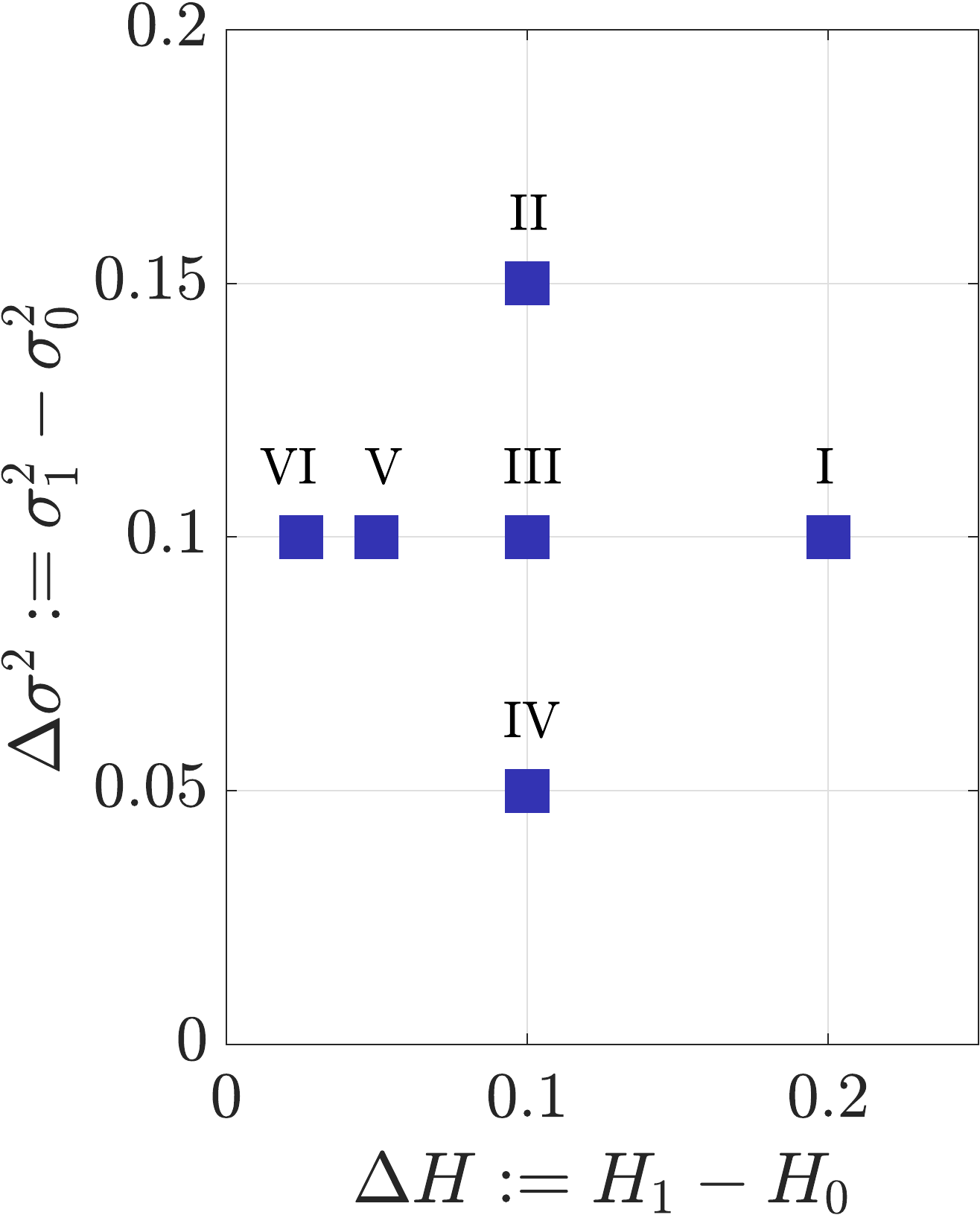}
\subcaption{\label{fig:explored_DS_DH}Explored $\Delta \Sigma^2$ and $\Delta H$.}
\end{subfigure}}\hspace{3mm}
\resizebox{0.57\textwidth}{!}{\begin{subfigure}{0.77\linewidth}
\centering
\begin{footnotesize}
\begin{tabular}{ccccccc}
 & I & II & III & IV & V & VI \\
\hline
\noalign{\vskip 1mm}   
 & \begin{minipage}[c]{16mm}
$\Delta \Sigma^2 = 0.1$\\
$\Delta H = 0.2$
\end{minipage} & \begin{minipage}[c]{17mm}
$\Delta \Sigma^2 = 0.15$\\
$\Delta H = 0.1$
\end{minipage}  & \begin{minipage}[c]{17mm}
$\Delta \Sigma^2 = 0.1$\\
$\Delta H = 0.1$
\end{minipage} & \begin{minipage}[c]{17mm}
$\Delta \Sigma^2 = 0.05$\\
$\Delta H = 0.1$
\end{minipage} & \begin{minipage}[c]{17mm}
$\Delta \Sigma^2 = 0.1$\\
$\Delta H = 0.05$
\end{minipage} & \begin{minipage}[c]{17mm}
$\Delta \Sigma^2 = 0.1$\\
$\Delta H = 0.025$
\end{minipage} \\
\noalign{\vskip 1mm}   
\hline 
\noalign{\vskip 1mm}   
\vspace{1mm}
\rotatebox{90}{Texture}
& \raisebox{-4mm}{\includegraphics[width = 17mm]{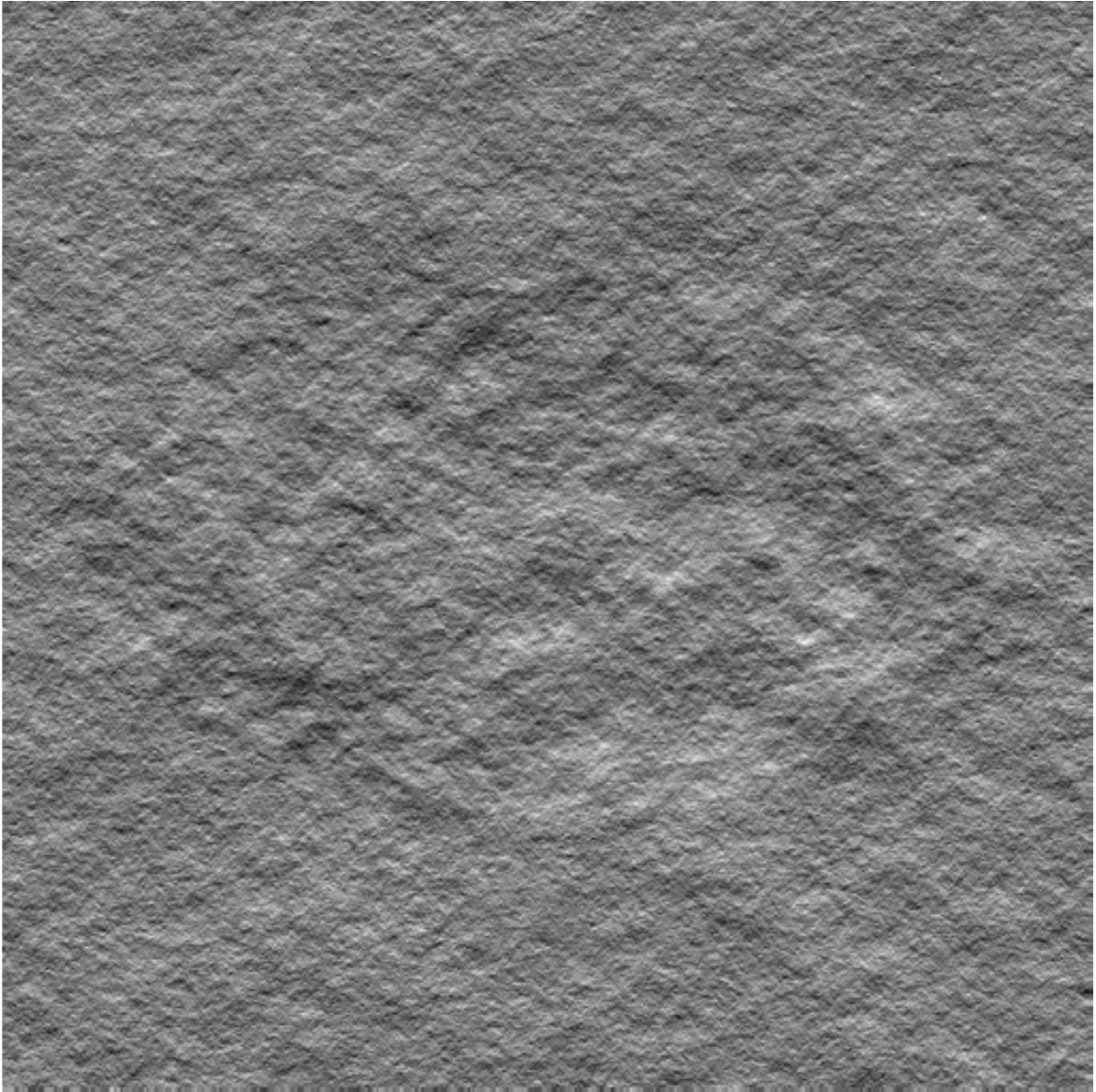}} 
& \raisebox{-4mm}{\includegraphics[width = 17mm]{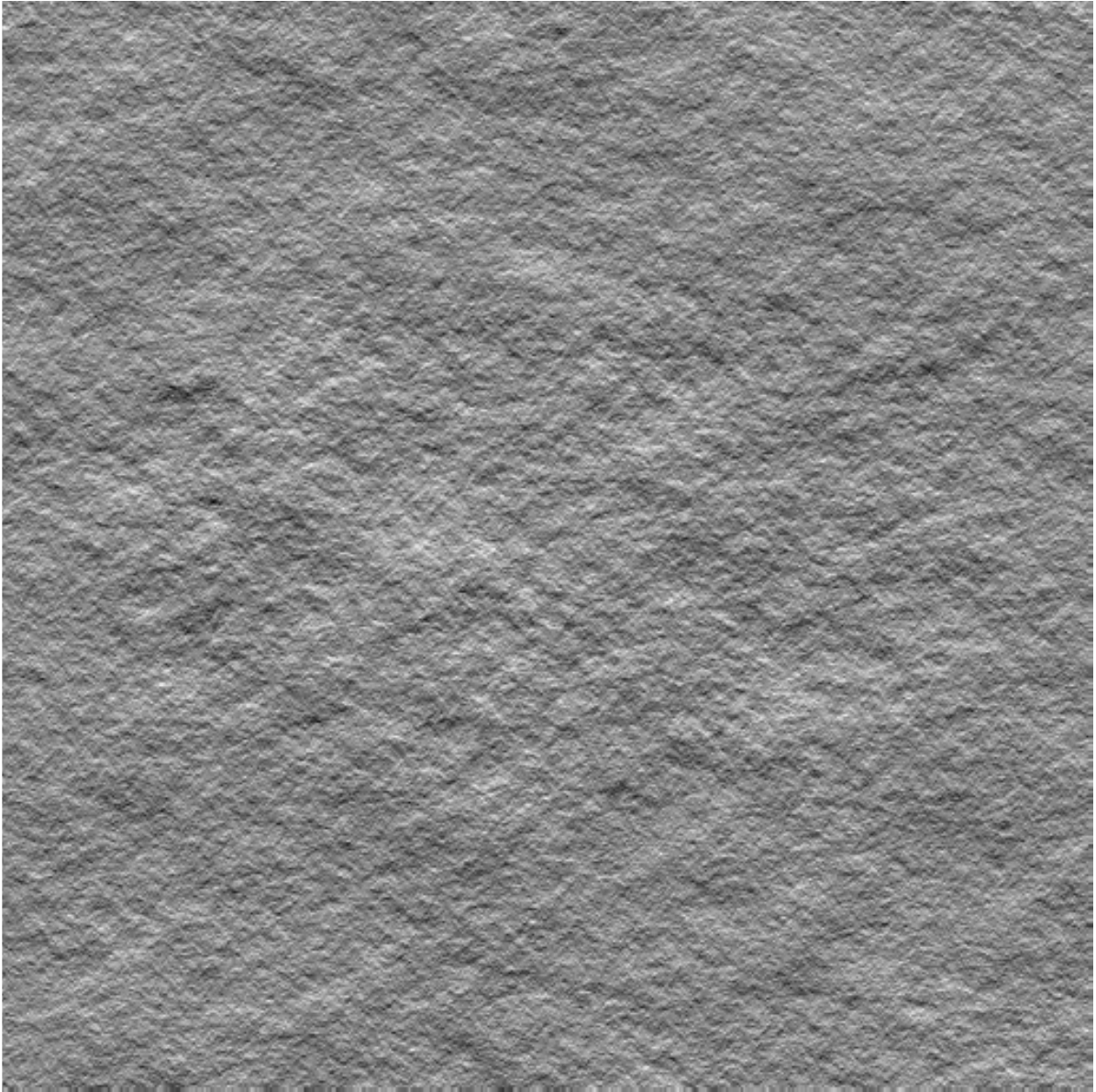}} 
& \raisebox{-4mm}{\includegraphics[width = 17mm]{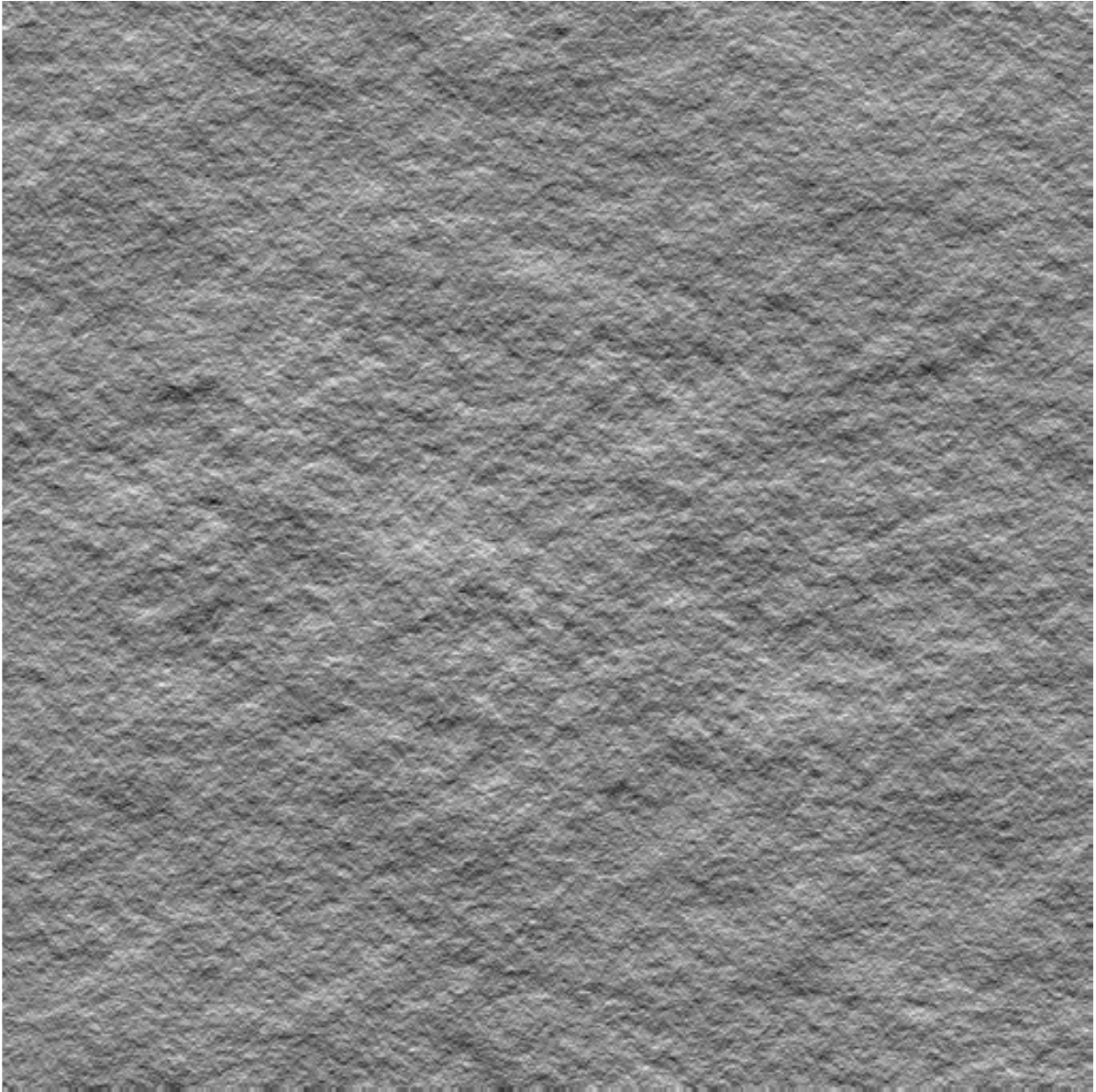}} 
& \raisebox{-4mm}{\includegraphics[width = 17mm]{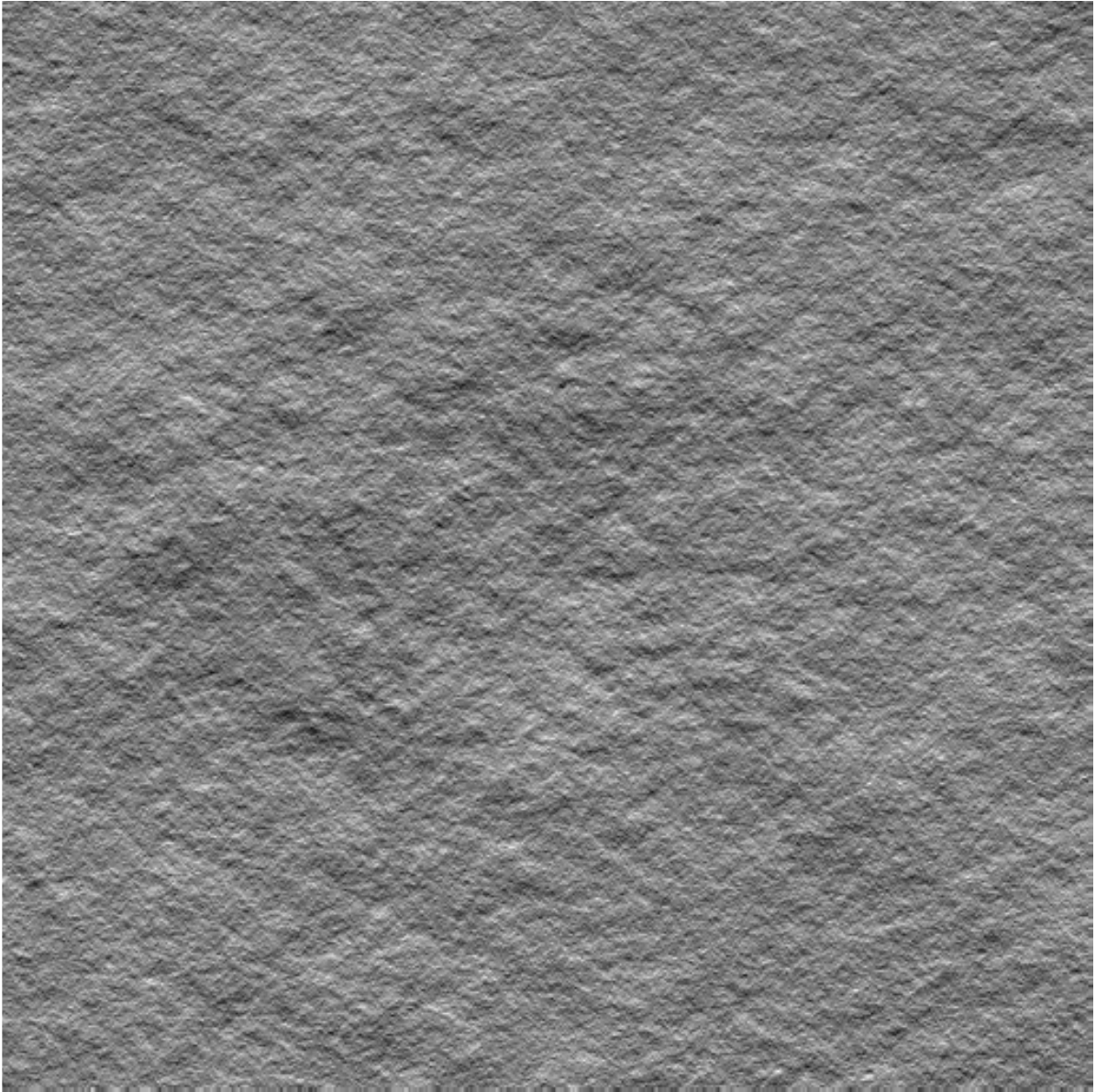}} 
& \raisebox{-4mm}{\includegraphics[width = 17mm]{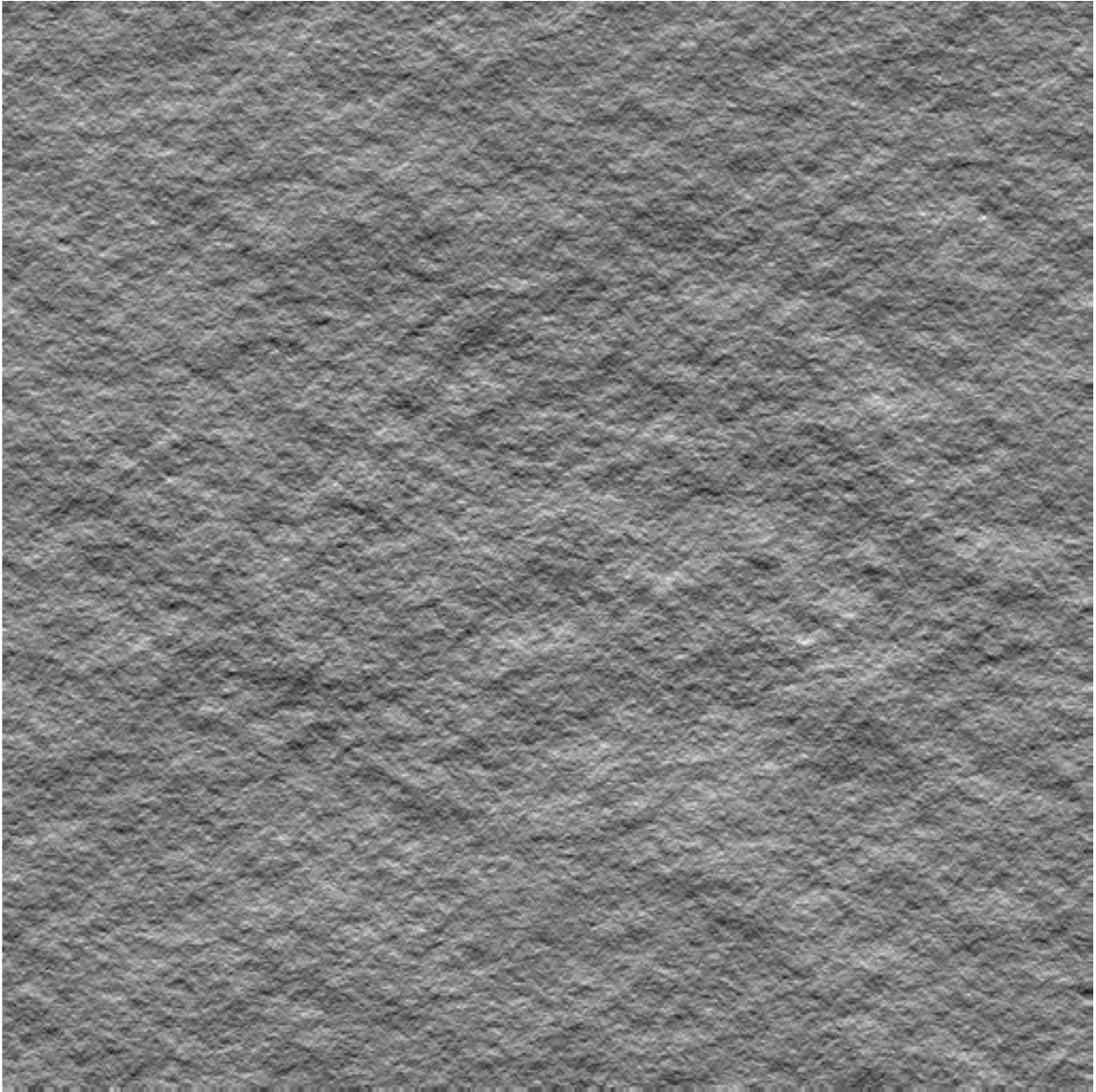}} 
& \raisebox{-4mm}{\includegraphics[width = 17mm]{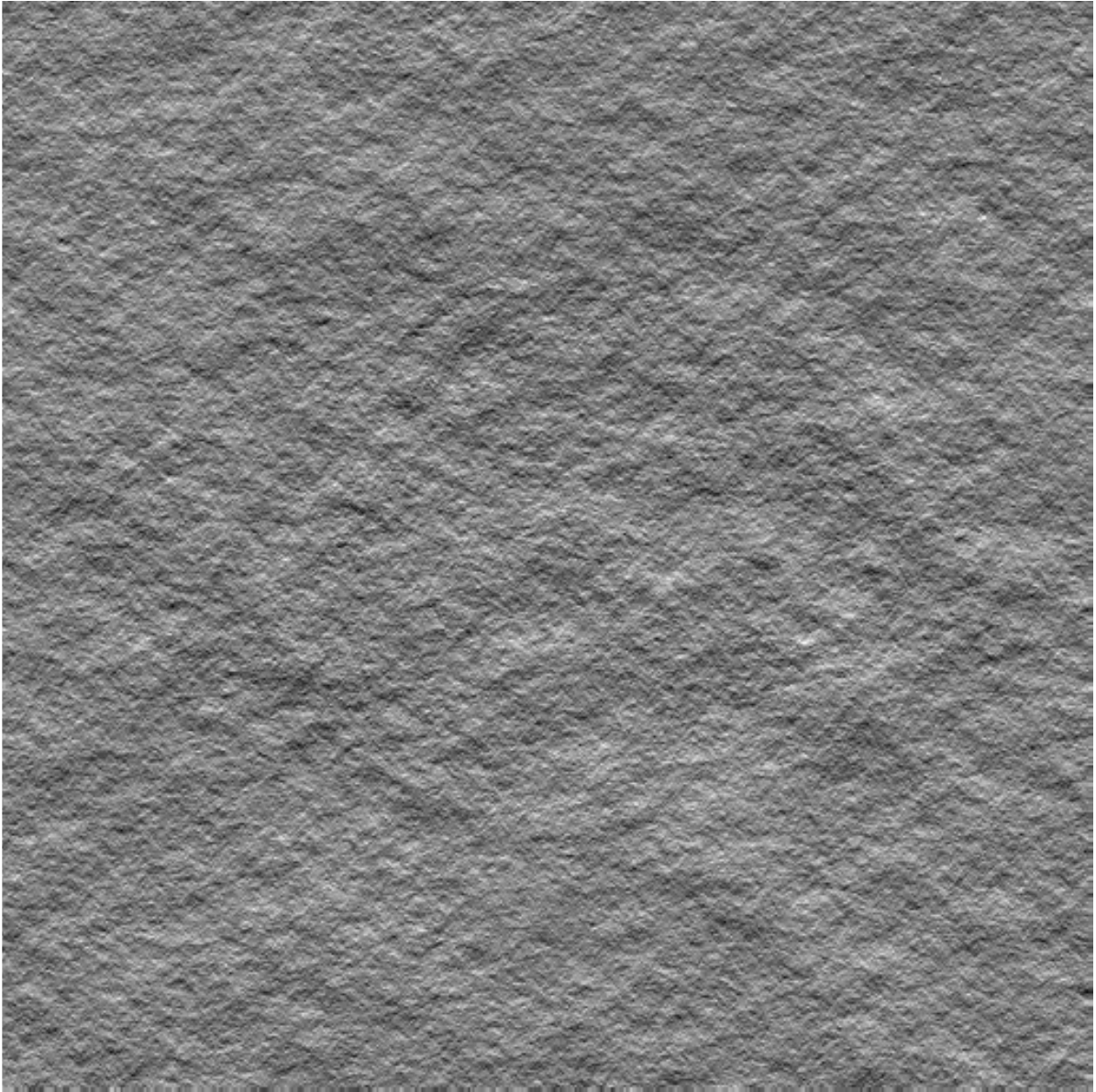}} \\
\vspace{1mm}
 \rotatebox{90}{T-ROF }
 & \raisebox{-5mm}{\includegraphics[width = 17mm]{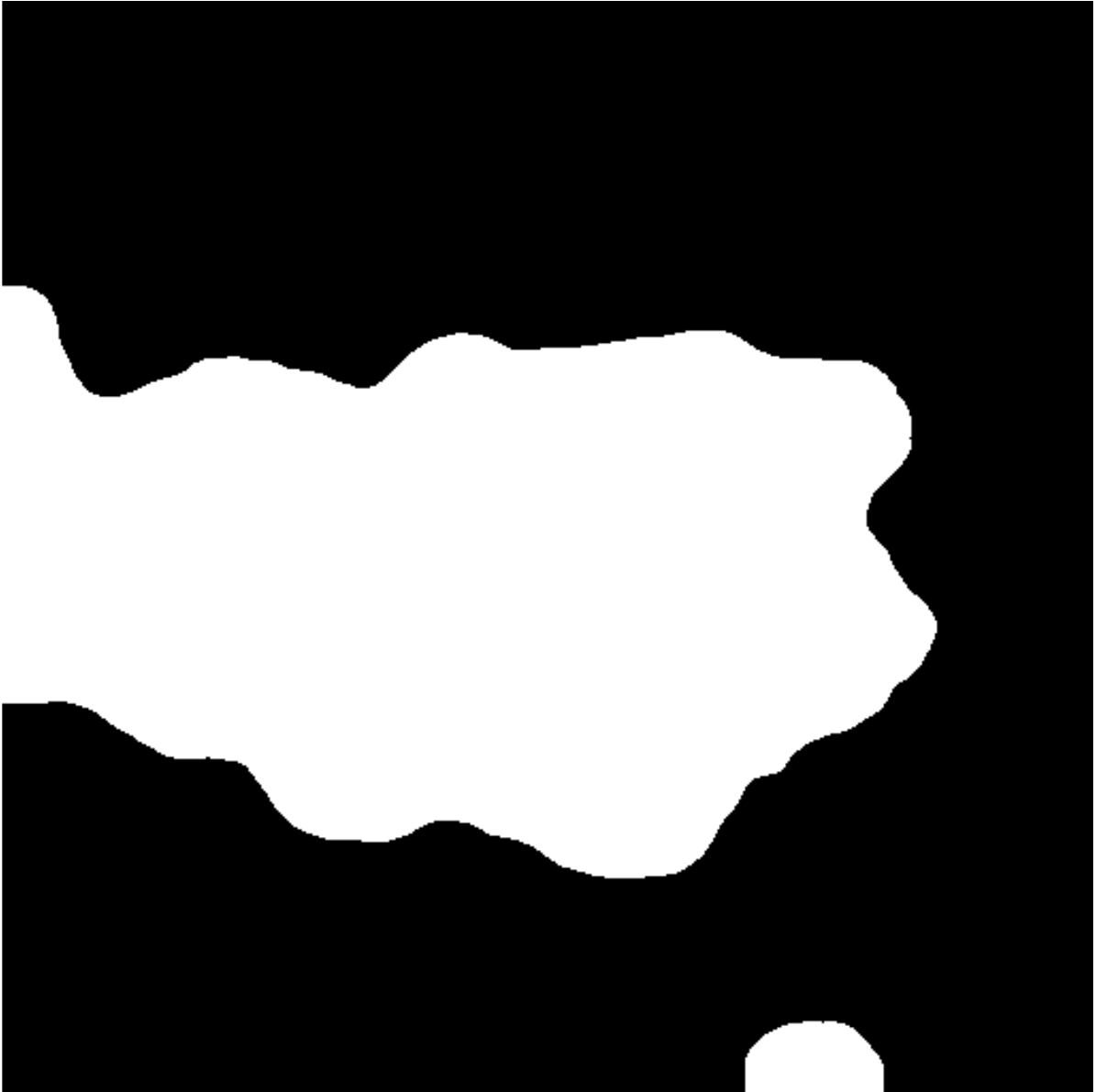}} 
 & \raisebox{-5mm}{\includegraphics[width = 17mm]{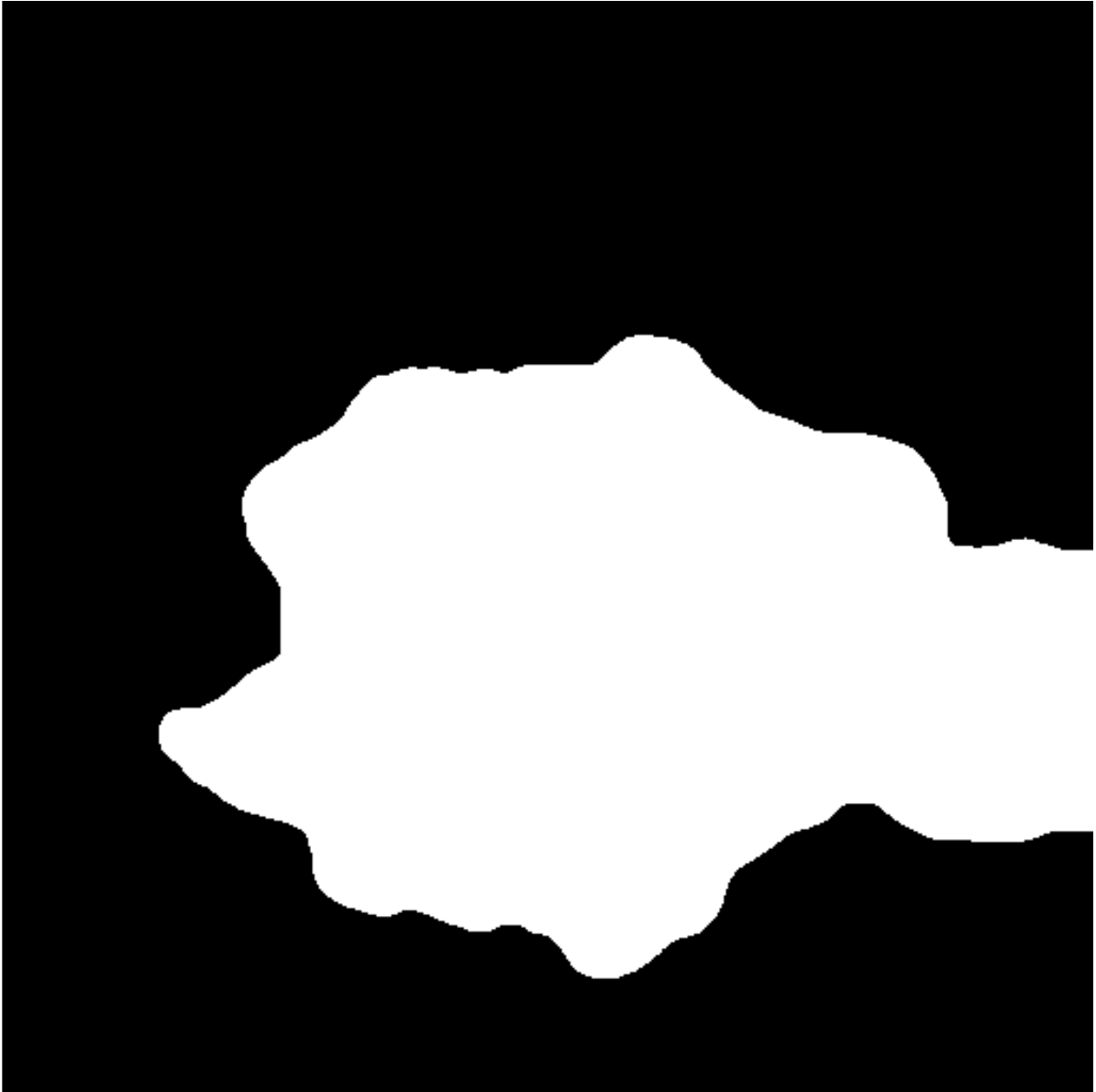}} 
 & \raisebox{-5mm}{\includegraphics[width = 17mm]{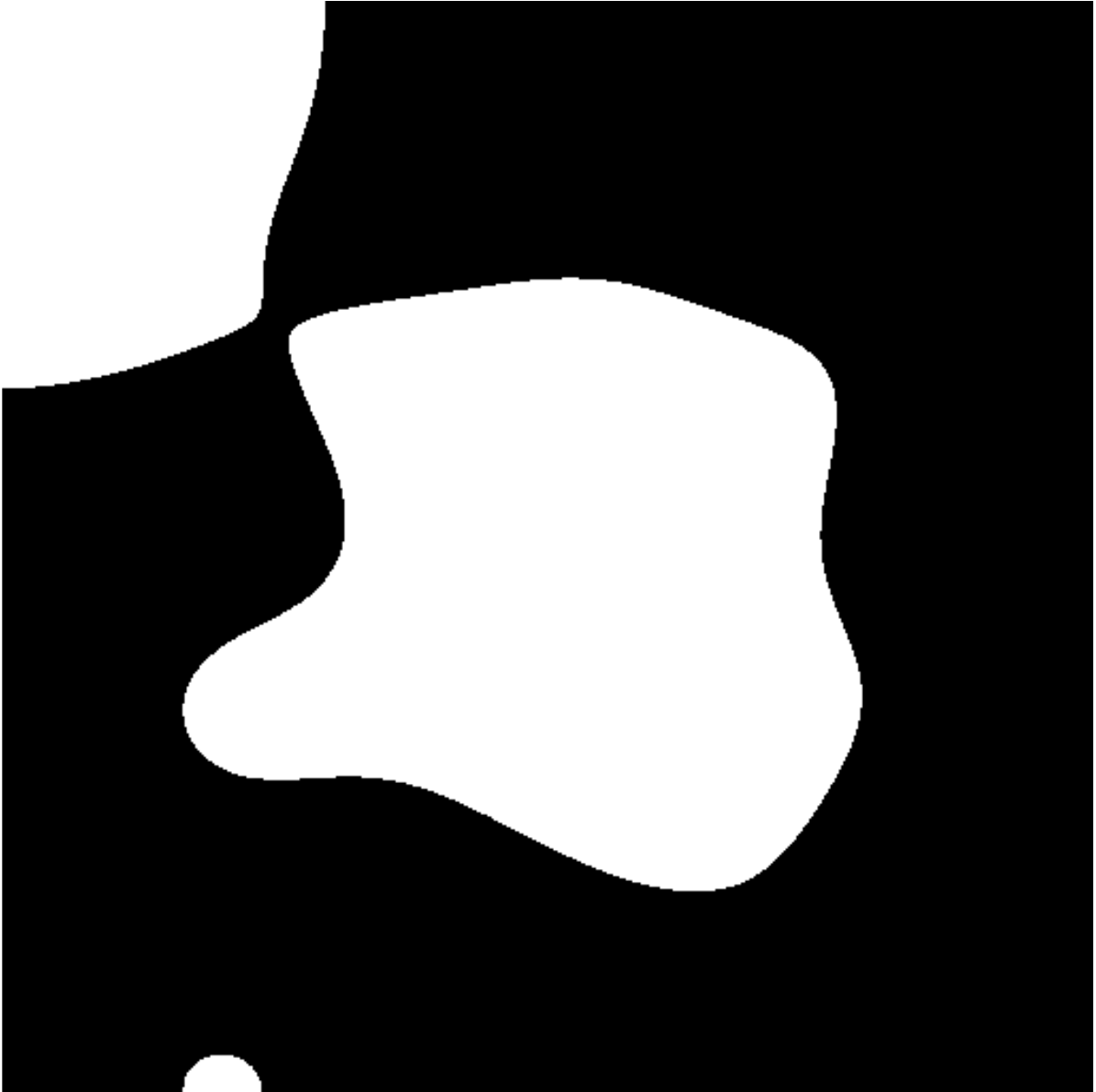}} 
 & \raisebox{-5mm}{\includegraphics[width = 17mm]{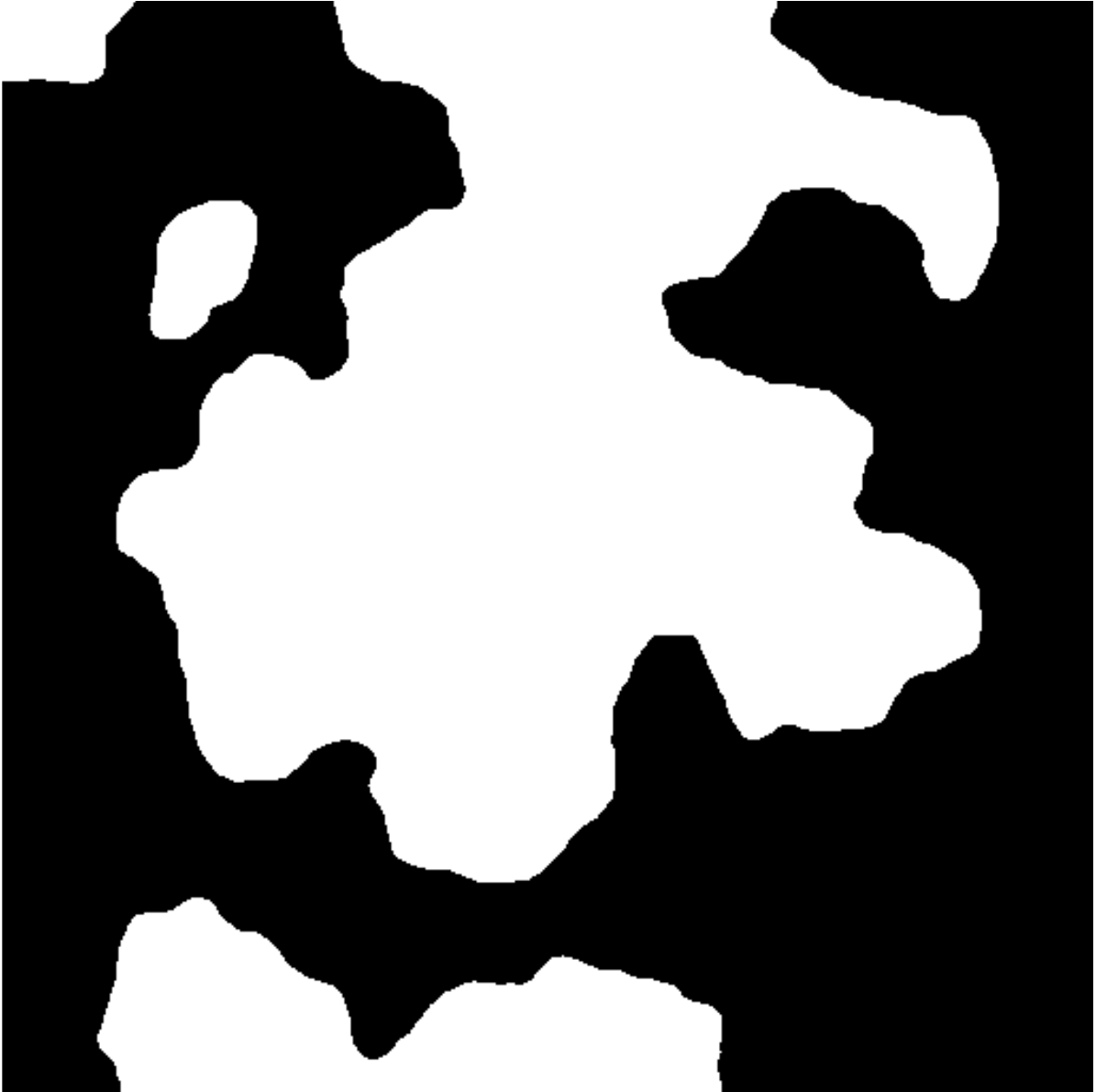}} 
 & \raisebox{-5mm}{\includegraphics[width = 17mm]{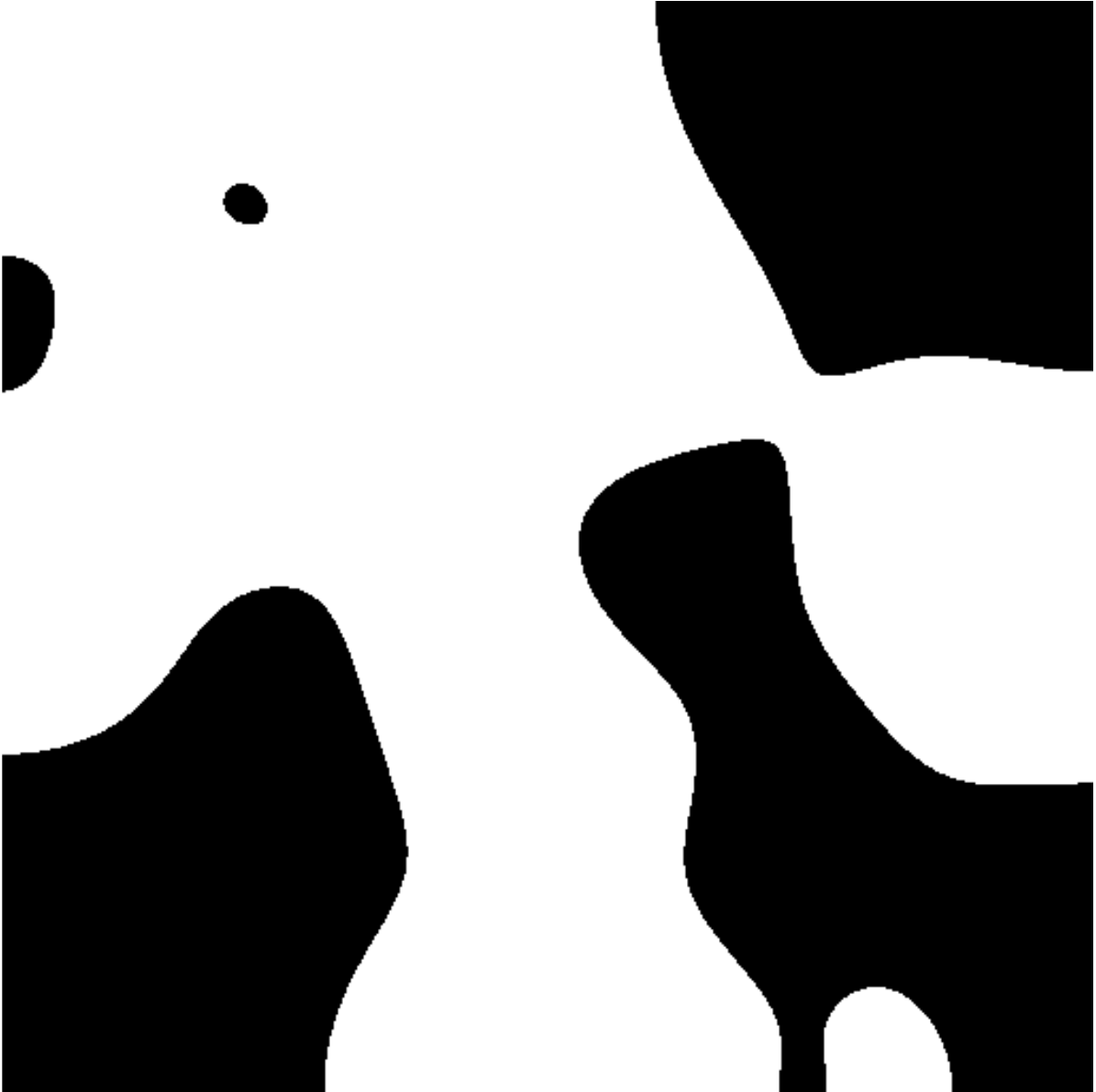}} 
 & \raisebox{-5mm}{\includegraphics[width = 17mm]{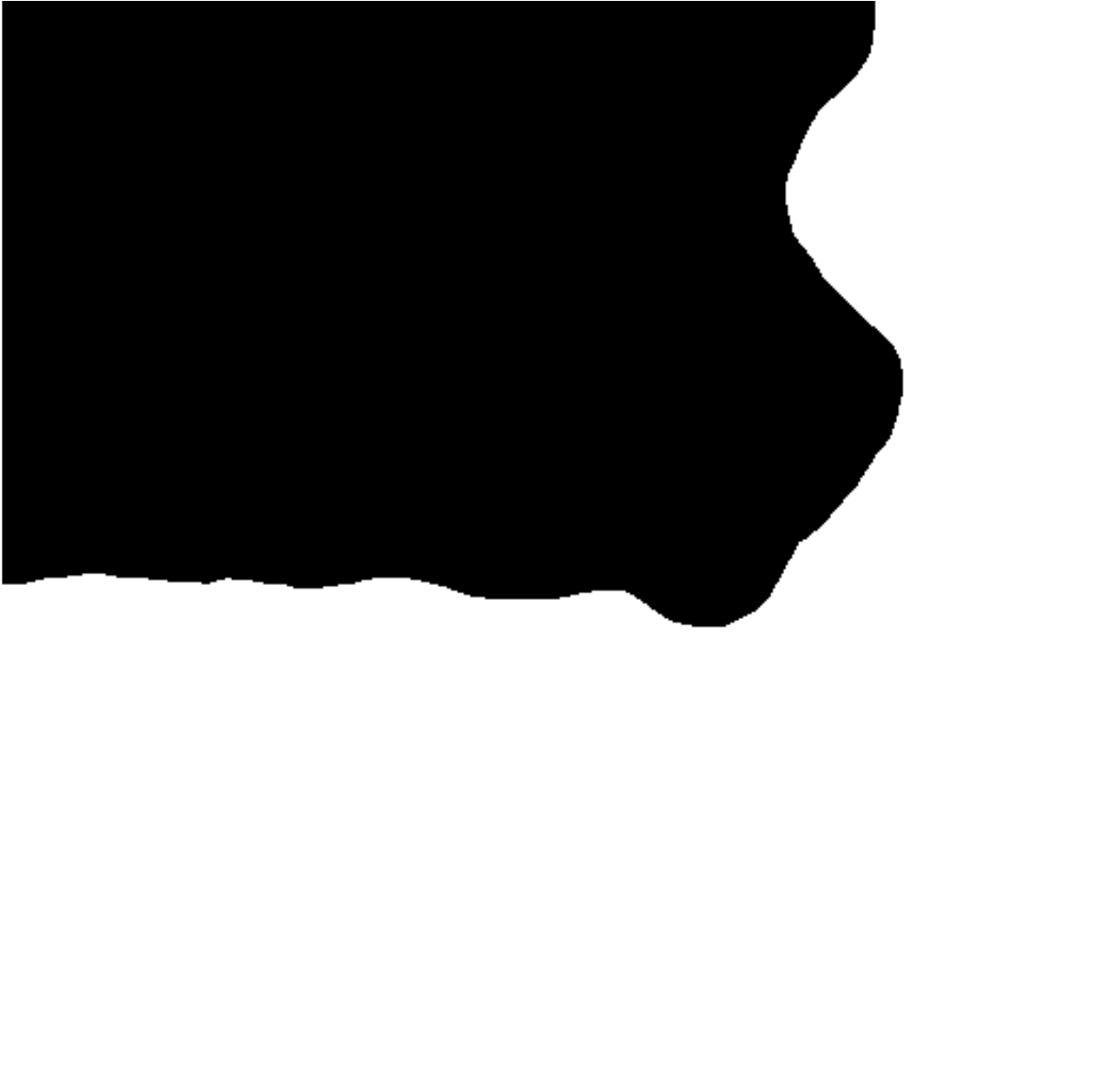}} \\
 \vspace{1mm}
\rotatebox{90}{T-\textit{joint}  }
& \raisebox{-5mm}{\includegraphics[width = 17mm]{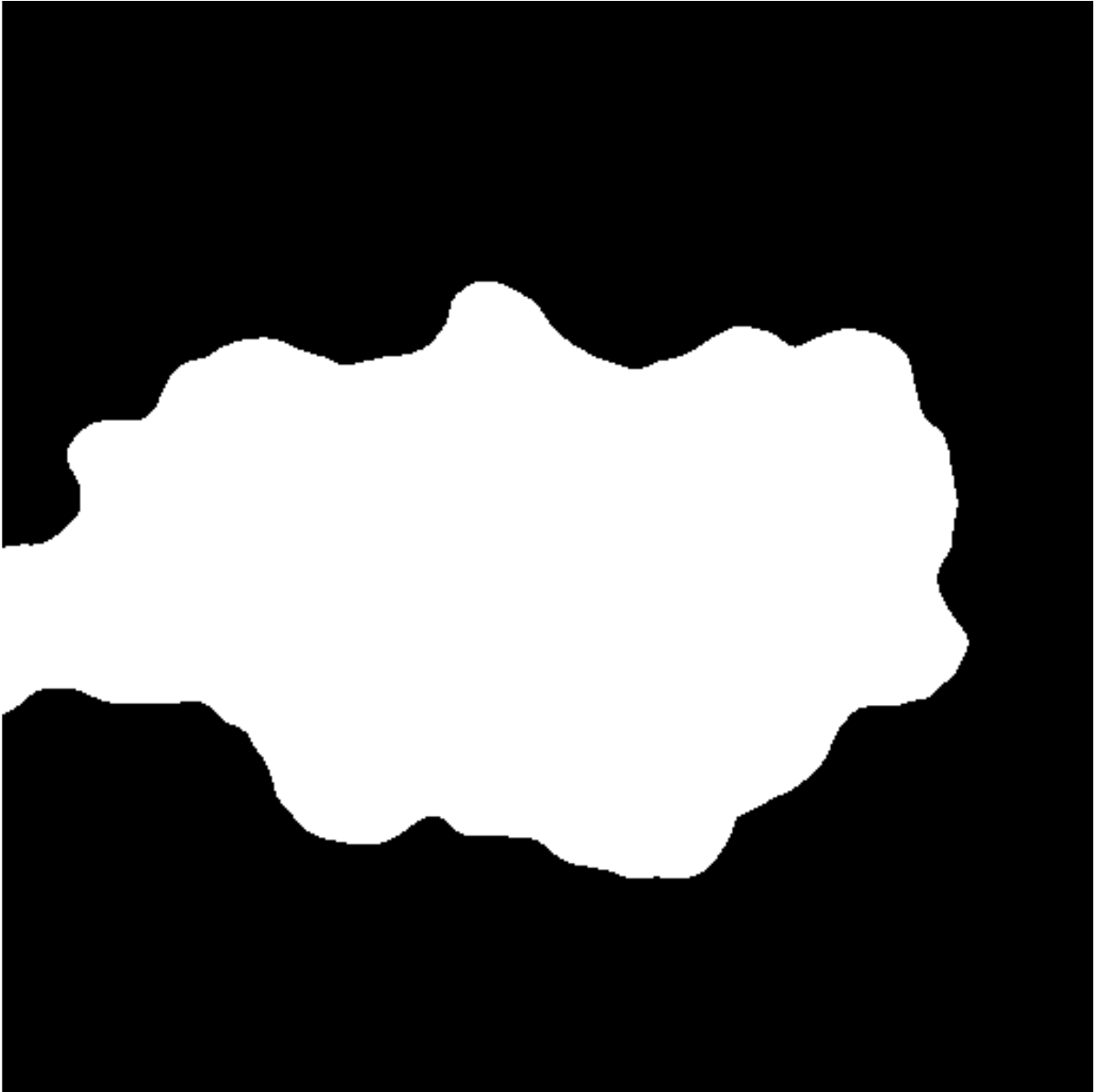}} 
& \raisebox{-5mm}{\includegraphics[width = 17mm]{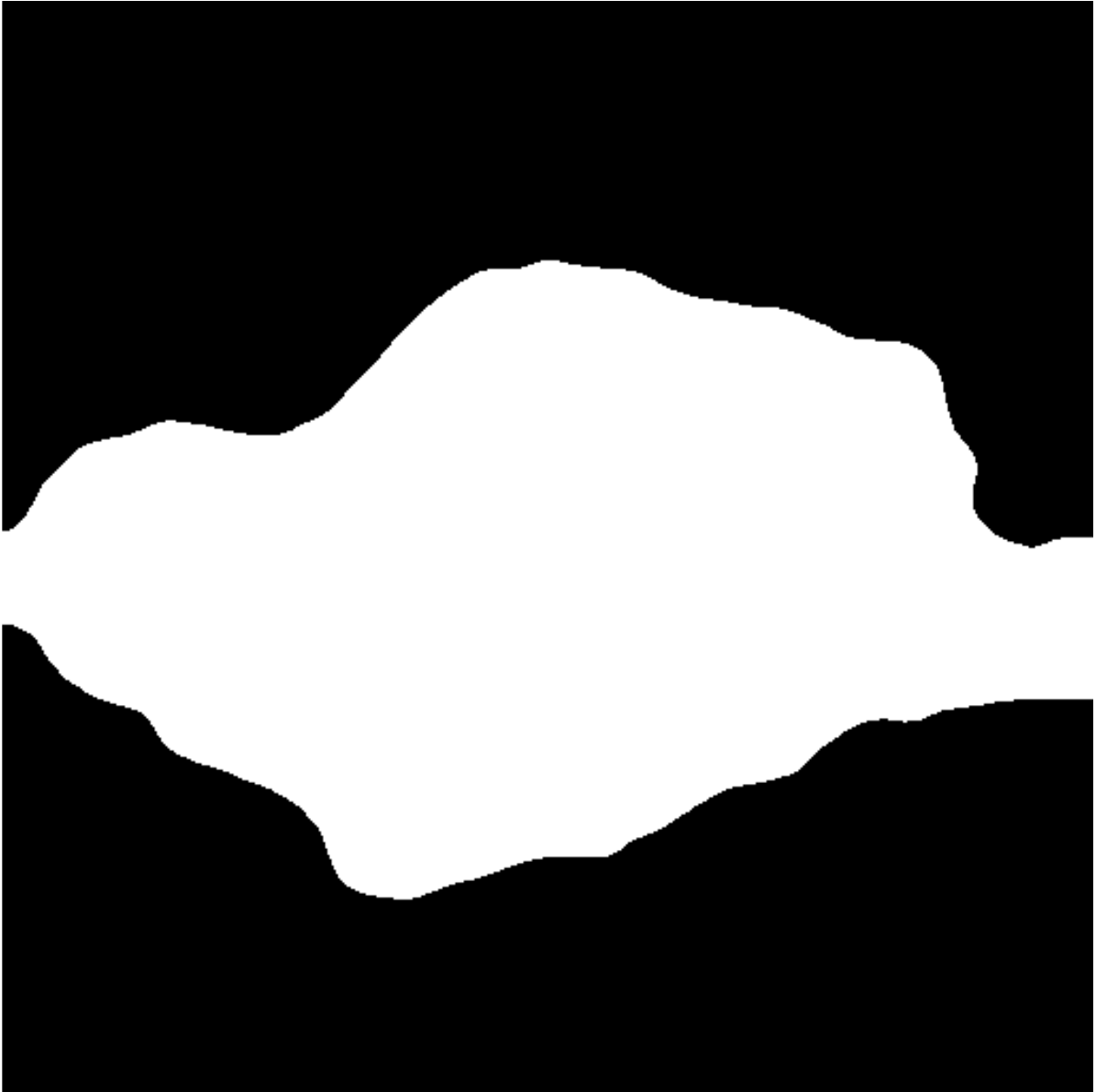}} 
& \raisebox{-5mm}{\includegraphics[width = 17mm]{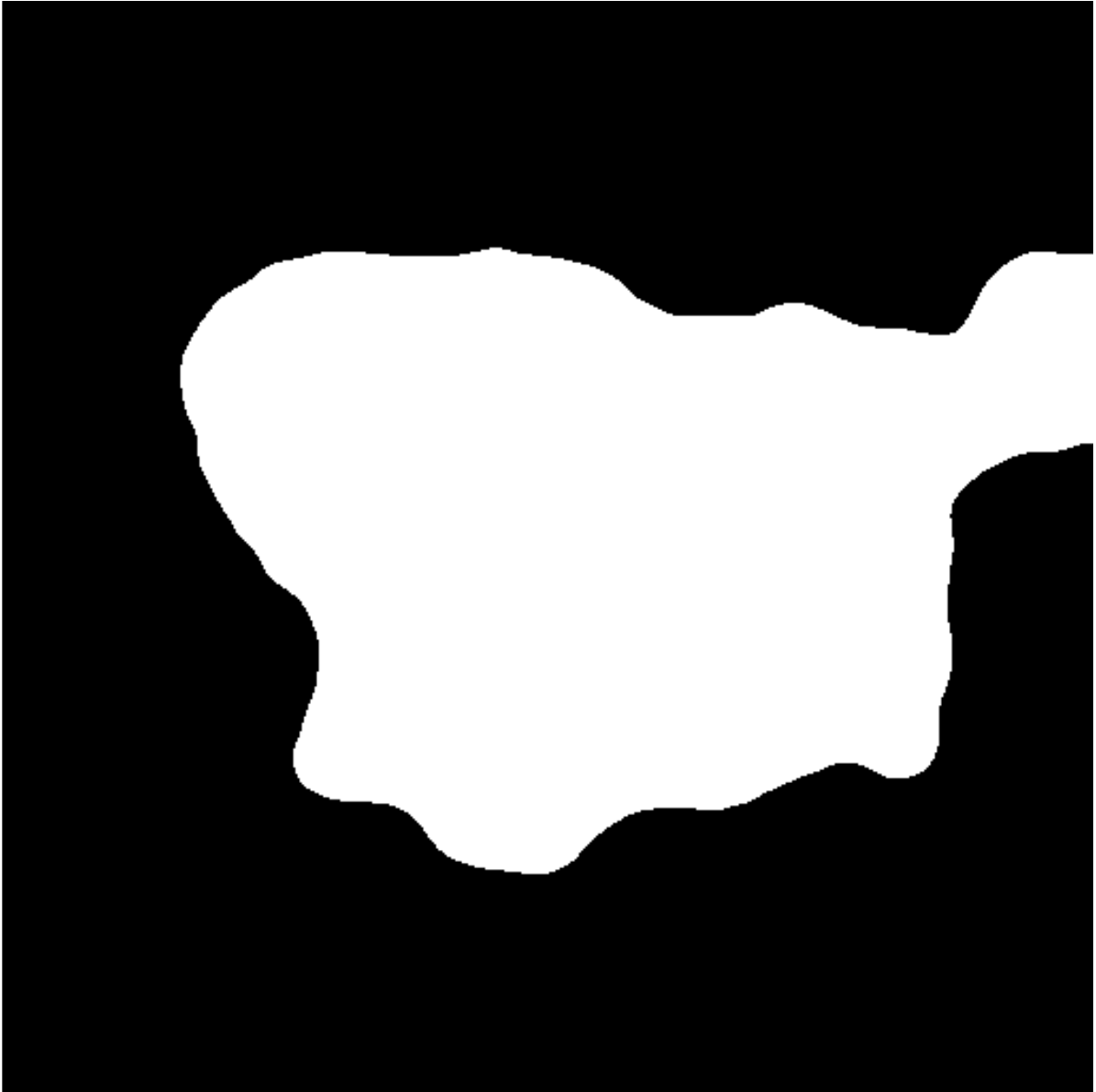}} 
& \raisebox{-5mm}{\includegraphics[width = 17mm]{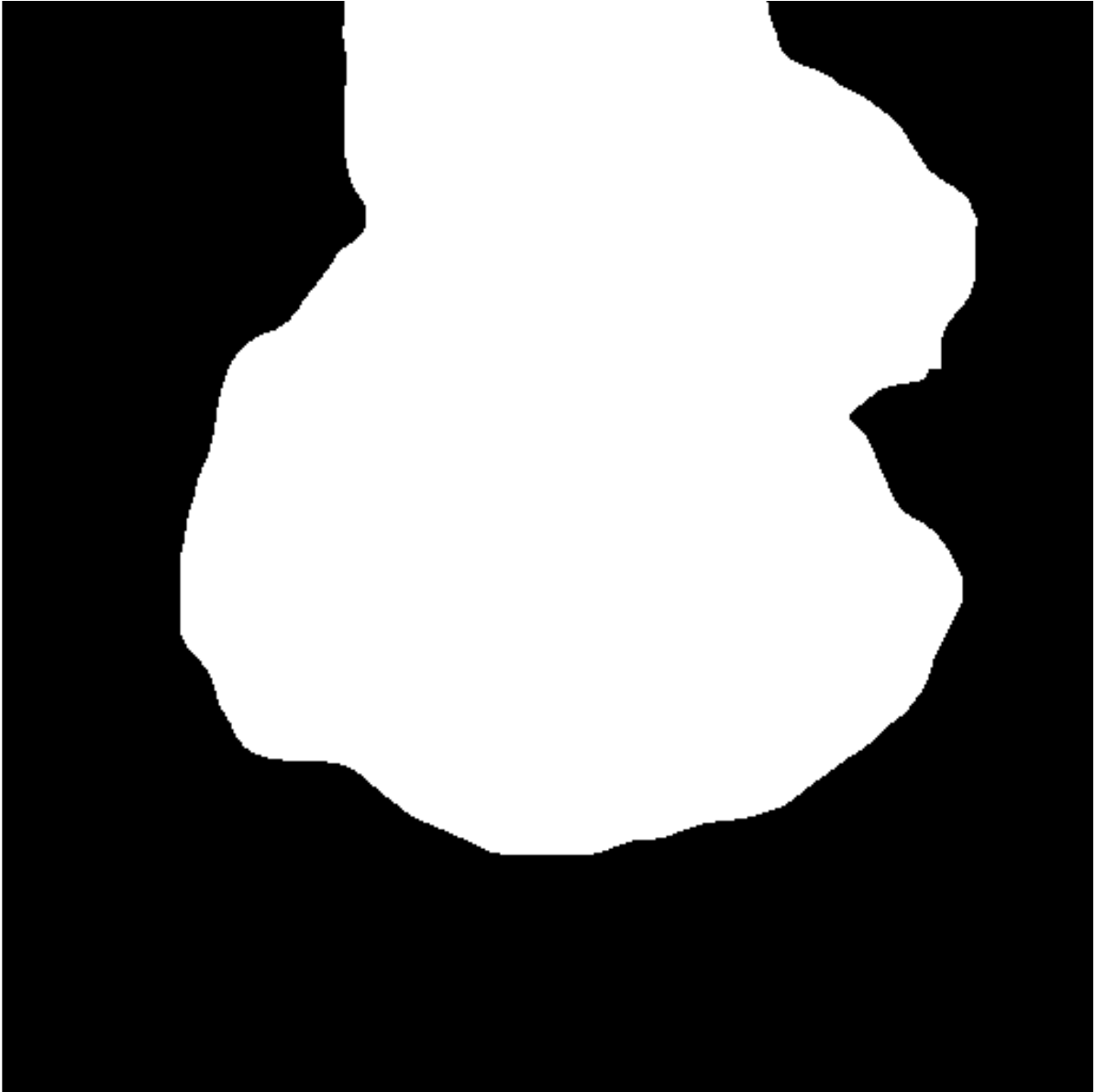}} 
& \raisebox{-5mm}{\includegraphics[width = 17mm]{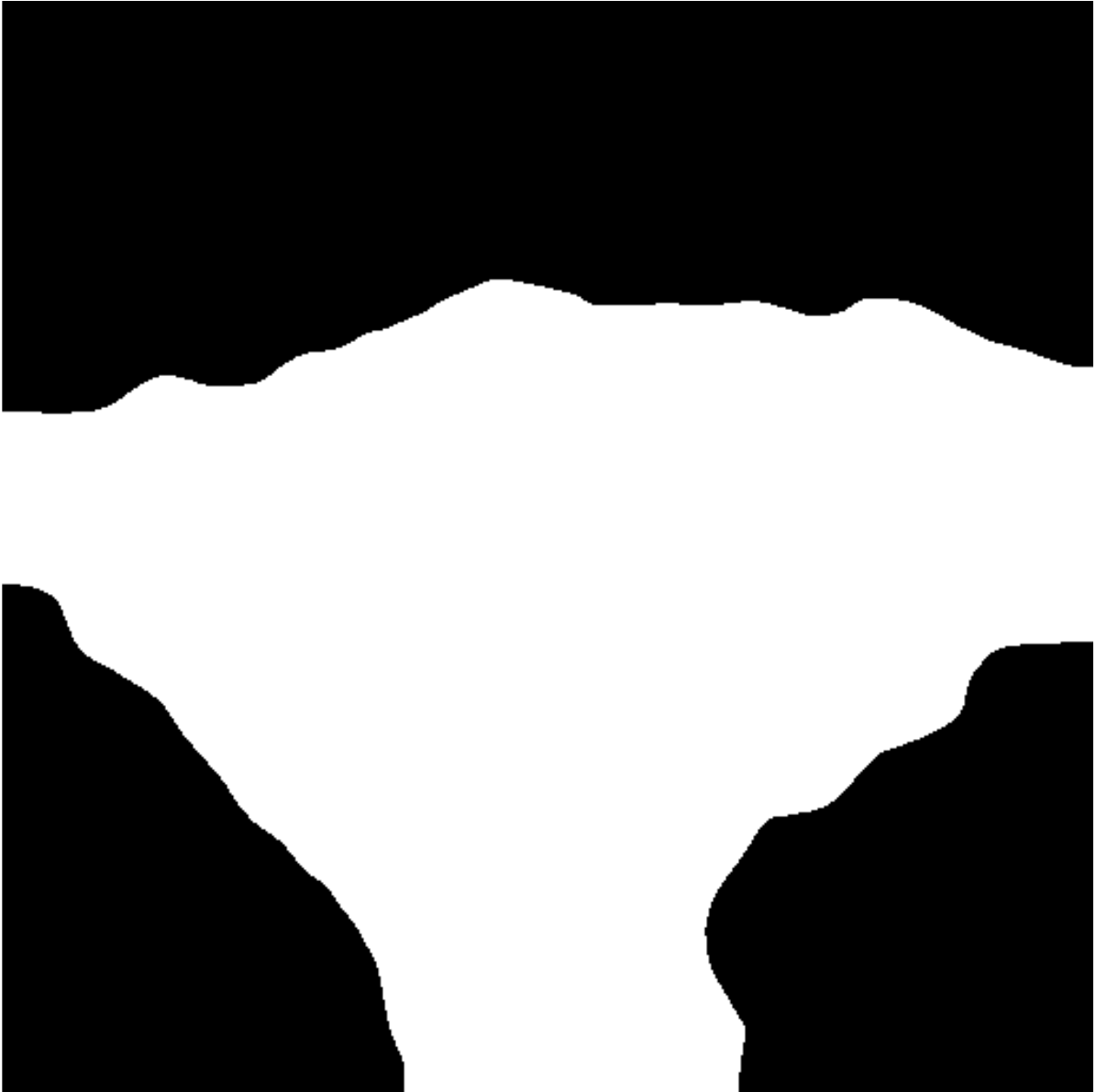}} 
& \raisebox{-5mm}{\includegraphics[width = 17mm]{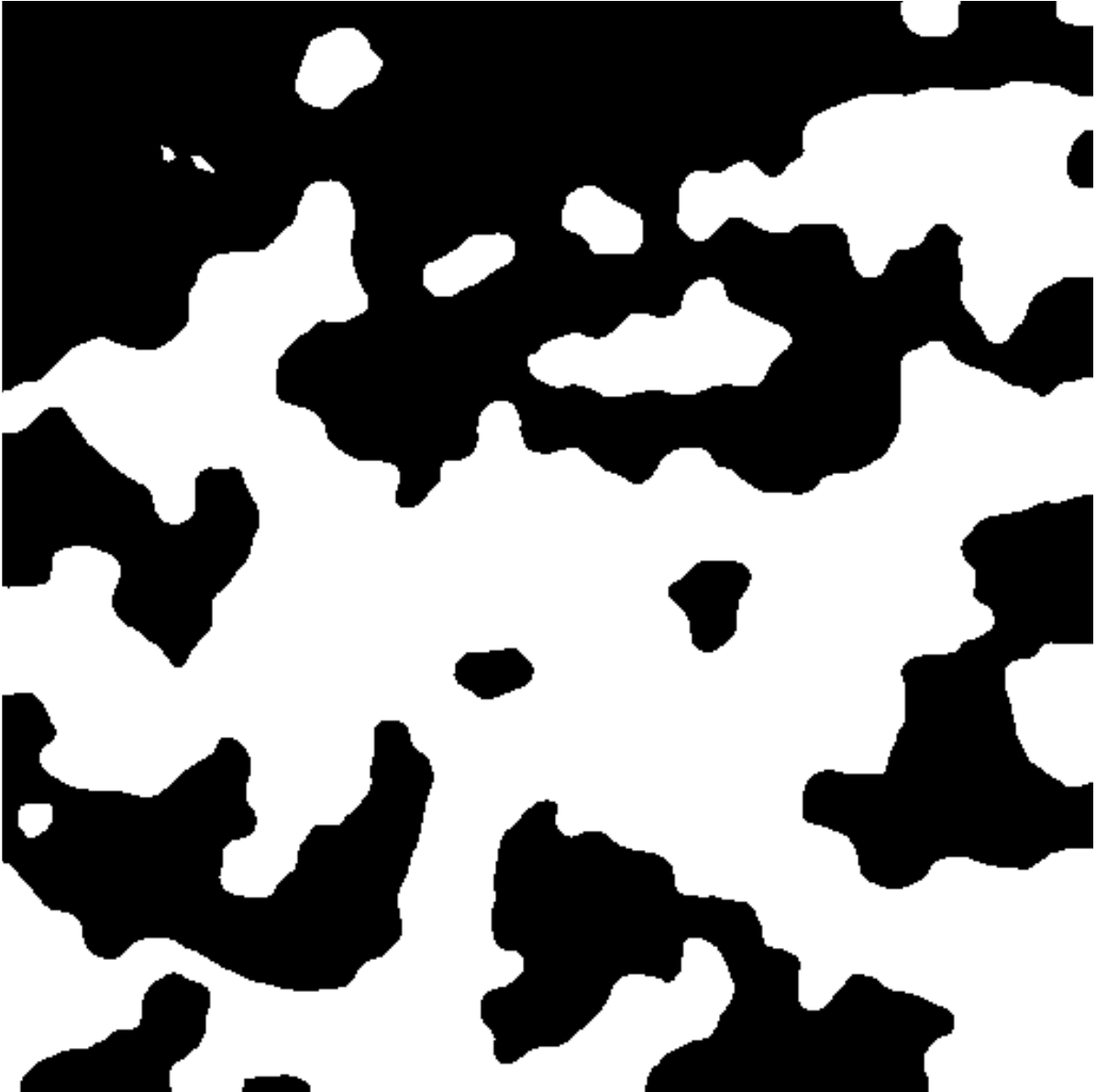}} \\
\rotatebox{90}{T-\textit{coupled} }
& \raisebox{-3mm}{\includegraphics[width = 17mm]{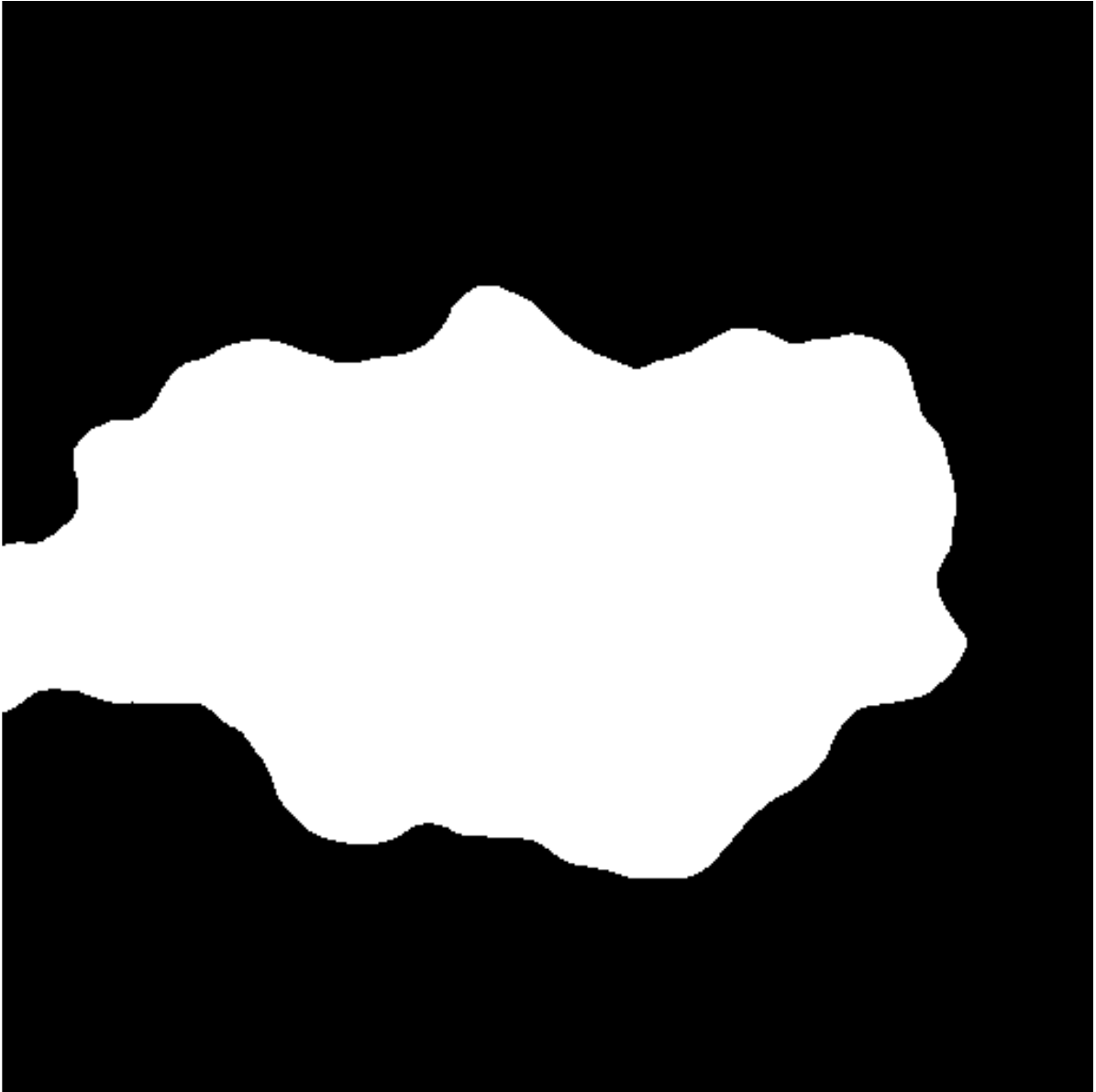}} 
& \raisebox{-3mm}{\includegraphics[width = 17mm]{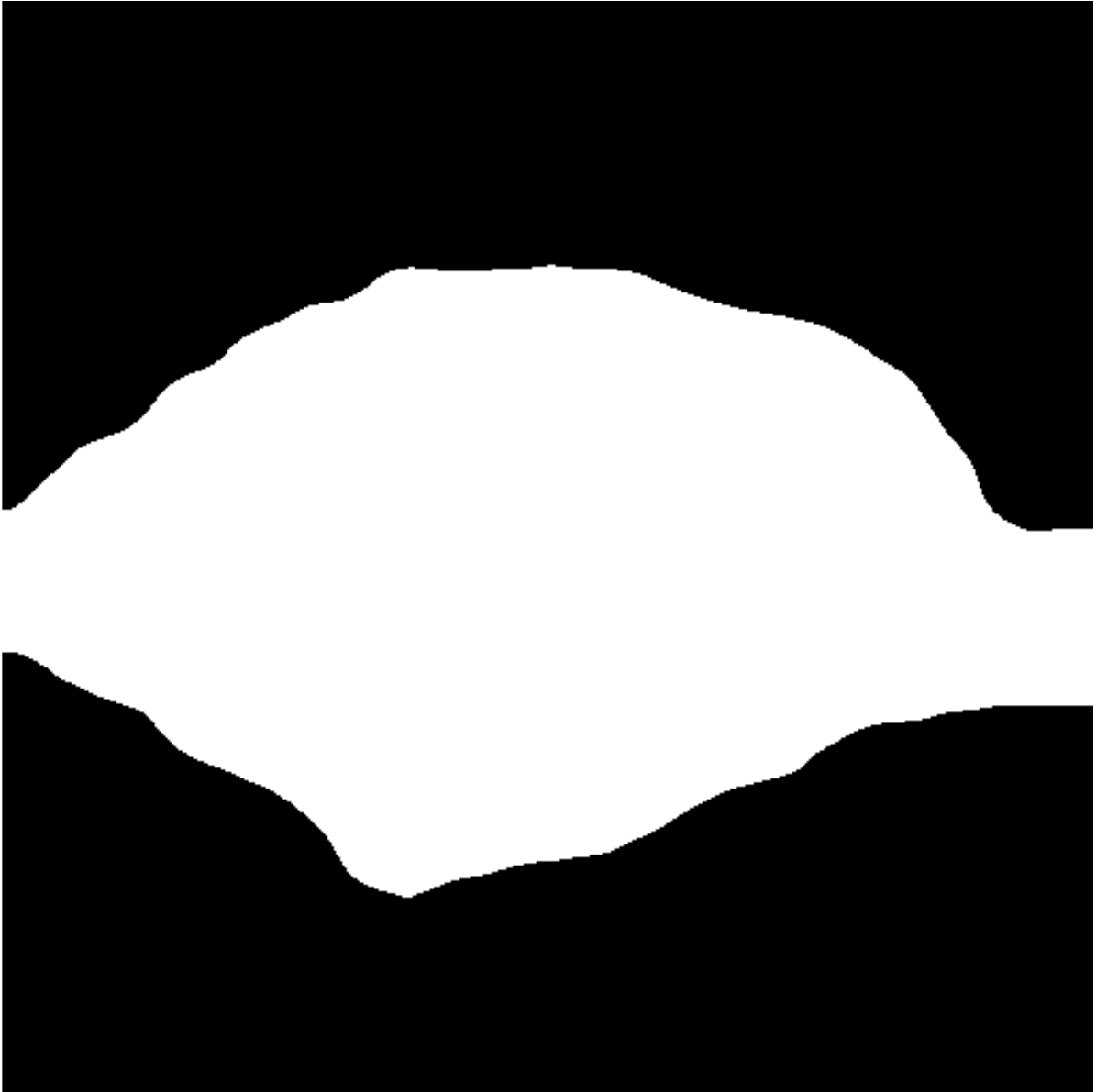}} 
& \raisebox{-3mm}{\includegraphics[width = 17mm]{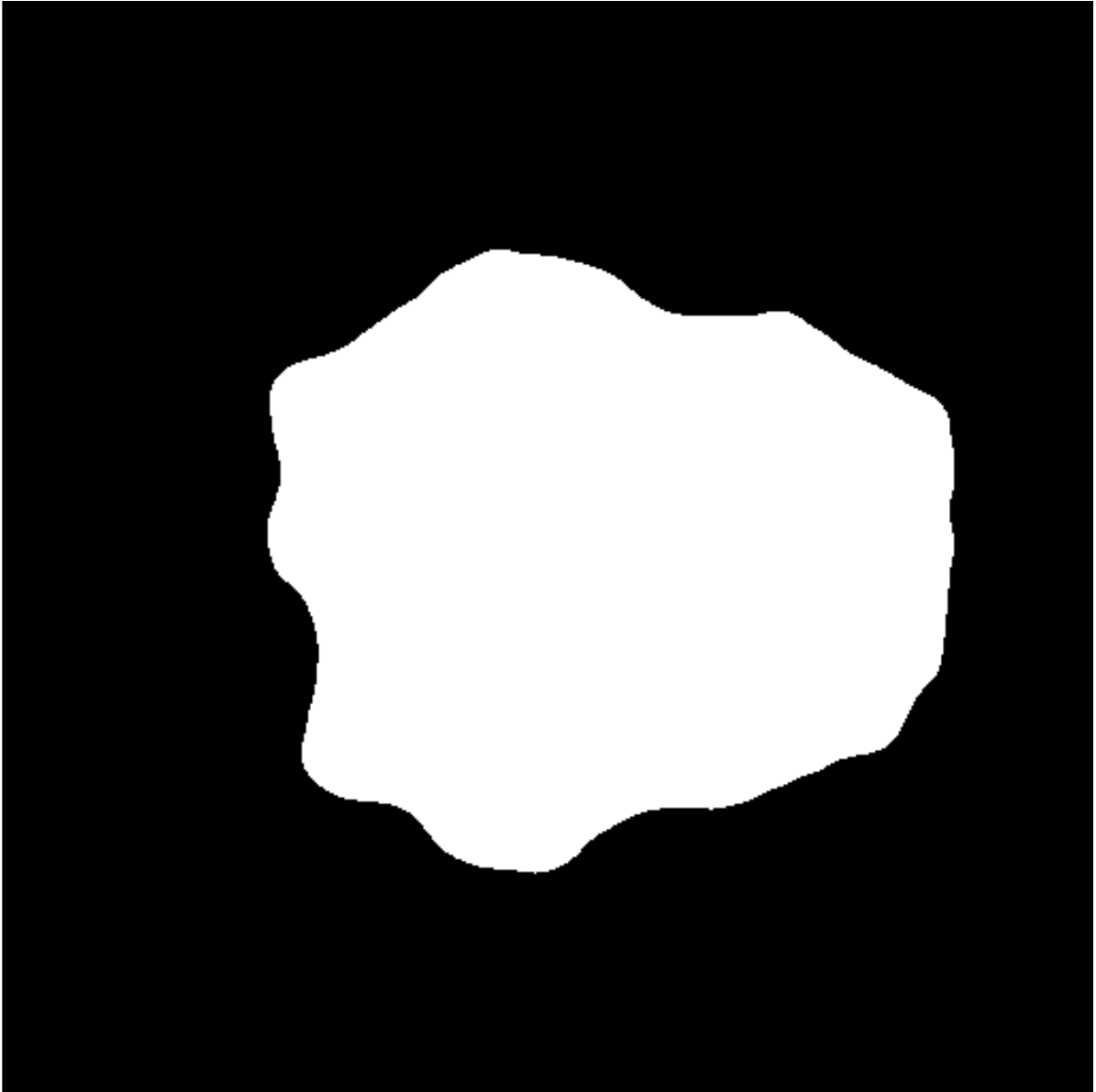}} 
& \raisebox{-3mm}{\includegraphics[width = 17mm]{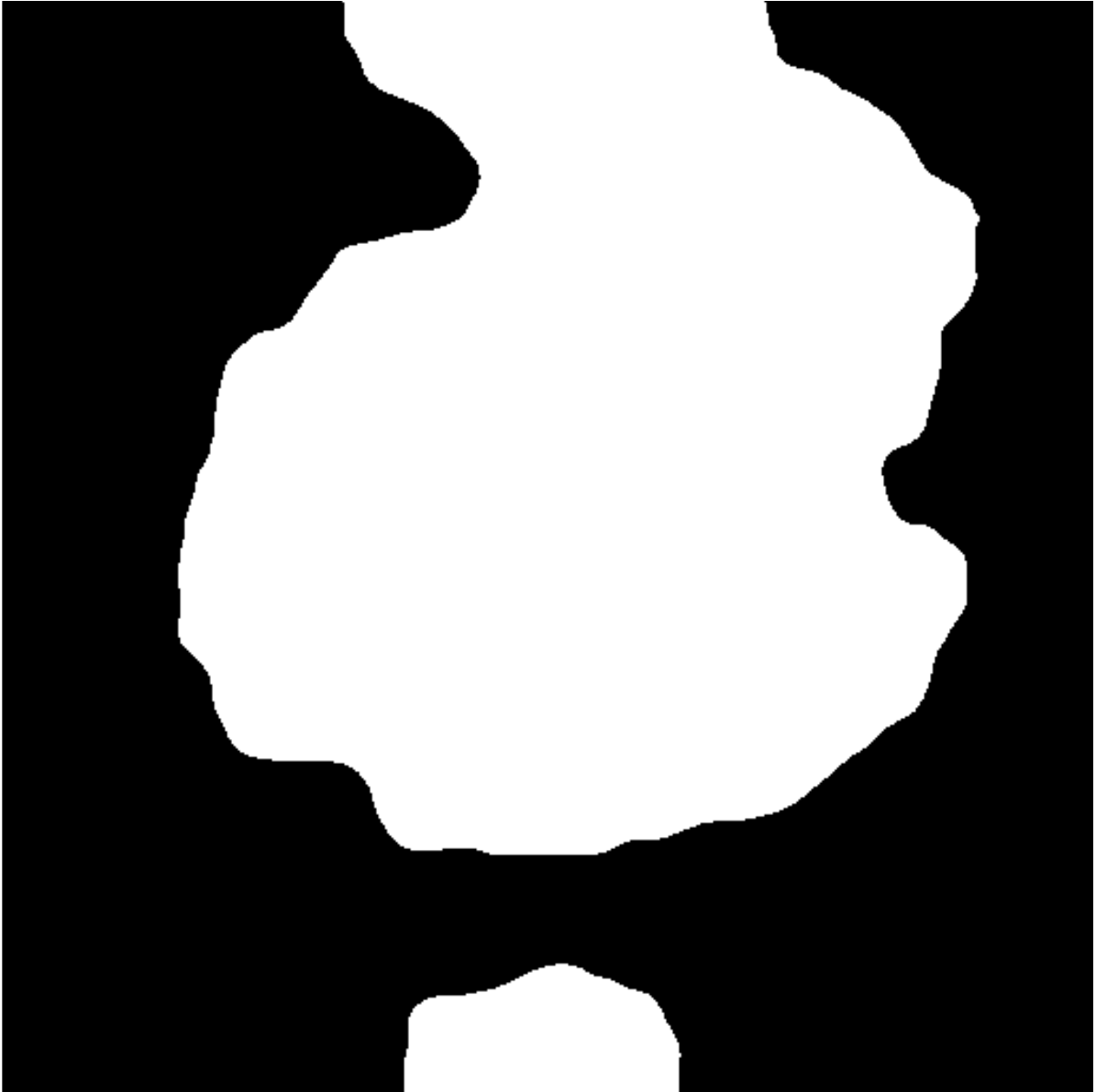}} 
& \raisebox{-3mm}{\includegraphics[width = 17mm]{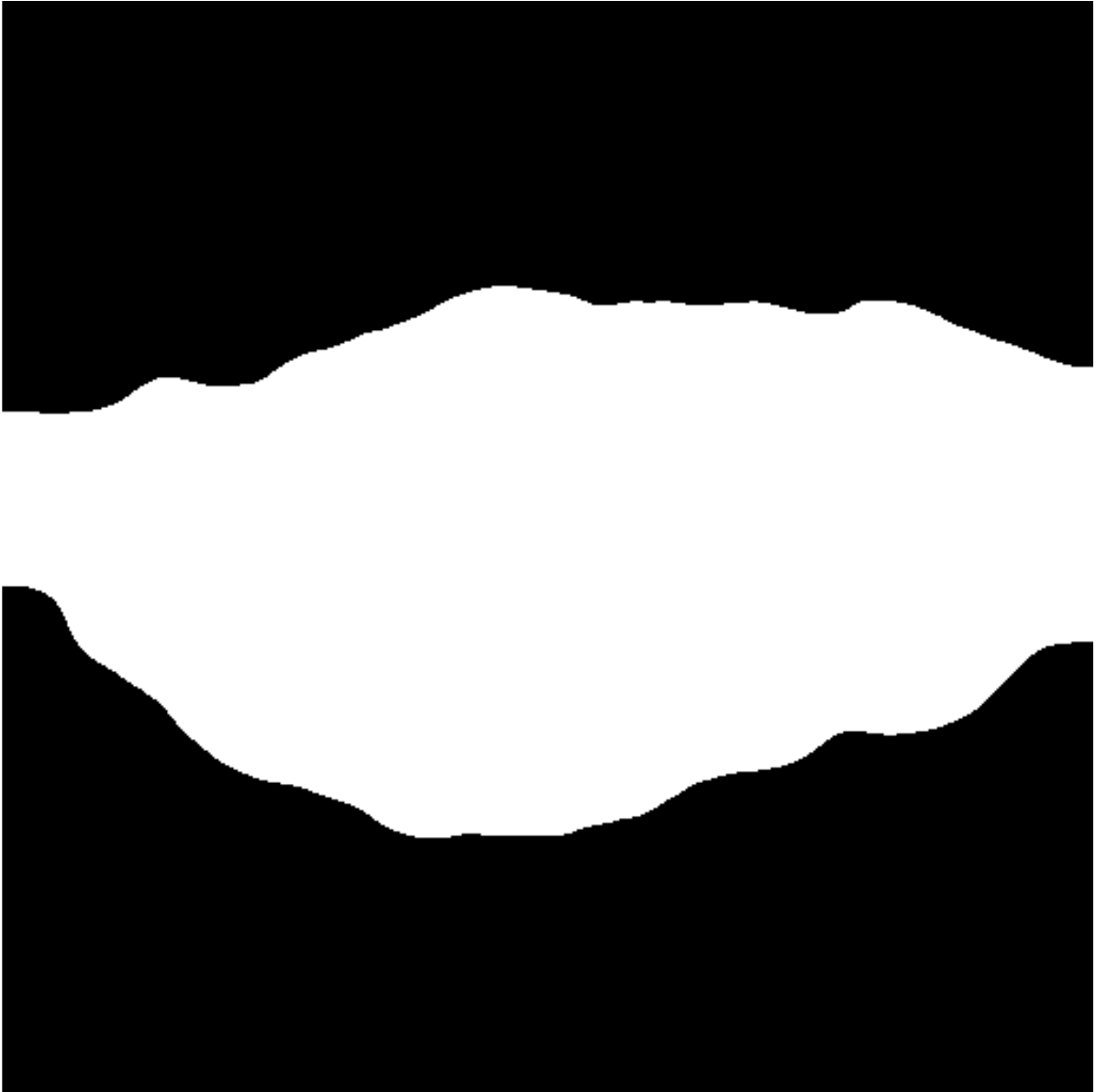}} 
& \raisebox{-3mm}{\includegraphics[width = 17mm]{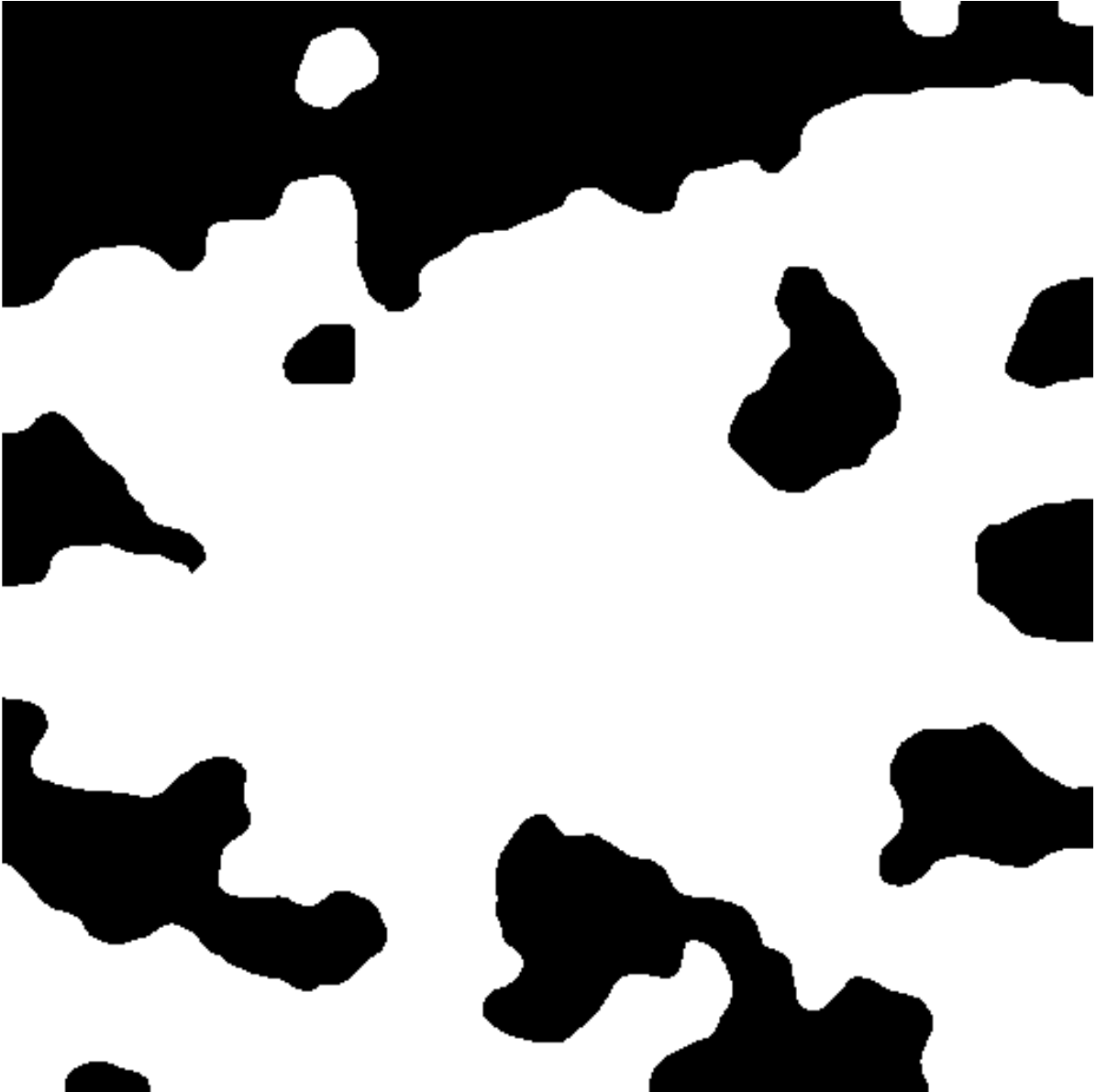}} 
\end{tabular}
\end{footnotesize}
\end{subfigure}}
\caption{\label{fig:perf7config}\textbf{Compared optimal segmentation.} Piecewise fractal textures are  characterized by $\Sigma_0^2 = 0.6$, $H_0 = 0.5$ and different $\Delta \Sigma^2$, $\Delta H$ as sketched in Fig.~\ref{subfig:mask}. First row: T-ROF segmentation $\widehat{\boldsymbol{M}}_{\mathrm{ROF}}$.
Second row: T-\textit{joint} segmentation $\widehat{\boldsymbol{M}}_{\mathrm{J}}$.
Third row: T-\textit{coupled} segmentation $\widehat{\boldsymbol{M}}_{\mathrm{C}}$.
}
\end{figure}

\setlength{\tabcolsep}{0.8mm}
\begin{table}
\centering
\resizebox{\textwidth}{!}{\begin{tabular}{cccccccc}
 & & I & II & III & IV & V & VI \\ 
\hline
\noalign{\vskip 1mm}   
 & & \begin{minipage}[c]{18mm}
$\Delta \Sigma^2 = 0.1$\\
$\Delta H = 0.2$
\end{minipage} & 
\begin{minipage}[c]{18mm}
$\Delta \Sigma^2 = 0.15$\\
$\Delta H = 0.1$
\end{minipage}  & \begin{minipage}[c]{18mm}
$\Delta \Sigma^2 = 0.1$\\
$\Delta H = 0.1$
\end{minipage} & \begin{minipage}[c]{18mm}
$\Delta \Sigma^2 = 0.05$\\
$\Delta H = 0.1$
\end{minipage} & \begin{minipage}[c]{18mm}
$\Delta \Sigma^2 = 0.1$\\
$\Delta H = 0.05$
\end{minipage} & \begin{minipage}[c]{19mm}
$\Delta \Sigma^2 = 0.1$\\
$\Delta H = 0.025$
\end{minipage} \\
\noalign{\vskip 1mm}   
\hline 
\noalign{\vskip 1mm}   
                                           T-ROF & \begin{minipage}{10mm}
                                           \centering
                                           Score\\
                                           $\widehat{\Delta H} $
                                           \end{minipage}
 & \begin{minipage}[c]{18mm}
 86.7 $\pm$ 2.1\%\\
 $ 0.21 \pm 0.07$
 \end{minipage}
 & \begin{minipage}[c]{18mm}
 79.5 $\pm$ 1.2\% \\
 $ 0.05 \pm 0.02$
 \end{minipage}
 & \begin{minipage}[c]{18mm}
 78.5 $\pm$ 1.1\% \\
 $ 0.05 \pm 0.06$
 \end{minipage}
 &  \begin{minipage}[c]{18mm}
 77.5 $\pm$ 2.9\% \\
 $ 0.07 \pm 0.04$
 \end{minipage}
 &  \begin{minipage}[c]{19mm}
 69.9 $\pm$ 7.1\%  \\
 $ 0.01 \pm 0.06$
 \end{minipage}
 & \begin{minipage}[c]{18mm}
 59.5 $\pm$ 2.4\%  \\
 $ 0.05 \pm 0.07$
 \end{minipage} \\
 \noalign{\vskip 1mm}   
\hline 
\noalign{\vskip 1mm}   
                                           T-\textit{joint} & \begin{minipage}{10mm}
                                           \centering
                                           Score\\
                                           $\widehat{\Delta H} $
                                           \end{minipage}
& \begin{minipage}[c]{18mm}
 91.6 $\pm$ 1.7\% \\
 $ 0.21 \pm 0.06$
 \end{minipage}
& \begin{minipage}[c]{18mm}
 91.5 $\pm$ 2.0\% \\
 $ 0.07 \pm 0.03$
 \end{minipage}
& \begin{minipage}[c]{18mm}
 90.2 $\pm$ 1.9\% \\
 $ 0.10 \pm 0.02$
 \end{minipage}
&  \begin{minipage}[c]{18mm}
 84.2 $\pm$ 4.5\% \\
 $ 0.04 \pm 0.07$
 \end{minipage}
&  \begin{minipage}[c]{18mm}
 84.3 $\pm$ 3.2\% \\
 $ 0.05 \pm 0.02$
 \end{minipage}
& \begin{minipage}[c]{18mm}
 74.7 $\pm$ 8.2\%  \\
 $0.11 \pm 0.28$
 \end{minipage} \\
 \noalign{\vskip 1mm}   
\hline 
\noalign{\vskip 1mm}   
                                           T-\textit{coupled} 
& \begin{minipage}{10mm}
\centering
                                           Score\\
                                           $\widehat{\Delta H} $
                                           \end{minipage}
& \begin{minipage}[c]{18mm}
 91.7 $\pm$ 1.7\%\\
 $ 0.20 \pm 0.05$
 \end{minipage}
& \begin{minipage}[c]{18mm}
 91.9 $\pm$ 4.0\%\\
 $0.06 \pm 0.04$
 \end{minipage}
& \begin{minipage}[c]{18mm}
 91.1 $\pm$ 1.5\%\\
 $ 0.10 \pm 0.02$
 \end{minipage}
&  \begin{minipage}[c]{18mm}
 85.5 $\pm$ 3.8\%\\
 $ 0.08 \pm 0.04$
 \end{minipage}
&  \begin{minipage}[c]{18mm}
 86.1 $\pm$ 4.3\% \\
 $ 0.05 \pm 0.02$
 \end{minipage}
& \begin{minipage}[c]{18mm}
 74.3 $ \pm$ 8.2\%  \\
 $0.06 \pm 0.04$
 \end{minipage} \\
\end{tabular}}
\caption{\label{tab:perfs}\textbf{Optimal segmentation performance for different configurations of fractal textures, averaged over $5$ realizations.} Piecewise fractal textures are characterized by $(\Sigma_0^2, H_0) = (0.6, 0.5)$ and different $(\Delta \Sigma^2, \Delta H)$ as sketched in Fig.~\ref{subfig:mask}. 
First row: T-ROF segmentation.
Second row: T-\textit{joint} segmentation. 
Third row: T-\textit{coupled} segmentation.}
\end{table}

\subsubsection{Computational costs}

Comparisons in terms of computational costs both between the three approaches, and between the two classes of proximal algorithms, dual forward-backward, standard and accelerated (FISTA), vs. primal-dual, standard and accelerated by strong convexity (cf. Sec.~\ref{sec:optimization}), are reported in Table~\ref{tab:compcosts}, for configurations I and III (regarded as \emph{easy} and \emph{difficult}) considered as representative. 
Computational costs are reported in number of iterations actually used to reach the stopping criterion and in real time, for the optimal set of hyperparameters and averaged over $5$ realizations.

Table~\ref{tab:compcosts} shows first that, as expected,  accelerated algorithms are always require less iterations than non accelerated ones, thus generally leading to lower computational times (though this is not always the case with FISTA whose complexity per iteration is larger).
Also, T-ROF shows always lower computational costs compared to T-\textit{joint} and T-\textit{coupled}.
This is expected as T-ROF only works with the regularity and do not use variance.

\textit{FISTA vs. Accelerated primal-dual.} For T-ROF, FISTA is overall preferable to the accelerated primal-dual algorithm, as both show equivalent computational costs for Configuration I but FISTA is ten times faster (both in number of iterations and computation time) for Configuration III. 
For T-\textit{joint} and T-\textit{coupled}, for both configurations, accelerated primal-dual is faster than FISTA. 
For T-\textit{coupled}, in configuration III, FISTA has actually not converged when meeting the the upper limit of iterations.
Therefore, FISTA is to be preferred for T-ROF, while accelerated primal-dual algorithms are more relevant for T-\textit{joint} and T-\textit{coupled}. 

\textit{T-\textit{joint} vs. T-\textit{coupled}.} Focusing on T-\textit{joint} and T-\textit{coupled} and thus on the accelerated primal-dual algorithm that is faster for these two methods, Table~\ref{tab:compcosts} shows that  T-\textit{joint} is solved 3 to 4 times faster (both in number of iterations and computational cost) than 
T-\textit{coupled}.

\setlength{\tabcolsep}{0.7mm}
\begin{table}[h!]
\centering
\resizebox{\textwidth}{!}{\begin{tabular}{ccccc|ccc}
 & & \multicolumn{2}{r}{\begin{minipage}{3cm} \centering Configuration I \\ \vspace{1mm} \end{minipage}} & &  \multicolumn{2}{r}{\begin{minipage}{32mm} \centering Configuration III  \\ \vspace{1mm} \end{minipage}}  \\
 & & T-ROF  & T-\emph{joint}	& T-\emph{coupled} &   T-ROF  & T-\emph{joint}	& T-\emph{coupled}   \\
 \midrule
 \multirow{4}{*}{\rotatebox{90}{\begin{minipage}[c]{1.5cm}
 \centering
 Iterations
 \\ ($10^3$ it.)
 \end{minipage} }} & \rotatebox{0}{DFB} & \begin{minipage}[c]{21mm}
\centering
$96  \pm 48 $ \\
\end{minipage}
& \begin{minipage}[c]{21mm}
\centering
$>250$  \\
\end{minipage}
& \begin{minipage}[c]{21mm}
\centering
$>250$  \\
\end{minipage}
& \begin{minipage}[c]{21mm}
\centering
$241  \pm 18 $  \\
\end{minipage}
& \begin{minipage}[c]{21mm}
\centering
$>250$  \\
\end{minipage}
& \begin{minipage}[c]{21mm}
\centering
$>250$  \\
\end{minipage}
\\ 
 & \rotatebox{0}{FISTA} & \begin{minipage}[c]{21mm}
\centering
$1.7  \pm 0.4 $  \\
\end{minipage}
& \begin{minipage}[c]{21mm}
\centering
$50.2  \pm 21.0 $  \\
\end{minipage}
& \begin{minipage}[c]{21mm}
\centering
$231  \pm 37$  \\
\end{minipage}
& \begin{minipage}[c]{21mm}
\centering
$\mathbf{3.7 \pm 0.7}$  \\
\end{minipage}
& \begin{minipage}[c]{21mm}
\centering
$48.1  \pm 3.4$  \\
\end{minipage}
& \begin{minipage}[c]{21mm}
\centering
$>250$  \\
\end{minipage}
\\
 & \rotatebox{0}{PD} &
 \begin{minipage}[c]{21mm}
\centering
$31.8  \pm 17.0 $  \\
\end{minipage}
& \begin{minipage}[c]{21mm}
\centering
$>250$  \\
\end{minipage}
& \begin{minipage}[c]{21mm}
\centering
$>250$  \\
\end{minipage}
& \begin{minipage}[c]{21mm}
\centering
$201  \pm 69 $  \\
\end{minipage}
& \begin{minipage}[c]{21mm}
\centering
$>250$  \\
\end{minipage}
& \begin{minipage}[c]{21mm}
\centering
$>250$  \\
\end{minipage}
\\
 & \rotatebox{0}{AcPD} & \begin{minipage}[c]{21mm}
\centering
$\mathbf{1.5 \pm 0.4}$ \\
\end{minipage}
& \begin{minipage}[c]{21mm}
\centering
$\mathbf{31.4  \pm 4.6 }$  \\
\end{minipage}
& \begin{minipage}[c]{21mm}
\centering
$\mathbf{125  \pm 67 }$  \\
\end{minipage}
& \begin{minipage}[c]{21mm}
\centering
$45.2  \pm 43$ \\
\end{minipage}
& \begin{minipage}[c]{21mm}
\centering
$\mathbf{40.5  \pm 2.8 }$ \\
\end{minipage}
& \begin{minipage}[c]{21mm}
\centering
$\mathbf{121  \pm 42 }$\\
\end{minipage}\\
 \midrule
 \multirow{4}{*}{\rotatebox{90}{Time (s)} }
  & \rotatebox{0}{DFB} & \begin{minipage}[c]{21mm}
\centering
$1,090  \pm 520$\\
\end{minipage}
& \begin{minipage}[c]{21mm}
\centering
$4,840  \pm 15$\\
\end{minipage}
& \begin{minipage}[c]{21mm}
\centering
$4,210  \pm 76$\\
\end{minipage}
& \begin{minipage}[c]{21mm}
\centering
$2,010 \pm 73$\\
\end{minipage}
& \begin{minipage}[c]{21mm}
\centering
$4,810  \pm 215$\\
\end{minipage}
& \begin{minipage}[c]{21mm}
\centering
$4,200  \pm 76$\\
\end{minipage}
\\ 
 & \rotatebox{0}{FISTA} &
 \begin{minipage}[c]{21mm}
\centering
$16 \pm 4$\\
\end{minipage}
& \begin{minipage}[c]{21mm}
\centering
$1,030  \pm 410 $\\
\end{minipage}
& \begin{minipage}[c]{21mm}
\centering
$4,800 \pm 560 $\\
\end{minipage}
& \begin{minipage}[c]{21mm}
\centering
$\mathbf{30 \pm 5}$\\
\end{minipage}
& \begin{minipage}[c]{21mm}
\centering
$989 \pm 64$\\
\end{minipage}
& \begin{minipage}[c]{21mm}
\centering
$5,110 \pm 340 $\\
\end{minipage}
\\
 & \rotatebox{0}{PD} &
 \begin{minipage}[c]{21mm}
\centering
$297 \pm 150$\\
\end{minipage}
& \begin{minipage}[c]{21mm}
\centering
$4,180  \pm 69$\\
\end{minipage}
& \begin{minipage}[c]{21mm}
\centering
$4,110 \pm 43$\\
\end{minipage}
& \begin{minipage}[c]{21mm}
\centering
$1,580  \pm 490$\\
\end{minipage}
& \begin{minipage}[c]{21mm}
\centering
$4,150  \pm 18$\\
\end{minipage}
& \begin{minipage}[c]{21mm}
\centering
$4,100  \pm 15$\\
\end{minipage}
\\
 & \rotatebox{0}{AcPD} &
 \begin{minipage}[c]{21mm}
\centering
$\mathbf{15 \pm 4}$\\
\end{minipage}
& \begin{minipage}[c]{21mm}
\centering
$\mathbf{619 \pm 96}$\\
\end{minipage}
& \begin{minipage}[c]{24mm}
\centering
$\mathbf{2,420 \pm 1,300 }$\\
\end{minipage}
& \begin{minipage}[c]{21mm}
\centering
$349 \pm 330$\\
\end{minipage}
& \begin{minipage}[c]{21mm}
\centering
$\mathbf{785 \pm 59}$\\
\end{minipage}
& \begin{minipage}[c]{21mm}
\centering
$\mathbf{2,320 \pm 790 }$\\
\end{minipage}
\end{tabular}}
\caption{\label{tab:compcosts} Number of iterations and computational time 
necessary to reach Condition~\eqref{eq:stop_crit} for the different proximal algorithms investigated, illustrated on two configurations I~($\Delta H = 0.2$, $\Delta \Sigma^2 = 0.1$) and III~($\Delta H = 0.1$, $\Delta \Sigma^2 = 0.1$). \textbf{DFB}: Dual Forward-Backward, \textbf{FISTA}: inertial acceleration of DFB, \textbf{PD}: primal-dual, \textbf{AcPD}: strong-convexity based acceleration of PD.}
\end{table}

\subsubsection{Overall comparison}

As an overall conclusion, results reported above show that there are benefits to use together local regularity and variance, compared to regularity only, when the changes in regularity become small. 
This implies switching from accelerated dual forward-backward algorithms (FISTA, for T-ROF) to accelerated primal-dual algorithms (for T-\textit{joint} and T-\textit{coupled}). 

For difficult configurations, T-\textit{coupled} (slightly) outperforms T-\textit{joint} in terms of segmentation performance, at the price of non negligible increases of computational costs. 
In that sense,  T-\textit{joint} can be considered a reasonable trade-off between too poor segmentation performance (as those of T-ROF) and too large computational costs (as those of T-\textit{coupled}). 

\subsection{Real-world textures}
\label{sec:random_contours}

The actual potential of the proposed segmentation strategies is further illustrated on real-world textures, arbitrarily chosen from the commonly used large, documented and publicly available \emph{University of Maryland, HighResolution (UMD HR)}\footnote{{\tt http://legacydirs.umiacs.umd.edu/~fer/website-texture/texture.htm}} texture dataset.
This dataset consists of 50 classes of homogeneous textures, each consisting of 40 images of same textures under different imaging conditions (light, angle,\ldots). 
From arbitrarily chosen pairs of images, piecewise homogeneous textures are constructed by including textures one into the other using the ellipse-shaped mask used above, as sketched and illustrated in Fig.~\ref{fig:umd_textures}. 
Homogeneous textures are centered and normalized in variance independently and prior to inclusion, to avoid that detection could be based on sole change of means or variances.

The T-ROF, T-\textit{Joint} and T-\textit{Coupled} segmentation approaches proposed here are applied to and compared on these real-world piecewise homogeneous textures.
Further, they are compared against one of the state-of-the-art texture segmentation procedure, based on matrix factorization and local spectral histograms clustering, proposed in~\cite{yuan_factorization-based_2015} and being a joint feature selection and segmentation procedure, and hereafter referred to as \emph{LSHC-MF}. 

Fig.~\ref{fig:res_umd} reports and compares segmentation performance, in terms of percentage of pixels correctly labelled. 
For the T-ROF, T-\textit{Joint} and T-\textit{Coupled} approaches, optimal performance are reported, after a grid search 
for the selection of the hyperparameters $\lambda$ and $\alpha$. 
For \emph{LSHC-MF}, optimal performance are also reported, after a grid search on the window size, within which local histogram statistics are computed.   

Fig.~\ref{fig:res_umd} clearly shows that the \emph{LSHC-MF} approach fails to segment correctly the piecewise homogeneous texture and is outperformed by the T-ROF, T-\textit{Joint} and T-\textit{Coupled} strategies.
It also shows that T-ROF, while not being inconsistant, does not yield satisfactory segmentation, whereas T-\textit{Joint} and T-\textit{Coupled} do. 
Further, it is worth noting that not only the \textit{Coupled} strategy reaches (slightly) better segmentation performance compared to the \textit{Joint} one, but that it also yields smoother boundaries, an outcome of utmost 
 relevance for decision/diagnostic in numerous real-world applications, cf. e.g.~\cite{pascal2018joint}. 
Such significant improvement in segmentation performance is at the cost tough of significantly higher computational cost: 
When \emph{LSHC-MF} requires of the order of $1$s to perform its most accurate segmentation, as in Fig.~\ref{fig:res_umd}(d), The T-ROF, T-\textit{Joint} and T-\textit{Coupled} segmentation approaches require of the orders of $10$s, $600$s $1800$s respectively, in agreement with computational cost evaluation reported in Tab.~\ref{tab:compcosts}. 
Consistent conclusions can be drawn for other choices of classes of textures in the UMD HR dataset.

\begin{figure}
\centering
\begin{subfigure}{0.24\linewidth}
\centering
\includegraphics[width = 2.9cm]{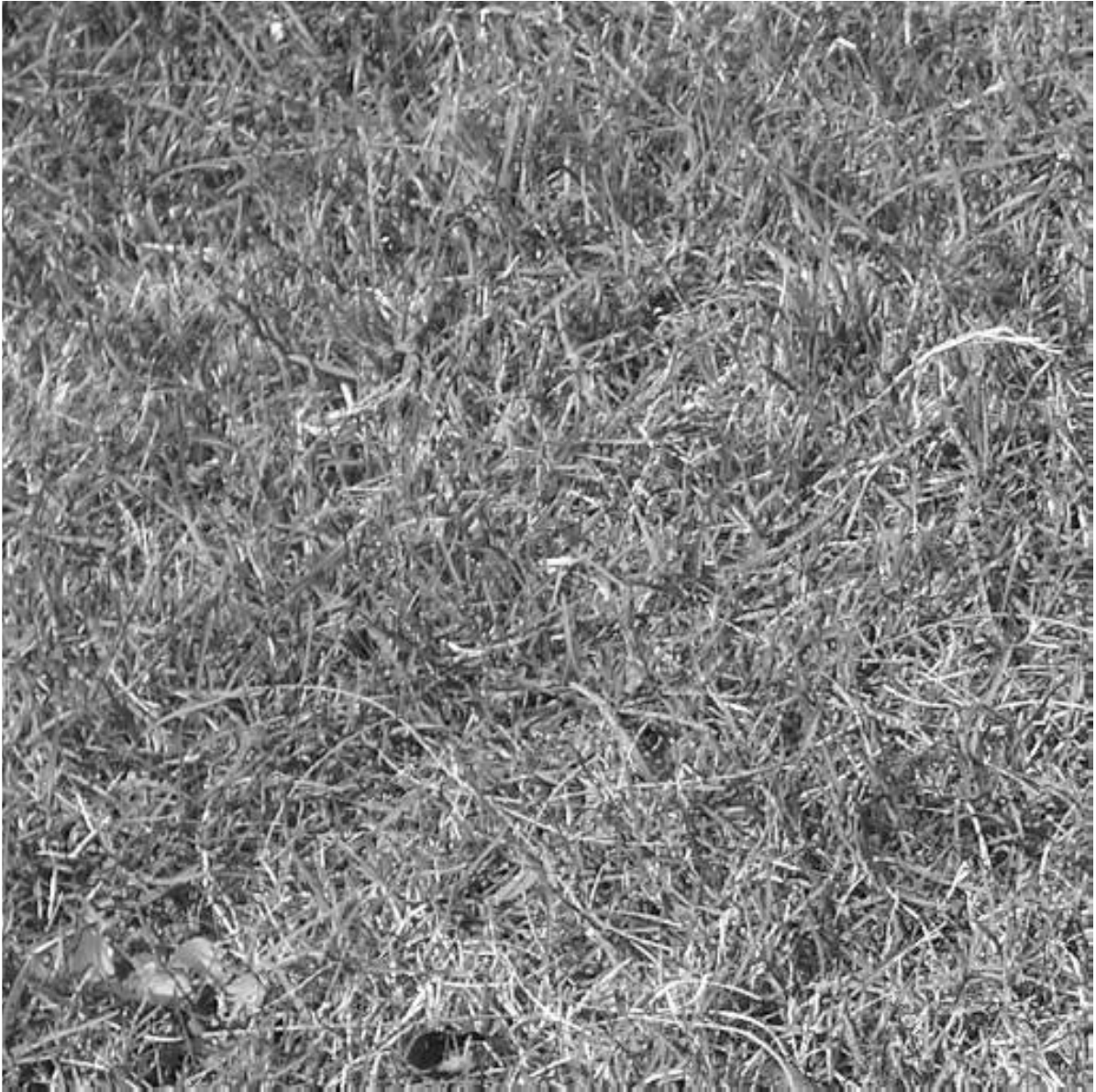}
\subcaption{Texture \textbf{A}}
\end{subfigure}
\begin{subfigure}{0.24\linewidth}
\centering
\includegraphics[width = 2.9cm]{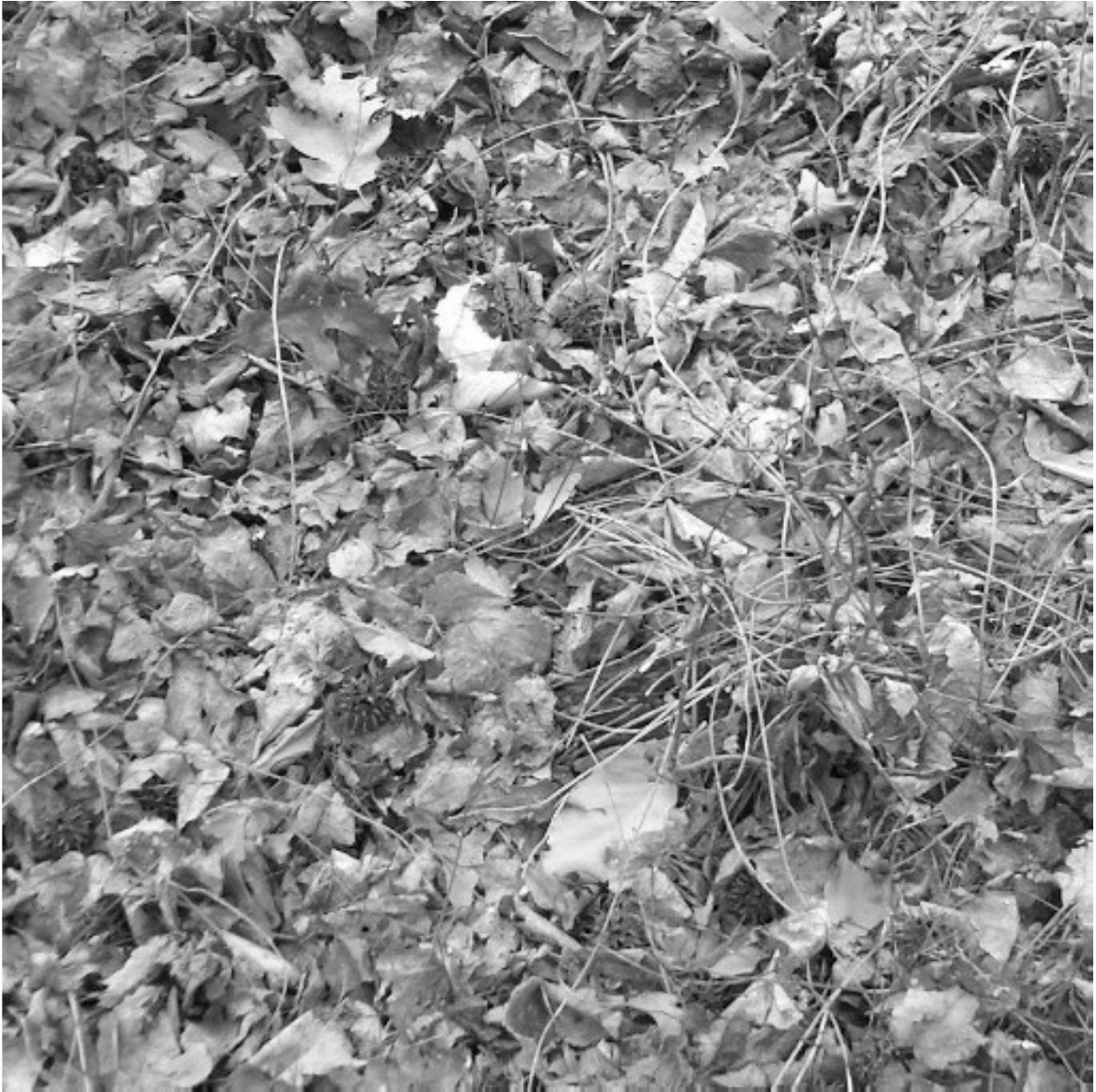}
\subcaption{Texture \textbf{B}}
\end{subfigure}
\begin{subfigure}{0.24\linewidth}
\centering
\includegraphics[width = 2.9cm]{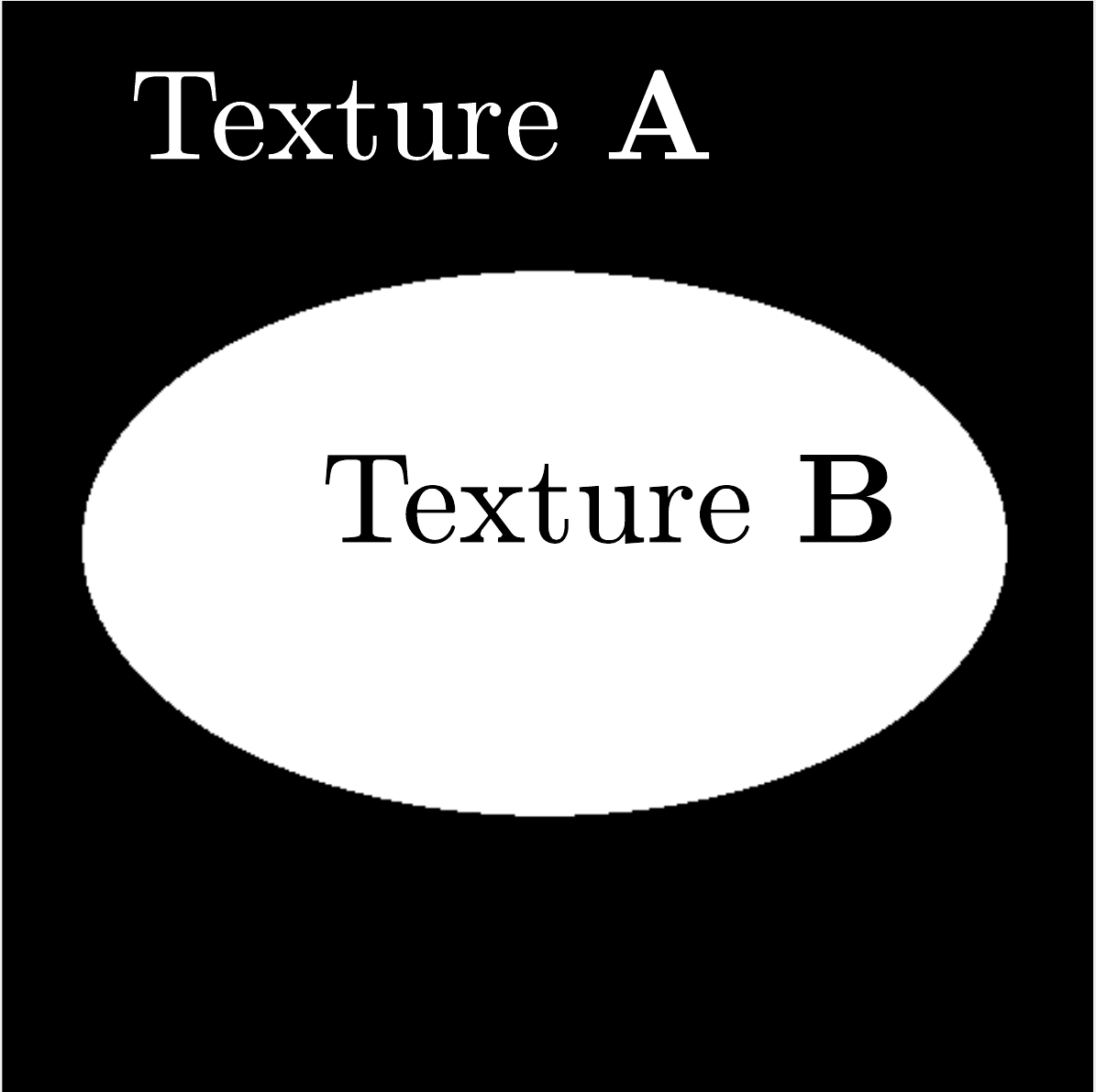}
\subcaption{Mask}
\end{subfigure}
\begin{subfigure}{0.24\linewidth}
\centering
\includegraphics[width = 2.9cm]{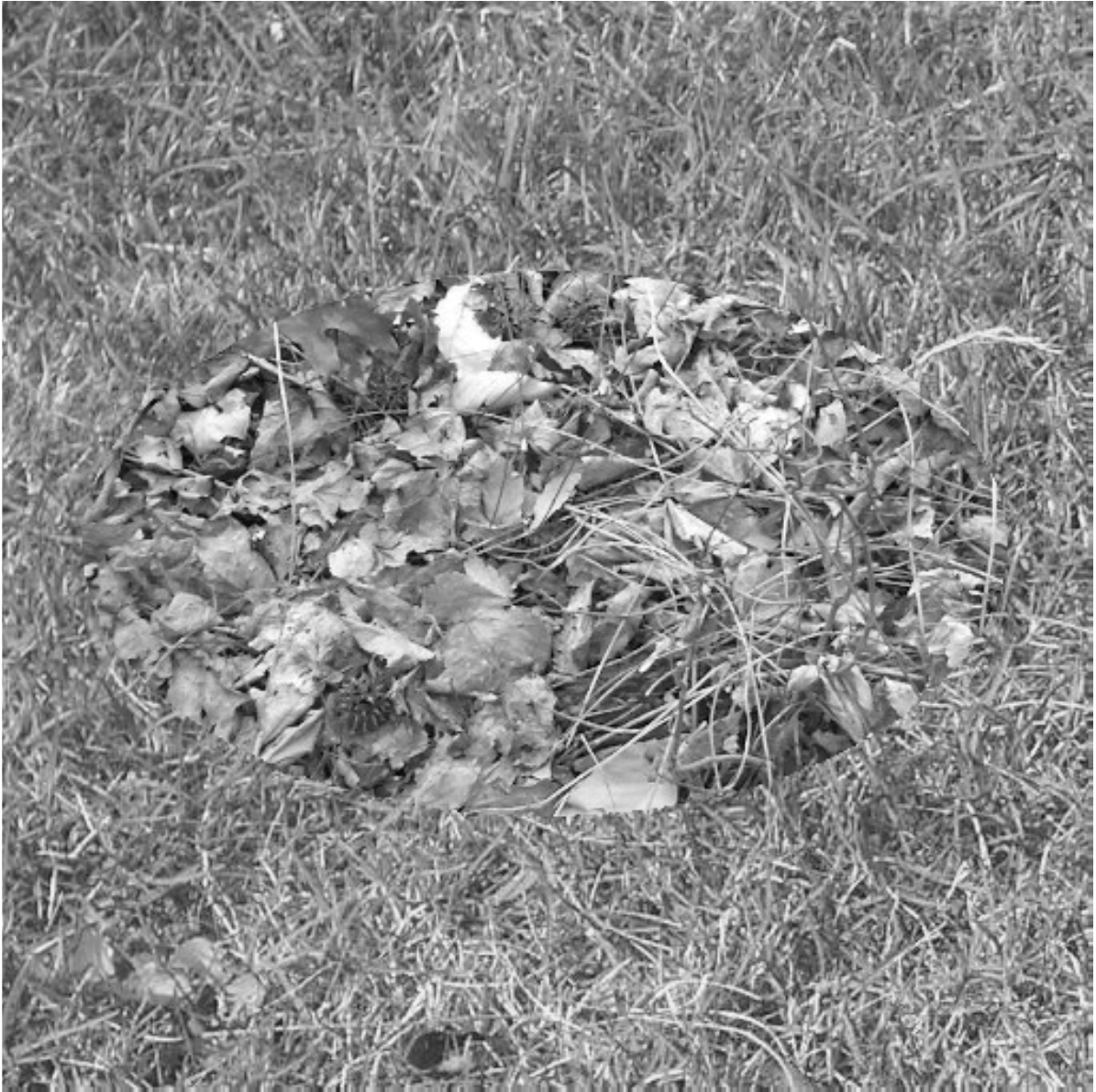}
\subcaption{Mixture \textbf{A}-\textbf{B}}
\end{subfigure}
\caption{\label{fig:umd_textures} \textbf{Real world textures.} two samples of real-world textures taken from the UMD HR texture dataset ((a) and (b)), piecewise constant mask (c) and piecewise homogeneous texture (d).}
\end{figure}

\begin{figure}
\centering
\begin{tabular}{cccc}
\includegraphics[width = 2.9cm]{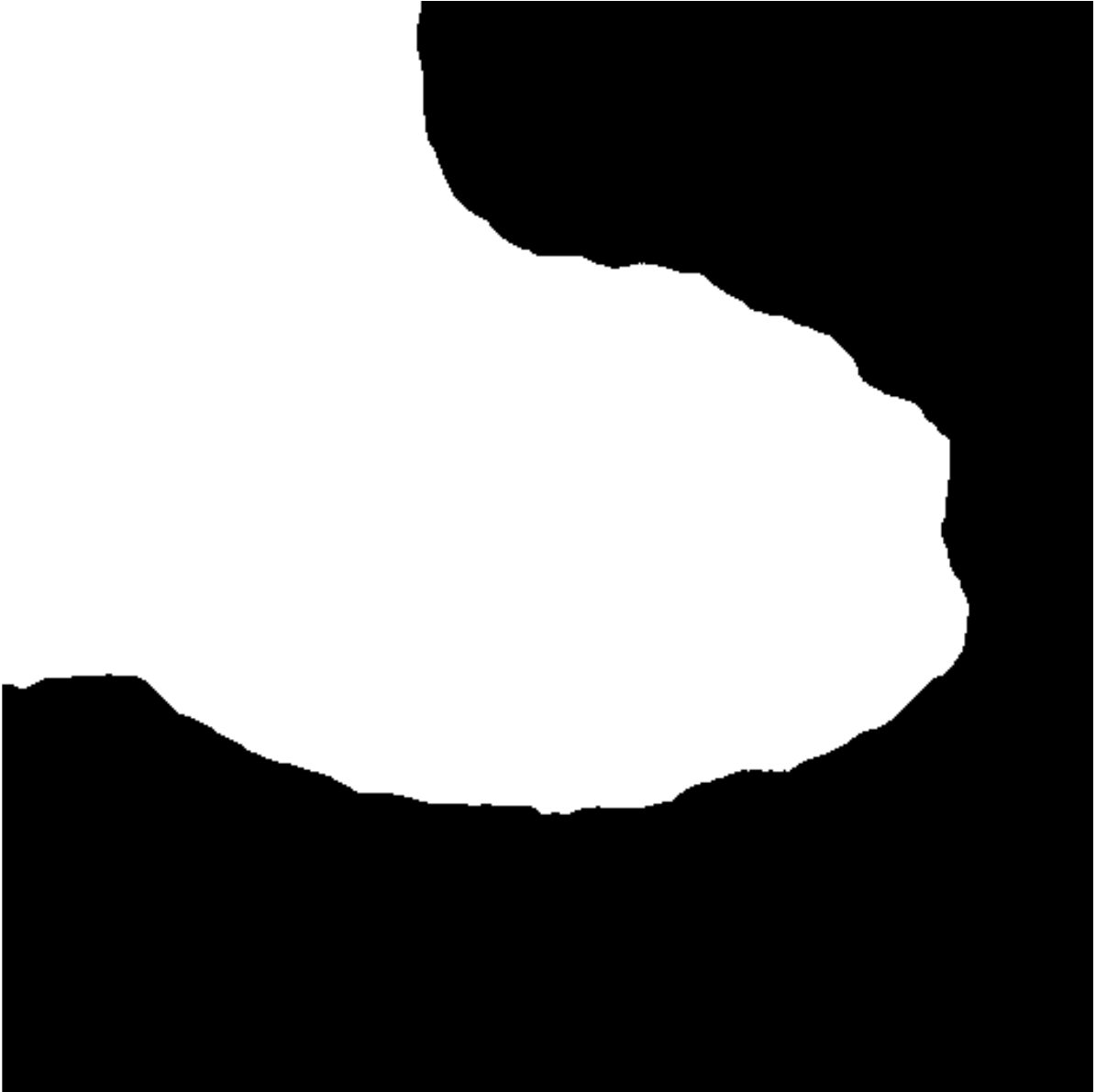} & \includegraphics[width = 2.9cm]{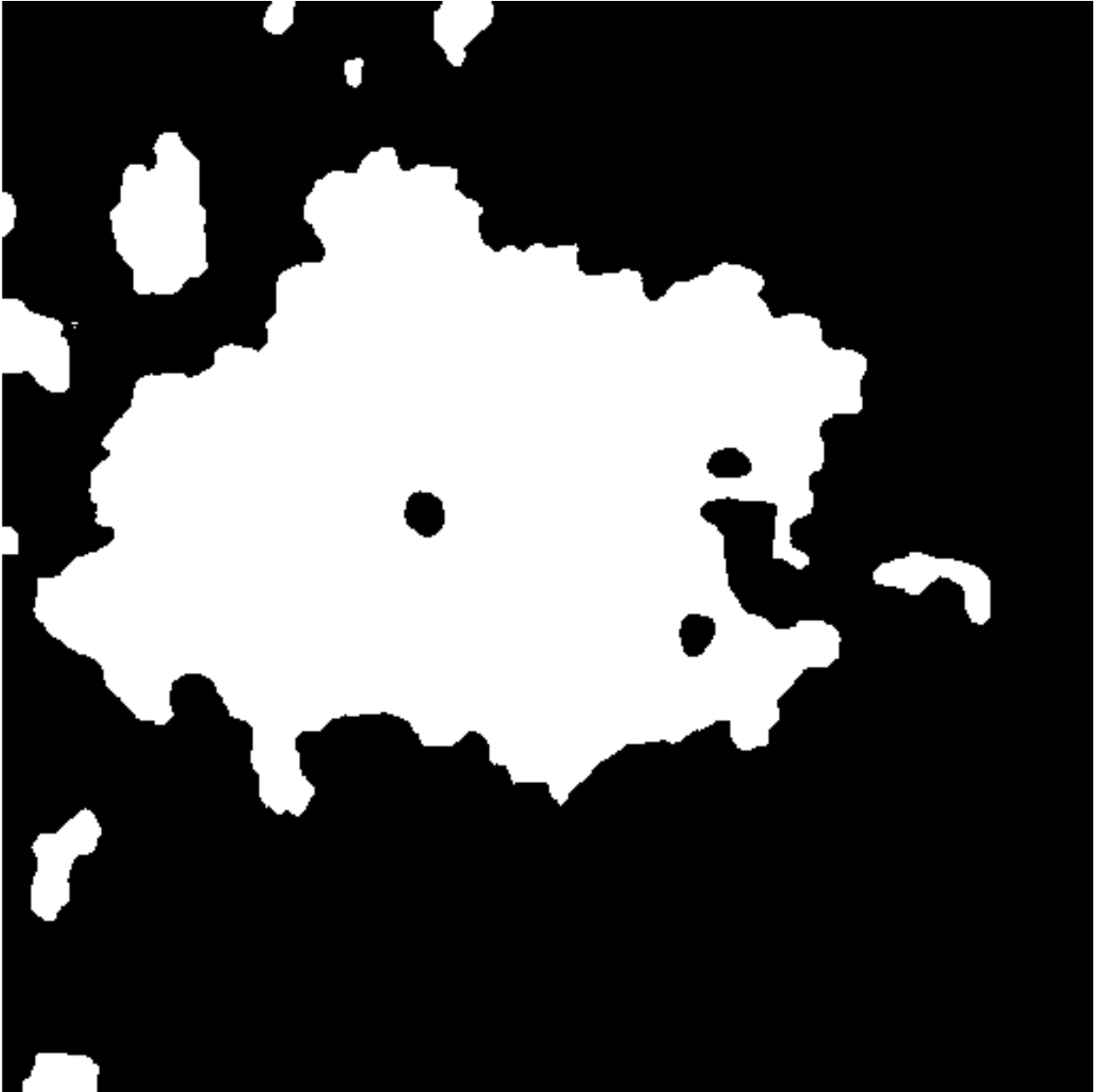}
& \includegraphics[width = 2.9cm]{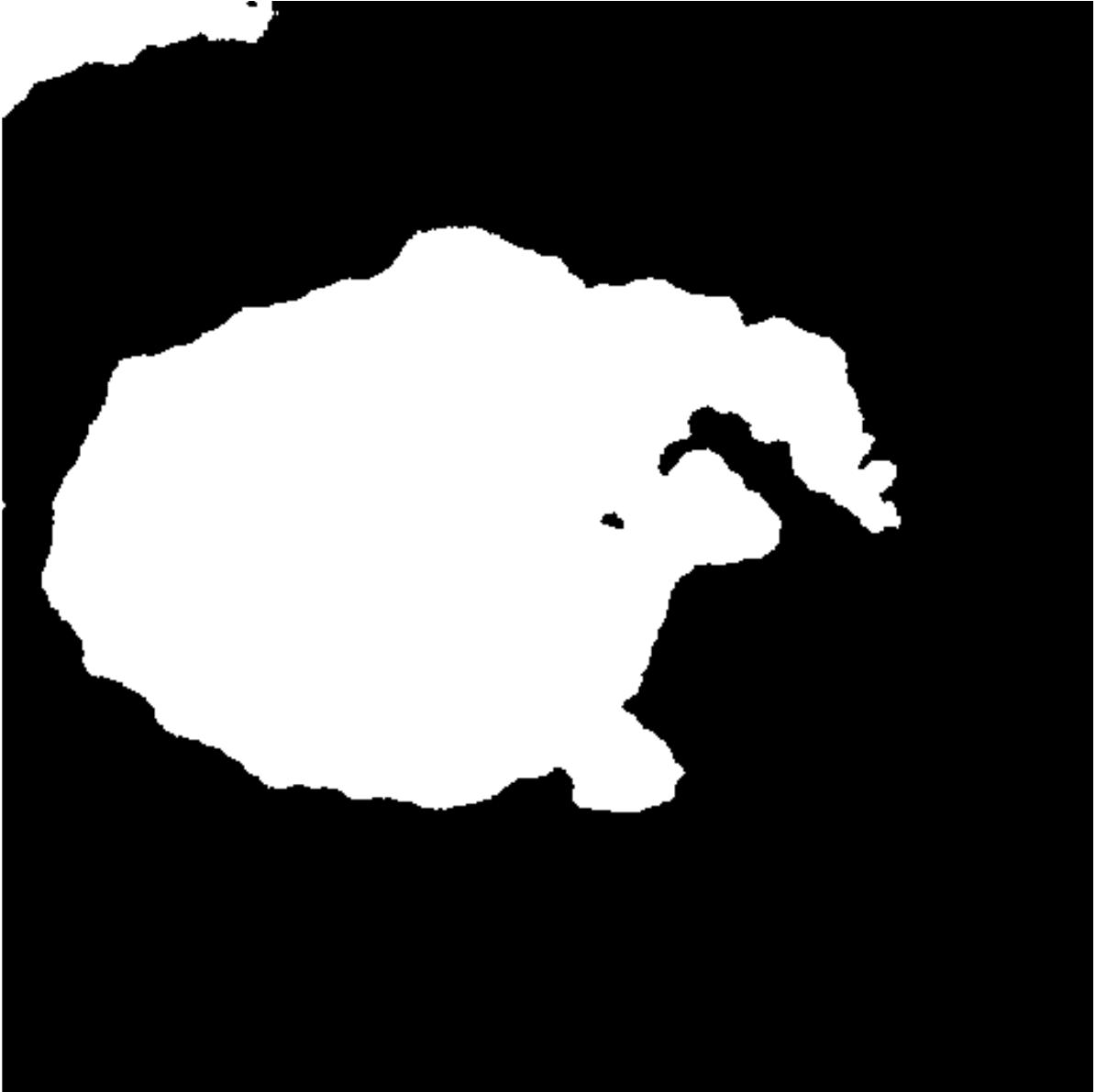} & \includegraphics[width = 2.9cm]{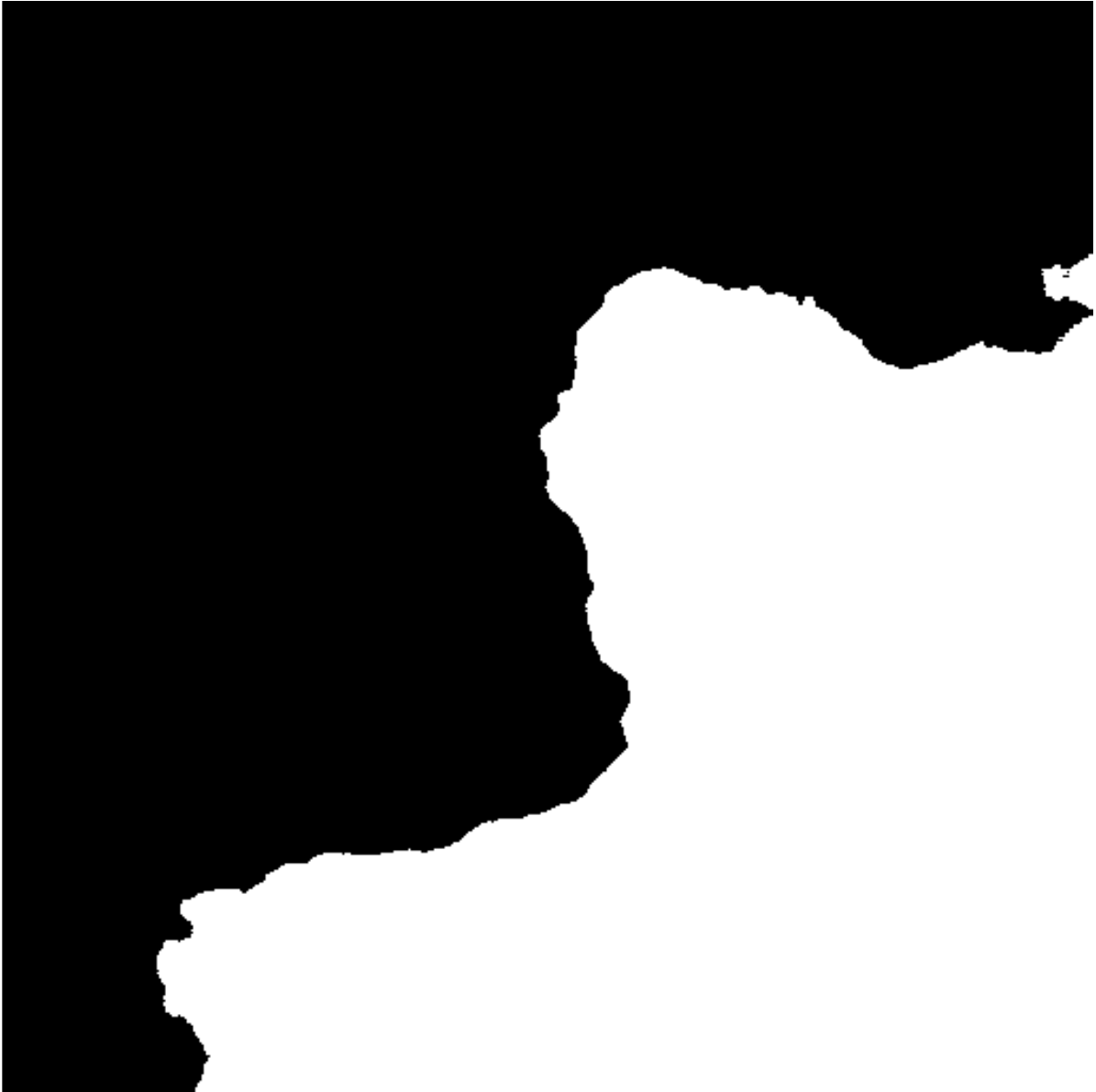}\\
$82.6\%$ & $86.1\%$ & $87.9\%$ & $55.2 \%$ \\
(a) T-ROF & (b) Joint & (c) Coupled & (d) LSHC-MF \\
\end{tabular}
\caption{\label{fig:res_umd} \textbf{Segmentation performance on real-world textures.}
For the piecewise homogeneous texture shown in Fig.~\ref{fig:umd_textures}(d), performance are computed in term of the percentage of well-classified pixels for T-ROF (a),  \textit{Joint} (b),  \textit{Coupled} (c) and LSHC-MF (d). 
}
\end{figure}

\section{Conclusion and future work}
\label{sec:conc}
The present article has significantly advanced the state-of-the-art in the segmentation of piecewise fractal textures. 

First, it has been proposed to base the segmentation of fractal textures not only on the estimation of the sole local regularity parameter, but to use an additional local parameter, the log-wavelet variance, tightly related to the local variance of the textures. 
Two variations were investigated, \emph{coupled} and \emph{joint}, that respectively enforce or not co-localized changes in regularity and variance.
It has been shown, using large size Monte Carlo simulations, that the use of this additional features improves drastically segmentation performance when the difference in regularity becomes negligible. 
This yet comes at the price of a non negligible increase in computational costs. \\
\indent Therefore, a second contribution has been to construct accelerated primal-dual algorithms, requiring the explicit calculation of the strong convexity constant underlying the data fidelity term form. 
The achieved substantial reduction in computational costs has turned critical both to be able to conduct large size Monte Carlo simulation and to perform the greedy search of the optimal set of hyper--parameters. 
This low computational cost is also crucial for application on real-world data. \\
\indent The investigations reported here have permitted to show that accelerated primal-dual algorithms outperform accelerated dual forward-backward (FISTA-type) algorithms for piecewise fractal texture segmentation as soon as the joint use of regularity and variance is required. 
Further, they showed that the \emph{coupled} formulation, that favor co-localized changes in regularity and variance, performs better than the \emph{joint} formulation, yet at the price of a significantly larger computational cost. 
Thus, depending on budget constraints on time and requested quality of the solution, the \emph{joint} formulation can be regarded as an effective trade-off. \\
\indent  The proposed theoretical formulations for piecewise fractal texture segmentation and the corresponding accelerated algorithms are matured enough for applications on real-world data and competitive with respect to state-of-the-art as shown in Sec.~\ref{sec:random_contours}. 
Application to the segmentation of multiphasic flows is under current investigations.
The automation of the tuning of the hyperparameters, which is a crucial point for real-world applications, as underlined in Sec.~\ref{sec:random_contours},  is also being investigated. 
Extensions to piecewise multifractal textures are further targeted. \\
\indent A {\sc matlab} toolbox implementing the analysis and synthesis procedures devised here will be made freely and publicly available at the time of publication.

\bibliographystyle{plain}
\bibliography{abbr,pami}

\end{document}